\newtheorem{theorem}{Theorem}[section]
\newtheorem{proposition}[theorem]{Proposition}
\newtheorem{definition}[theorem]{Definition}
\newtheorem{lemma}[theorem]{Lemma}
\newtheorem{remark}[theorem]{Remark}
\newtheorem{example}[theorem]{Example}
\newtheorem{assumption}[theorem]{Assumption}
\def\proof{{\sl Proof. }}
\def\proofof#1{{\sl Proof of #1.}}
\def\cl#1{{\mathscr #1}}
\def\P{{\mathbb P}}
\def\R{{\mathbb R}}
\def\N{{\mathbb  N}}
\def\E{{\mathbb E}}
\def\L{{\mathbb L}}
\def\Var{{\rm Var }}
\def\Cov{{\rm Cov }}
\def\<{\langle}
\def\>{\rangle}
\newcommand{\cvd}{
                   $\quad\Box $
                   \medskip
                   }
\def\appendix{\par
  \setcounter{chapter}{0}
  \def\@chapter{Appendix}
  \def\thechapter{\Alph{chapter}}}
\font\tengoth=eufm10 scaled 1200 \font\sevengoth=eufm7 scaled 1200
\font\fivegoth=eufm5 scaled 1200
\def\ep{\varepsilon}
\begin{document}
\title{
\bf
Pathwise asymptotics for Volterra type stochastic volatility models}
\author{
   { {\sc Miriana Cellupica} } \and {\sc Barbara Pacchiarotti}\thanks{Dept. of Mathematics, University of Rome ``Tor Vergata'', E-mail address: \texttt{pacchiar@mat.uniroma2.it}} }

\date{}
\maketitle

{\small
\bf Abstract. \rm We study  stochastic volatility models in which the volatility process
is a positive continuous function of a continuous  Volterra stochastic process.
We state some pathwise
 large deviation  principles for the scaled log-price.
}

\bigskip

{\small
\bf Keywords: \rm large deviations, Volterra type Gaussian processes, conditional processes.
}

\bigskip

{\small
\bf 2000 MSC: \rm 60F10, 60G15, 60G22.
}

\bigskip

{\small
\bf Corresponding Author\rm: Barbara Pacchiarotti,
Dipartimento di Matematica,
Universit\`a di Roma \sl Tor Vergata\rm, Via della Ricerca
Scientifica, I-00133 Roma, Italy. E-mail address: \texttt{pacchiar@mat.uniroma2.it}
}

\section{Introduction}\label{sect:intro}
The last few years have seen renewed interest in stochastic volatility
models driven by fractional Brownian motion or other self-similar Gaussian processes
(see \cite{GJR}, \cite{GVZ}, \cite{GVZ2}), i.e. fractional stochastic volatility models.
Here we study  stochastic volatility models in which the volatility process
is a positive continuous function $\sigma$ of a continuous  stochastic process $\hat B$, that we assume to be  a Volterra type Gaussian process. The
main result obtained in the present paper
is  a generalization, to the infinite-dimensional case, of a large deviation  principle
for the log-price process, due (in the real case) to Forde and Zhang \cite{For-Zha} and Gulisashvili \cite{Gu1}.
Our result is similar to that obtained in Section 2 in  a recent paper of Gulisashvili \cite{Gu2} where also moderate deviations are considered.

An important aspect of this paper is that the techniques  used here are  different from those  generally used in this framework (Freidlin-Wentzell theory).
The principal result we use is Chaganty Theorem (see Theorem 2.3 in \cite{Cha}), where a large deviation  principle for joint and marginal  distributions is stated.
In this way  the same results contained in \cite{For-Zha}, \cite{Gu1} and \cite{Gu2} can  be obtained in a more general context, see Section \ref{sect:VolterraGen}.

In
the stochastic volatility models of interest, the dynamic of the asset price process $ (S_t)_{t \in [0,T]} $ is modeled by the following equation

$$
\begin{cases}
dS_t = S_t\, \mu(\hat{B}_t)dt + S_t\,\sigma(\hat{B}_t)d(\bar{\rho}W_t+\rho B_t), \qquad 0\leq t \leq T,\\
S_0=s_0 >0,
\end{cases}
$$
where  $s_0$ is the initial price, $ T > 0 $ is the time horizon and
$ \hat{B} $ is a non-degenerate continuous Volterra type Gaussian process of the form
$$ \hat{B}_t= \displaystyle\int_0^t K(t,s)\, dB_s, \quad 0\leq t \leq T, $$
for some kernel $ K$,
 the processes $ W $ and $ B $  are two independent standard Brownian motions, $ \rho \in (-1,1) $ is the correlation coefficient and $ \bar{\rho}=\sqrt{1-\rho^2}.$ Remark that $ \bar{\rho}W+\rho B $ is another standard Brownian motion which has correlation coefficient $ \rho $ with $ B $. If $ \rho \neq 0 $ the model is called a correlated stochastic volatility model, otherwise it is called an uncorrelated model.
It is assumed that $ \mu: \mathbb{R} \to \mathbb{R} $ and $ \sigma: \mathbb{R} \to (0, +\infty) $ are continuous functions satisfying suitable hypotheses. The process $ \sigma(\hat{B}) = (\sigma(\hat{B}_t))_{0\leq t \leq T } $ describes the stochastic evolution of the volatility in the model and $ \mu(\hat{B})=(\mu(\hat{B}_t))_{0\leq t \leq T} $ is an adapted return process. Note that the model here considered contains a drift term which is not present in \cite{Gu2}.

The unique solution to the previous equation  is the Dol\'{e}ans-Dade expression
$$
S_t=s_0 \exp \Big\{\displaystyle\int_0^t \mu(\hat{B}_s)\, ds-\frac 12 \int_0^t \sigma(\hat{B}_s)^2 \, ds +\bar{\rho} \int_0^t \sigma(\hat{B}_s)\, dW_s + \rho \int_0^t \sigma(\hat{B}_s)\, dB_s  \Big\}
$$
for $  0\leq t \leq T. $
Therefore, the log-price process $ Z_t= \log S_t$, $ 0\leq t \leq T, $ with $ Z_0=x_0=\log s_0 $ is defined by,
$$
Z_t=x_0+ \int_0^t\mu(\hat{B}_s)\,ds-\frac 12\int_0^t\sigma(\hat{B}_s)^2 ds +  \bar{\rho}\int_0^t\sigma(\hat{B}_s) dW_s + \rho\int_0^t \sigma(\hat{B}_s) dB_s.
$$

Now let $ \varepsilon_.: \mathbb{N} \to \mathbb{R}_+ $ be an infinitesimal function, i.e.    $ \varepsilon_n \to 0 $, as $ n \to +\infty$ .  For every $ n \in \mathbb{N} $, we  consider the following scaled version of the stochastic differential equation
$$\begin{array}{c}
\begin{cases}
dS_t^{n} =S_t^{n}\mu(\hat{B}_t^n)dt+ \varepsilon_n  S_t^{n}\sigma(\hat{B}_t^n)d(\bar{\rho}W_t+\rho B_t), \qquad 0\leq t \leq T,\\
S_0^{n}=s_0,
\end{cases}
\end{array}$$
Here  the Brownian motion $ \bar{\rho}W+\rho B $ is multiplied by a \it  small-noise \rm parameter $ \varepsilon_n$
and also the Volterra process $\hat B$  is  multiplied by a \it  small-noise \rm parameter, i.e.
$$\hat{B}^n_t=\varepsilon_n\hat{B}_t, \quad t \in [0,T].
$$

The  log-price process $ Z_t^{n}=\log S_t^{n}, $ $ 0\leq t \leq T,$
in the scaled model is
$$
Z_t^{n}=x_0+\displaystyle\int_0^t\mu(\hat{B}_s^n)\,ds-\frac 12\varepsilon_n^{2}\int_0^t\sigma(\hat{B}_s^n)^2 ds +  \varepsilon_n\,\bar{\rho}\int_0^t\sigma(\hat{B}_s^n) dW_s +  \varepsilon_n\,\rho\int_0^t\sigma(\hat{B}_s^n) dB_s.
$$
We will obtain a  sample path  large deviation  principle (which is called a small-noise large deviation  principle) for the family  of  processes $ ((Z_t^{n}-x_0)_{t \in [0,T]})_{n \in \mathbb{N}}.$
A large deviation  principle  for $ (Z_T^{n}-x_0)_{n \in \mathbb{N}} $ can be  obtained with the same techniques. But  it can be also obtained
by contraction and this is the approach we follow here.
The paper is organized as follows.

In Section \ref{sect:ldp} we recall some basic facts about large deviations for continuous Gaussian processes (also for Gaussian diffusions) and we give some examples of Volterra type stochastic volatility models to which the large deviation  principles obtained here could be applied. In particular we discuss fractional models.
In Section \ref{sect:Chaganty} we recall some basic facts about large deviations   for joint and marginal distributions.
In Sections \ref{sect:uncorrelatedSVM}, \ref{sect:correlatedSVM} and \ref{sect:VolterraGen} are contained the main results.
More precisely in Section \ref{sect:uncorrelatedSVM} we prove a large deviation  principle for the log-price process in  the uncorrelated model under mild  hypotheses on the coefficients $\mu$ and $\sigma$.
We assume only $\mu$ continuous,  $\sigma$ continuous and positive.
In \cite{Gu2} the uncorrelated model   is a particular case of the correlated one and it is obtained under the further hypothesis that $\sigma$ is locally $\omega$-continuous (see Section \ref{sect:correlatedSVM} for the exact definition).
In Section \ref{sect:correlatedSVM}, we
first prove a large deviation  principle for a certain family
$ (Z^{n,m}-x_0)_{n \in \mathbb{N}} $  with a certain good rate function $ I^m$ (Section \ref{sect:appr-fam}). Then, showing that the family $ ((Z^{n,m})_{n \in \mathbb{N}})_{m\in \N} $ is an exponentially good approximation of $ (Z^{n})_{n \in \mathbb{N}} $, we  prove a large deviation  principle for  $ (Z^{n}-x_0)_{n \in \mathbb{N}} $ with the good rate function obtained in terms of the $ I^m $'s (Section \ref{sect:correlatedSVM-notid}). To prove this large deviation
principle we have the same hypotheses on $\sigma$ as in  \cite{Gu2}.
Finally, in Section \ref{sect:id-rate}, we give an explicit expression for the rate function (not in terms of the $ I^m $'s).
 In Section \ref{sect:id-rate}, for the identification of the rate function,  we have a more restrictive hypothesis on $\sigma$. Here we need a   power growth not required in \cite{Gu2}.
In Section \ref{sect:cross-prob} we give an application to the asymptotic estimate of the crossing probability.
In  Section \ref{sect:VolterraGen} we extend the  results of  Sections
\ref{sect:uncorrelatedSVM} and \ref{sect:correlatedSVM} to a    more general context.
We get the  same results   for    more general families of Volterra processes $((\hat{B}^n_t)_{t \in [0,T]})_{n\in \N}$ (not only  $((\varepsilon_n\hat{B}_t)_{t \in [0,T]})_{n\in \N}$) that obey
a large deviation  principle (some examples of such processes can be found in \cite{Car-Pac-Sal}, \cite{Gio-Pac} and \cite{Pac}). For example we can consider  $((\hat B^n_{t \in [0,T]})_{n\in \N}) =((\hat B_{\ep_n t})_{t \in [0,T]})_{n\in \N}$(Example \ref{ex:not-ss}). In this case we have a   small-time
large deviation  principle for the Volterra processes.
If the Volterra process is self-similar we can pass  from small-noise to small-time regime (see the discussion at the end of Section 3 in \cite{Gu2}), while, if the process is not self-similar, it is not generally true.

\section{Large deviations for continuous Gaussian processes} \label{sect:ldp}

We briefly recall some main facts on large deviation  principles and Volterra processes   we are going to use.
For a detailed development of this very wide theory we can refer, for example, to the following classical references:
 Chapitre II in Azencott \cite{Aze},
Section 3.4 in Deuschel and Strook \cite{Deu-Str}, Chapter 4 (in particular Sections 4.1, 4.2  and 4.5) in  Dembo and Zeitouni
\cite{Dem-Zei}, for large deviation  principles;   \cite{De-Ust} and  \cite{Hult} for
Volterra processes.

\subsection{Large deviations}

\begin{definition}
Let $E$ be a topological space,   $\cl{B}(E)$ the Borel $\sigma$-algebra and  $(\mu_n)_{n\in \N}$ a family of probability measures on $\cl{B}(E)$; let $\gamma \, : \N \rightarrow \mathbb{R}^+ $ be a speed function, i.e. $\gamma_n \rightarrow +\infty$ as $n\to +\infty$. We say that the family of probability measures $(\mu_n)_{n\in \N}$ satisfies a large deviation  principle (LDP) on $E$ with the rate function $I$ and the speed $\gamma_n$ if, for any open set
$\Theta$,
$$
-\inf_{x \in {\Theta} } I(x) \le \liminf_{n\to+\infty} \frac 1{\gamma_n} \log \mu_n (\Theta)
$$ and for any closed set $\Gamma$
\begin{equation} \label{eq:upperbound} \limsup_{n\to +\infty}\frac 1 {\gamma_n} \log \mu_n (\Gamma) \le -\inf_{x \in {\Gamma}} I(x).
\end{equation}
\end{definition}
A rate function is a lower semicontinuous mapping $I:E\rightarrow [0,+\infty]$. A rate function $I$ is said \textit{good} if  $\{I\le a\}$ is a compact set for every $a \ge 0$.
\begin{definition} \label{def:wldp}
Let $E$ be a topological space,   $\cl{B}(E)$ the Borel $\sigma$-algebra and  $(\mu_n)_{n\in \N}$ a family of probability measures on $\cl{B}(E)$; let $\gamma \, : \N \rightarrow \mathbb{R}^+ $ be a speed function. We say that the family of probability measures $(\mu_n)_{n\in \N}$ satisfies a weak large deviation  principle (WLDP) on $E$ with the rate function $I$ and the speed $\gamma_n$ if the upper bound (\ref{eq:upperbound}) holds for compact sets.
\end{definition}

Let $U=(U_t)_{t\in[0,T]}$ be a continuous, centered, Gaussian process on a probability space
$(\Omega,\cl F, \P)$. From now on, we will denote by  $C [0, T]$ (respectively $ C_0[0,T]$) the set of
continuous functions on $[0, T]$ (respectively the set of
continuous functions on $[0, T]$ starting from 0) endowed with the topology induced by the sup-norm.
Moreover,
we will denote by $\cl M[0, T]$ its dual, that is, the set of signed Borel measures on $[0, T]$.
The action of $\cl M[0, T]$ on $C [0, T]$ is given by
$$\langle \lambda,h \rangle=\int_0^T h(t) d\lambda(t), \quad \lambda\in\cl M[0, T], \,h\in C [0, T].$$

\begin{remark}\rm
We say that a family of continuous processes $((U_t^n)_{t \in [0,T]})_{n\in \N}$, $U^n_0=0$ satisfies a LDP if  the family of their laws satisfies a LDP on $ C_0[0,T]$.
\end{remark}

The following remarkable theorem (Proposition 1.5 in \cite{Aze})  gives an explicit expression of the Cram\'er transform $\Lambda^*$ of a continuous centered Gaussian process $(U_t)_{t \in [0,T]}$   with covariance function $k$. Let us recall that
$$\Lambda(\lambda )=\log \E[\exp(\langle U, \lambda\rangle )]=\frac{1}{2} \int_0^T \int_0^T k(t,s) \, d\lambda(t) d\lambda(s)$$ for $\lambda\in \cl M[0,T]$.

\begin{theorem} \label{th:cramer-transform}
Let $(U_t)_{t \in [0,T]}$ be a continuous, centered Gaussian process, with covariance function $k$. Let $\Lambda^*$ denote the Cram\'er transform of  $\Lambda$, that is
$$
\Lambda^*(x) = \sup_{\lambda \in \cl M[0,T]} \left( \langle \lambda, x \rangle - \Lambda(\lambda) \right)
= \sup_{\lambda \in \cl M[0,T]} \left( \langle \lambda, x \rangle -  \frac{1}{2} \int_0^T \int_0^T k(t,s) \, d\lambda(t) d\lambda(s) \right).
$$
Then,
$$
\Lambda^*(x) = \begin{cases} \frac{1}{2} \|x \|_\cl{H}^2 & x \in \cl{H} \\
					      +\infty					    & x \notin \cl{H}.
			\end{cases}
$$
where $\cl{H}$ and $\| . \|_\cl{H}$ denote, respectively, the reproducing kernel Hilbert space and the related norm associated to the covariance function $k$.
\end{theorem}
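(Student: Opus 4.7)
The plan is to identify $\Lambda$ explicitly as a squared Hilbert norm on the RKHS and then dualize. First I would introduce, for each $\lambda\in\cl M[0,T]$, the function $k_\lambda(\cdot) = \int_0^T k(\cdot,s)\,d\lambda(s)$. Since $k$ is the covariance of a continuous Gaussian process, $k_\lambda$ belongs to $\cl H$, and standard facts about the RKHS yield the identities
\[
\langle k_\lambda, k_\mu\rangle_{\cl H} = \int_0^T\!\!\int_0^T k(t,s)\,d\lambda(t)\,d\mu(s), \qquad \langle x, k_\lambda\rangle_{\cl H} = \langle \lambda, x\rangle \ \text{ for every } x\in \cl H,
\]
the second being the reproducing property extended from Dirac masses to general $\lambda$ by linearity and continuity. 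In particular $\Lambda(\lambda) = \tfrac12\|k_\lambda\|_{\cl H}^2$.

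For $x\in\cl H$, I would then rewrite the variational expression as $\langle \lambda, x\rangle - \Lambda(\lambda) = \langle x, k_\lambda\rangle_{\cl H} - \tfrac12\|k_\lambda\|_{\cl H}^2$, and complete the square to get $\tfrac12\|x\|_{\cl H}^2 - \tfrac12\|x-k_\lambda\|_{\cl H}^2 \le \tfrac12\|x\|_{\cl H}^2$. Equality along a sequence follows from the classical density of the linear span of $\{k(\cdot,t):t\in[0,T]\}$ in $\cl H$, because this span coincides with $\{k_\lambda:\lambda \text{ is a finite sum of Dirac masses}\}$; choosing $\lambda_n$ with $k_{\lambda_n}\to x$ in $\cl H$ gives $\langle \lambda_n,x\rangle - \Lambda(\lambda_n)\to \tfrac12\|x\|_{\cl H}^2$.

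The delicate case is $x\notin \cl H$, where I want to show the supremum is $+\infty$, and I would argue by contradiction. Suppose the supremum equals some finite $C$; then applying $\langle \lambda,x\rangle - \tfrac12\|k_\lambda\|_{\cl H}^2 \le C$ to $t\lambda$ for arbitrary real $t$ and optimising in $t$ yields $|\langle \lambda,x\rangle|^2 \le 2C\,\|k_\lambda\|_{\cl H}^2$. Consequently $k_\lambda \mapsto \langle \lambda, x\rangle$ is well defined and bounded on the dense subspace $\{k_\lambda\}\subset \cl H$, so by Riesz representation there exists $y\in\cl H$ with $\langle k_\lambda, y\rangle_{\cl H} = \langle \lambda, x\rangle$ for every $\lambda$. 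Specialising to $\lambda=\delta_t$ and using the reproducing property forces $y(t)=x(t)$, so $x=y\in\cl H$, a contradiction.

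The main obstacle is this second direction: one needs to know enough about $\cl H$ (density of the span of the covariance sections, and the identification of Riesz representatives through the reproducing property) to convert an \emph{a priori} finite supremum into an $\cl H$-membership statement. The continuity of $k$ on the compact $[0,T]^2$ is what makes this RKHS machinery applicable and makes the map $\lambda\mapsto k_\lambda$ well behaved.
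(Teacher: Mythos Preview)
The paper does not prove this theorem at all: it is quoted as Proposition~1.5 in Azencott~\cite{Aze} and used as a black box, so there is no ``paper's own proof'' to compare against. Your argument is a correct and self-contained RKHS proof. The only point worth tightening is the well-definedness of the linear map $k_\lambda \mapsto \langle \lambda, x\rangle$ in the case $x\notin\cl H$: you should make explicit that if $k_\lambda=k_{\lambda'}$ in $\cl H$ then $k_{\lambda-\lambda'}=0$, and your scaling bound applied to $\lambda-\lambda'$ forces $\langle \lambda-\lambda', x\rangle=0$, so the map is indeed well defined before one invokes Riesz. With that detail spelled out, the proof is complete.
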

Reproducing kernel Hilbert spaces (RKHS) are an important tool to handle Gaussian processes. For a detailed development of this wide
theory we can refer, for example, to  Chapter 4 in \cite{Hid-Hit} (in particular Section 4.3) and to  Chapter 2  (in particular Sections 2.2 and 2.3) in \cite{Ber-Tho}.
In order to state a large deviation principle for a family of Gaussian processes, we need the following definition.
\begin{definition}
A family of continuous processes ${((X^n_t)_{t \in [0,T]}})_{n\in\N}$ is exponentially tight at  the speed $\gamma_n$ if, for every $R>0$ there exists a compact set $K_R$ such that
$$
\limsup_{n\to +\infty}{\gamma_n^{-1}} \log \P (X^n \notin K_R) \le -R.
$$
\end{definition}

If the means and the covariance functions of an exponentially tight family of Gaussian processes have a good limit behavior, then the family satisfies a large deviation principle, as stated in the following theorem
which is a consequence of the classic abstract G$\ddot{\rm a}$rtner-Ellis Theorem (Baldi Theorem 4.5.20 and Corollary 4.6.14 in \cite{Dem-Zei}) and Theorem \ref{th:cramer-transform}.
\begin{theorem}\label{th:ldp-gaussian}
Let $((X_t^n)_{t \in [0,T]})_{n\in\N}$ be an exponentially tight family of continuous Gaussian processes with respect to the speed function $\gamma_n$. Suppose that, for any $\lambda \in \cl M[0,T]$,
$$
\lim_{n\to +\infty} \E\left[ \langle \lambda , X^n \rangle \right] = 0
$$
and the limit
\begin{equation}\label{eq:covlimit}
\Lambda(\lambda) = \lim_{n\to +\infty} \gamma_n\Var\left( \langle \lambda , X^n \rangle \right) = \int_0^T \int_0^T {k}(t,s) \, d\lambda(t) d\lambda(s)
\end{equation}
exists, for some continuous, symmetric, positive definite function ${k}$, that is  the covariance function of a continuous Gaussian process,
then $((X_t^n)_{t \in [0,T]})_{n\in\N}$
 satisfies a large deviation principle on $C[0,T]$, with  the speed $\gamma_n$ and the good rate function
$$
I(h) = \begin{cases} \frac{1}{2} \left \| h \right \|^2_{{\cl  H }} & h \in{\cl  H }\\ +\infty & x \notin \cl{H}, \end{cases}$$
where ${{\cl H}}$ and $\left \| . \right \|_{{\cl  H }}$ denote,  respectively, the reproducing kernel Hilbert space and the related norm associated to the covariance function ${k}$.
\end{theorem}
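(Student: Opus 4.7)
The plan is to fit this statement directly into the abstract G\"artner--Ellis/Baldi framework (Theorem 4.5.20 and Corollary 4.6.14 in \cite{Dem-Zei}) and then appeal to Theorem \ref{th:cramer-transform} to put the resulting rate function in the explicit RKHS form. Since exponential tightness is already assumed, the only real work is to verify convergence of the normalized logarithmic moment generating function along linear functionals and to compute the ensuing Legendre transform.

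Fix $\lambda\in\cl M[0,T]$. Because $X^n$ is a Gaussian process, the scalar $\langle\lambda,X^n\rangle$ is real Gaussian with mean $m_n:=\E[\langle\lambda,X^n\rangle]$ and variance $V_n:=\Var(\langle\lambda,X^n\rangle)$, hence
$$
\frac{1}{\gamma_n}\log\E\bigl[\exp(\gamma_n\langle\lambda,X^n\rangle)\bigr] = m_n+\frac{1}{2}\gamma_n V_n.
$$
By the two hypotheses $m_n\to 0$ and $\gamma_n V_n\to\int_0^T\int_0^T k(t,s)\,d\lambda(t)d\lambda(s)$, so the limit equals
$$
\widetilde\Lambda(\lambda):=\frac{1}{2}\int_0^T\int_0^T k(t,s)\,d\lambda(t)d\lambda(s),
$$
which is precisely the log-Laplace functional appearing in Theorem \ref{th:cramer-transform} for the centered Gaussian process with covariance $k$.

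Next I would apply Baldi's theorem. Exponential tightness upgrades the weak upper bound (which holds on compacts thanks to the existence of $\widetilde\Lambda$) to the full upper bound on closed subsets of $C[0,T]$. The lower bound requires that every point in the effective domain of $\widetilde\Lambda^*$ be an exposed point; this follows from the fact that $\widetilde\Lambda$ is quadratic, hence G\^ateaux differentiable at every $\lambda\in\cl M[0,T]$, combined with the density of the image of this differential in the RKHS $\cl H$. This yields an LDP with speed $\gamma_n$ and rate function $\widetilde\Lambda^*$. Theorem \ref{th:cramer-transform} then identifies $\widetilde\Lambda^*$ with $\frac{1}{2}\|h\|_\cl{H}^2$ on $\cl H$ and $+\infty$ otherwise, which is exactly $I$; goodness is automatic from exponential tightness and lower semicontinuity.

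The step I expect to be the main obstacle is the verification of the exposed-point condition in Baldi's lower bound. Given $h\in\cl H$, one must exhibit $\lambda_h\in\cl M[0,T]$ pairing optimally with $h$ in the Legendre duality, i.e.\ with $\int_0^T k(\cdot,s)\,d\lambda_h(s)=h(\cdot)$. Such a $\lambda_h$ is naturally a finite combination of Dirac masses when $h$ lies in the linear span of the kernel sections $t\mapsto k(t,s)$, $s\in[0,T]$; general $h\in\cl H$ are then approximated in the RKHS norm by such elements, and the approximation must be transferred to the level of exposed points by a density/continuity argument along the lines carried out in \cite{Dem-Zei} and \cite{Aze}.
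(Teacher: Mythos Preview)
Your approach is correct and is exactly the route the paper indicates: the paper does not give a detailed proof but simply states that the theorem ``is a consequence of the classic abstract G\"artner--Ellis Theorem (Baldi Theorem 4.5.20 and Corollary 4.6.14 in \cite{Dem-Zei}) and Theorem \ref{th:cramer-transform}.'' Your write-up fills in precisely these steps---the Gaussian log-mgf computation, the appeal to Baldi's theorem under exponential tightness, and the identification of $\widetilde\Lambda^*$ via Theorem \ref{th:cramer-transform}---so there is nothing to add or correct.
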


\begin{remark}\label{exp-equiv}
\rm Suppose $ ((U_t^n)_{t \in [0,T]})_{n\in \N} $ is a family of centered Gaussian processes  that satisfies a large deviation principle on $C[0,T]$ with the speed $\gamma_n $ and the  good rate function $I $. Let $(m^n)_{n\in \N}\subset C[0,T]$,   $m\in C[0,T] $ be functions such that
	$ m^n\overset{C[0,T]}{\underset{}{\longrightarrow}} {m}, $  as $ n \to +\infty. $
	Then, the family of processes $ (X^n)_{n\in \N}$, where $X^n=m^n +U^n$, satisfies a large deviation principle on $C[0,T] $ with the same  speed $\gamma_n $ and  the good rate function
	$$I_X(h) =I(h-m)= \begin{cases} \frac{1}{2} \left \|h-m \right \|^2_{{\cl  H }} & h- m \in{\cl  H }\\ +\infty & h- m \notin{\cl H }. \end{cases}$$
\end{remark}
 A useful result which can help in investigating the exponential tightness of a family of continuous  Gaussian processes is Proposition 2.1 in \cite{Mac-Pac} where   the required property follows from H\"older continuity of the mean and the covariance function.

\subsection{Volterra type Gaussian processes}

Let $ (\Omega,\cl{F},\mathbb{P}) $ be a probability space and $ B=(B_t)_{t \in [0,T]} $ a standard Brownian motion.
Suppose $ \hat{B}=(\hat{B}_t)_{t \in [0,T]} $ is a centered Gaussian process having the following Fredholm representation,
\begin{eqnarray}\label{eq:integral representation} \hat{B}_t= \int_0^T K(t,s)\, dB_s, \quad 0 \leq t \leq T,
\end{eqnarray}
where  $ T > 0 $ and  $ K $ is a  measurable square integrable kernel on $ [0,T]^2 $ such that
$$\sup_{t\in[0,T]} \int_0^T K(t,s)^2 \,ds < \infty. $$
For such a kernel, the linear operator $ \mathcal{K}:\L^2[0,T]\longmapsto \L^2[0,T] $ defined by
$$\mathcal{K}h(t)=  \int_0^T K(t,s)h(s)\, ds,$$
 is compact. The operator $ \mathcal{K} $ is called a Hilbert-Schmidt integral operator.
The modulus of continuity of the kernel $ K $  is defined as follows
$$
M(\delta)=  \sup_{\{t_1,t_2 \in [0,T]: |t_1-t_2|\leq \delta\}}  \int_0^T |K(t_1,s)-K(t_2,s)|^2\,ds, \quad 0 \leq \delta \leq T.
$$

  The covariance function of the process $ \hat{B} $ is given by
$$k(t,s)= \int_0^T K(t,u)K(s,u)\,du, \quad t,s \in [0,T].$$
Let us define a Volterra process.	
\begin{definition}\label{def:Volterra process}
	The process in (\ref{eq:integral representation}) is called a Volterra type Gaussian process if the following conditions hold for the kernel $ K $:
	\begin{enumerate}
		\item [\textit{(a)}] $K(0,s)=0$ for all $ 0\leq s \leq T, $ and $ K(t,s)=0 $ for all $ 0\leq t < s \leq T $;
		\item [\textit{(b)}] There exist constants $ c > 0 $ and $ \alpha > 0 $ such that $ M(\delta)\leq c\,\delta^\alpha $ for all $ \delta \in [0,T] $.
		\end{enumerate}
\end{definition}

\begin{remark}\rm
Condition \textit{(a)} is a typical Volterra type condition for the kernel $ K $ and the integral representation in (\ref{eq:integral representation}) becomes $\hat{B}_t= \int_0^t K(t,s)\, dB_s,$ for $ 0 \leq t \leq T. $ So $ \hat{B} $ is adapted to the natural filtration generated by $B$.
		Condition \textit{(b)} guarantees the existence of a H\"{o}lder continuous modification of the process $ \hat{B} $.
Note that other definitions for Volterra processes are allowed. For example,   Definition  5 in  \cite{Hult} also contains the following
condition
\begin{enumerate}
		\item [\textit{(c)}]    $ \mathcal{K} $ is injective as a transformation of functions in $ \L^2[0,T] $.
	\end{enumerate}
\end{remark}
Thanks to Condition \textit{(c)} an explicit expression for the RKHS holds, see the next Remark. We will not use condition \textit{(c)} in this paper.

\begin{remark}\label{rem:volterra kernel}\rm
		If $ (\hat B_t)_{t\in [0,T]} $ is a Volterra type Gaussian process with kernel $ K,$ satisfying condition \textit{(c)}, the reproducing kernel Hilbert space $ \mathscr{H}_{\hat B} $ can  be represented as the image of $ \L^2[0,T] $ under the integral transform $ \mathcal{K} $, i.e. $ \cl{H}_{\hat B}=\mathcal{K}(\L^2[0,T]) $, equipped with the inner product
		$$ \langle \varphi,\psi\rangle_{\cl{H}_{\hat B}}=\langle\mathcal{K}^{-1}\varphi, \mathcal{K}^{-1}\psi\rangle_{\L^2[0,T]}, \quad \varphi,\psi \in \cl{H}_{\hat B}, $$ (for further details, see e.g. Subsection 2.2 in \cite{Hult} and
\cite{Sot-Vii}).
		Any $ \varphi \in \mathscr{H}_{\hat B} $ can be represented as
		$$ \varphi(t)=\mathcal{K}{f}(t)= \int_0^tK(t,s){f}(s)\, ds, \quad t \in [0,T]$$
		where $ {f} $ belongs to $ \L^2[0,T].$ If condition \textit{(c)} is verified
		we have an identification between $ \varphi \in \mathscr{H}_{\hat B} $ and $ {f} \in \L^2[0,T]$ ($ \mathcal{K} $ is a bijection from $ \L^2[0,T] $ into $ \mathscr{H}_{\hat B} $).

\end{remark}

We now discuss some   Volterra processes which satisfy conditions \textit{(a)} and \textit{(b)} in  Definition \ref{def:Volterra process}.

 \textit{\textbf{Fractional Brownian motion}}. \rm The fractional Brownian motion $Z$ with Hurst parameter $H \in (0,1)$ is the centered Gaussian process with covariance function
\[
k_H(t,s)=\frac{1}{2}\left(t^{2H}+s^{2H}-|t-s|^{2H}\right).
\]
It is well-known that  fractional Brownian motion can be represented  as a  Volterra process with kernel
$$
\begin{array}{c} \label{eqn:kernel fbm}
K_H(t,s) = c_H \left[ \left( \frac{t}{s} \right) ^{H-1/2}(t-s)^{H-1/2} - \left( H-\frac{1}{2} \right) s^{1/2 - H} \int_s^t \!u^{H-3/2}(u-s)^{H-1/2} du \right],
\end{array}
$$
where
\[
c_H = \Big(  \frac{2H \, \Gamma(3/2-H)}{\Gamma(H+1/2) \, \Gamma(2-2H)}\Big)^{1/2}.
\]
\\
Notice that when $H=1/2$  the fractional Brownian motion reduces to the Wiener process.  Condition \textit{(b)} for this process, with $\alpha=\min\{2H,1\}$, was established in \cite{Zha} and Lemma 8 in \cite{Gu1}.

\textit{\textbf{Fractional Ornstein-Uhlenbeck process.}} \rm
For $H\in(0,1)$ and $a > 0$, the fractional Ornstein-Uhlenbeck process is given by
$$
U^H_t= \int_0^t e^{-a(t-u)} dB^H_u, \quad t\geq 0$$
where $B^H$ is a fractional Brownian motion and  the stochastic integral appearing above  can be defined using the integration
by parts formula and the stochastic Fubini theorem. This gives the following equality,
\begin{equation}\label{eq:FOU}
U^H_t=B^H_t - a\int_0^t e^{-a(t-u)}B^H_u du,\end{equation}
and therefore  the Volterra representation,
$$U^H_t=\int_0^t \tilde K_H(t,s) dB_s,$$
where, for $0\leq s \leq t \leq T$,
$$
\tilde K_H(t,s)= K_H(t,s)- a \int_s^t e^{-a(t-u)} K_H(t,u) du,$$
(see, e.g., Proposition A.1 in \cite{Ch-ka-Ma}). Condition \textit{(b)} for this process, with $\alpha=\min\{2H,1\}$, was established in Lemma 10 in \cite{Gu1}.
Note that this is not a self similar process, therefore large deviations for small-time cannot be deduced from large deviations for small-noise. See  Example \ref{ex:not-ss}.

\textit{\textbf{Riemann-Liouville fractional Brownian motion.}} \rm
For $H\in(0,1)$, the Riemann-Liouville fractional
Brownian motion is defined by
$$R^H_t= \frac 1{\Gamma(H+1/2)} \int_0^t (t-u)^{H-1/2} dB_s, \quad t\geq 0.$$
This process  is simpler than
fractional Brownian motion. However, the increments of the Riemann-Liouville fractional
Brownian motion lack the stationarity property. Condition \textit{(b)} for this process, with $\alpha=2H$, was established in Lemma 7 in \cite{Gu1}.

\textit{\textbf{$a$-th fold integrated Brownian motion.}}
 \rm For $a\in \N$, consider the Volterra process
$$
Z_t = \int_0^t \frac{(t-s)^a}{a!} \, dB_s, \quad t\geq 0.
$$

The covariance function is
\[
k(s,t) = \int_0^{s \land t} \frac{(t-u)^a (s-u)^a}{(a!)^2} \, du.
\]

This is the covariance function of the $a$-th fold integrated Brownian motion. For details, see \cite{Che-Li}. Notice that for this process the kernel is very similar to the kernel of  the Riemann-Liouville fractional Brownian motion. Condition \textit{(b)} for this process, with $\alpha=2a+1$, trivially holds.

\textit{\textbf{Conditioned Volterra processes.}}
For $T>0$ consider the centered Volterra process $\hat Z^T$
defined by
$$
\hat Z^T_t = \int_0^t K(T+t,T+u) \, dB_u, \quad t\geq 0.
$$
The
 covariance function is
 $$
\hat{k}^T(t,s)=\int_0^{s \land t} K(T+t,T+u)K(T+s,T+u) \, du.
$$
This process can be   obtained by conditioning a Volterra process with kernel $K$  to the past up to time $T$.
The new kernel is $\hat K^T(t,s)=K(T+t,T+s)$.
For major details see \cite{Gio-Pac}.
If the original kernel satisfies condition \textit{(b)} in $[0,2T]$ then  the new one satisfies condition \textit{(b)} in $[0,T]$
and therefore
the large deviation  principles obtained in this paper can be applied.

\medskip

Now we recall   a small noise large deviation principle for the couple $ (\ep_n B,\ep_n \hat{B})_{n \in \mathbb{N}} $ (see, for example, \cite{Gu1}).
First observe that $ (B,\hat{B}) $ is a Gaussian process (for details see, for example, \cite{For-Zha}) and therefore the following theorem is an application of Theorem 3.4.5 in \cite{Deu-Str}.
From now on we denote by $H_0^1[0,T] $   the Cameron-Martin space, i.e. the set  of absolutely continuous functions $ f $   such that $ f(0)=0 $ and $ \dot{f}\in \L^2[0,T]. $

\begin{remark}\label{rem:RKHS-couple}\rm It is known that  reproducing kernel Hilbert space of the couple $ (B,\hat{B}) $ is the Hilbert space
\begin{equation}\label{eq:H2}\cl H_{(B,\hat B)}=\{(f,g) \in C_0[0,T]^2: f \in H_0^1[0,T], \, g(t)=\int_0^tK(t,u)\dot{f}(u)\,du, \quad 0\leq t \leq T\}.\end{equation}
equipped with the norm
$$\lVert(f,g)\rVert_{\cl H_{(B,\hat B)}}=\frac12  \int_0^T \dot{f}(s)^2 \, ds,$$
see, for example,  the discussion in Section 6 in \cite{Gu1}.
\end{remark}

\begin{theorem}\label{th:LDP couple}
$ ((\ep_nB, \ep_n\hat{B}))_{n \in \mathbb{N}} $ satisfies a large deviation principle on $C_0[0,T]^2$ with the speed $ \ep_n^{-2} $ and the good rate function
	\begin{eqnarray}\label{eq:couple rate function}
		I_{(B,\hat{B})} (f,g)=
		 \begin{cases}
		\displaystyle \frac12  \int_0^T \dot{f}(s)^2 \, ds & (f,g) \in \cl H_{(B,\hat B)}\\
		\displaystyle +\infty & (f,g)\in C_0[0,T]^2\setminus\cl H_{(B,\hat B)}
		\end{cases}
		\end{eqnarray}
where $H_{(B,\hat B)}$ is defined in (\ref{eq:H2}).
\end{theorem}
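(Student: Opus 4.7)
The plan is to apply Theorem \ref{th:ldp-gaussian} on the product space $C_0[0,T]^2$ to the centered Gaussian family $((\varepsilon_n B,\varepsilon_n \hat{B}))_{n\in\N}$, and then read off the explicit form of the rate function from Theorem \ref{th:cramer-transform}, using the RKHS identification recalled in Remark \ref{rem:RKHS-couple}.

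First I would check that $(B,\hat{B})$ is a continuous centered Gaussian process with values in $C_0[0,T]^2$: both components are centered, joint Gaussianity follows from the fact that $\hat{B}$ is a linear transform of $B$, and continuity of $\hat{B}$ is ensured by condition \textit{(b)} in Definition \ref{def:Volterra process} on the kernel $K$, which provides a H\"older continuous modification. Consequently $(\varepsilon_n B,\varepsilon_n \hat{B})$ is centered Gaussian with covariance bilinear form equal to $\varepsilon_n^{2}$ times the one of $(B,\hat{B})$. Hence, for every $\lambda=(\lambda_1,\lambda_2)\in \cl M[0,T]^2$,
\[
\E\bigl[\langle\lambda,(\varepsilon_n B,\varepsilon_n \hat B)\rangle\bigr]=0,\qquad \varepsilon_n^{-2}\,\Var\bigl(\langle\lambda,(\varepsilon_n B,\varepsilon_n \hat B)\rangle\bigr)=\Var\bigl(\langle\lambda,(B,\hat B)\rangle\bigr),
\]
so the limit (\ref{eq:covlimit}) exists with $k$ equal to the covariance kernel of $(B,\hat{B})$, which is continuous, symmetric and positive definite.

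Next I would establish exponential tightness of $((\varepsilon_n B,\varepsilon_n \hat{B}))_{n\in\N}$ in $C_0[0,T]^2$ at speed $\varepsilon_n^{-2}$. Since the product of compacts is compact, it is enough to check it coordinate-wise. For $\varepsilon_n B$ this is Schilder's classical result. For $\varepsilon_n \hat{B}$ I would invoke the sufficient condition of Proposition 2.1 in \cite{Mac-Pac}: using condition \textit{(b)} one has
\[
k(t,t)+k(s,s)-2k(t,s)=\int_0^T\bigl(K(t,u)-K(s,u)\bigr)^{2}\,du\le M(|t-s|)\le c\,|t-s|^{\alpha},
\]
which gives the required H\"older regularity of the covariance (the mean is zero). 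Combined with exponential tightness of $\varepsilon_n B$, this yields exponential tightness of the couple.

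Having verified the hypotheses of Theorem \ref{th:ldp-gaussian}, I obtain a large deviation principle on $C_0[0,T]^2$ with speed $\varepsilon_n^{-2}$ and good rate function $\frac{1}{2}\|(f,g)\|^{2}_{\cl H_{(B,\hat B)}}$, with the RKHS-norm referred to the covariance of $(B,\hat{B})$. The last and, I expect, most delicate step is the explicit identification of this RKHS with the space (\ref{eq:H2}). The idea is standard: the RKHS of $B$ alone is the Cameron--Martin space $H_0^1[0,T]$ with norm $\bigl(\int_0^T\dot f^{2}\bigr)^{1/2}$, and the map $f\mapsto \bigl(t\mapsto \int_0^t K(t,s)\dot f(s)\,ds\bigr)$ is a bounded linear transformation from $H_0^1[0,T]$ into $C_0[0,T]$ whose stochastic counterpart sends $B$ to $\hat B$. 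Therefore the RKHS of the Gaussian couple coincides with the graph of this map, i.e.\ with $\cl H_{(B,\hat B)}$ in (\ref{eq:H2}), and the norm is inherited from the first coordinate, giving exactly (\ref{eq:couple rate function}). This concludes the proof.
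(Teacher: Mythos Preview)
Your argument is correct, but it differs from the paper's own justification. The paper does not go through Theorem \ref{th:ldp-gaussian} at all: it simply observes that $(B,\hat B)$ is a continuous centered Gaussian random element of $C_0[0,T]^2$ and invokes Theorem 3.4.5 in \cite{Deu-Str}, the abstract (generalized Schilder) large deviation principle for scaled centered Gaussian measures on a separable Banach space. That theorem delivers the LDP for $(\varepsilon_n B,\varepsilon_n\hat B)$ with rate function $\frac12\|\cdot\|_{\cl H_{(B,\hat B)}}^2$ in one stroke, with exponential tightness built in; only the RKHS identification of Remark \ref{rem:RKHS-couple} is then needed, exactly as in your last step.

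Your route via Theorem \ref{th:ldp-gaussian} is a legitimate alternative: it trades the single abstract citation for explicit checks (mean and covariance limits, exponential tightness), and requires the mild observation that Theorem \ref{th:ldp-gaussian}, stated for $C[0,T]$, extends verbatim to $C_0[0,T]^2$ since the underlying Baldi/G\"artner--Ellis machinery works in any separable Banach space with dual $\cl M[0,T]^2$. The coordinate-wise exponential tightness argument is sound (products of compacts are compact, and the union bound handles the complement). The payoff of the paper's approach is brevity; the payoff of yours is that it stays entirely within the toolkit already set up in Section \ref{sect:ldp}.
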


\begin{remark}\label{rem:LDP Volterra}\rm For $f\in\ H_0^1[0,T]$, define
\begin{equation}\label{eq:hat-f} \hat f(t)=\int_0^t K(t,u)\dot f (u) \, du \quad t\in[0,T].\end{equation}
Then, from Theorem \ref{th:LDP couple} and the contraction principle, the family $ (\ep_n \hat{B})_{n \in \mathbb{N}} $ satisfies a large deviation principle on $ C_0[0,T] $
with the speed $ \ep_n^{-2} $ and the good rate function
\begin{eqnarray}\label{eq:volterra rate function}
I_{\hat{B}}(g)=\inf\Big\{\frac12 \int_0^T \dot{f}(s)^2 \, ds  : \hat f=g, \,\, f\in H_0^1[0,T]\Big\},
\end{eqnarray}
with the understanding $ I_{\hat{B}}(g)= +\infty $ if the set
is empty.
\end{remark}

\subsection{Gaussian diffusion processes}
Let   $ X^n$ be the solution of the following stochastic differential equation
\begin{eqnarray}\label{eq:general W-F}
\begin{cases}
dX_t^n=b_n(t)dt + {\varepsilon_n}c_n(t)dW_t \quad 0\leq t \leq T\\
 X_0^n=x \in \mathbb{R}.
\end{cases}
\end{eqnarray}
This is a Gaussian diffusion process. As a simple application of Theorem 3.1 in \cite{Chi-Fis} we have the following result for Gaussian diffusion processes.

\begin{theorem}\label{th:general W-F}
Suppose that $b_n\to b$ and $c_n\to c$ in $C[0,T]$ then the family $ (X^n)_{n\in \N} $ of solutions to the SDE (\ref{eq:general W-F}) satisfies a LDP with the  speed
$\ep_n^{-2}$ and the good rate function
\begin{eqnarray}\label{eq:general rate function W-F}
I(f)=  \inf \Big\{ \frac 12\int_0^T \dot{g}(t)^2\, dt: \, x+\int_0^t b(s)\,ds + \int_0^t c(s )\dot{g}(s)\, ds=f(t), \, g \in H_0^1[0,T]\Big\}
\end{eqnarray}
with the understanding $ I(f)= +\infty $ if the set
is empty.	
\end{theorem}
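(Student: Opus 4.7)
The SDE has an explicit solution
\[
X_t^n = m_n(t) + U_t^n, \qquad m_n(t) = x + \int_0^t b_n(s)\,ds, \qquad U_t^n = \varepsilon_n\int_0^t c_n(s)\,dW_s,
\]
so $U^n$ is a continuous centered Gaussian process and $m_n$ is deterministic. Since $b_n \to b$ in $C[0,T]$, we have $m_n \to m$ in $C[0,T]$ with $m(t) = x + \int_0^t b(s)\,ds$. Hence, by Remark \ref{exp-equiv}, it suffices to prove the LDP for $(U^n)_{n\in\N}$ with speed $\varepsilon_n^{-2}$ and good rate function $I_U$, and then obtain the LDP for $(X^n)$ by the translation $I(f)=I_U(f-m)$.

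I would apply Theorem \ref{th:ldp-gaussian} to the family $(U^n)_{n\in\N}$. First, $\E[U_t^n]=0$, so $\E[\langle\lambda,U^n\rangle]=0$. Second, by the It\^o isometry,
\[
\varepsilon_n^{-2}\Var(\langle\lambda,U^n\rangle) = \int_0^T\!\!\int_0^T k_n(t,s)\,d\lambda(t)\,d\lambda(s),\qquad k_n(t,s)=\int_0^{t\wedge s} c_n(u)^2\,du.
\]
Since $c_n\to c$ uniformly, $k_n\to k$ uniformly on $[0,T]^2$ where $k(t,s)=\int_0^{t\wedge s} c(u)^2\,du$; this $k$ is continuous, symmetric, positive definite and is the covariance of the Gaussian process $V_t=\int_0^t c(u)\,dW_u$. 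For exponential tightness I would invoke Proposition 2.1 of \cite{Mac-Pac}: from the uniform bound $\sup_n\|c_n\|_\infty<\infty$ we get
\[
\varepsilon_n^{-2}\E[(U_t^n-U_s^n)^2] = \int_s^t c_n(u)^2\,du \le C|t-s|,
\]
so the covariance $k_n$ is Lipschitz in each variable uniformly in $n$, which yields exponential tightness of the centered Gaussian family at speed $\varepsilon_n^{-2}$.

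Theorem \ref{th:ldp-gaussian} then gives the LDP for $(U^n)$ with rate $I_U(h)=\tfrac{1}{2}\|h\|_{\cl H}^2$, where $\cl H$ is the RKHS of the limit covariance $k$. It is standard (see e.g.\ \cite{Ber-Tho}, or compute directly from $k(t,s)=\int_0^{t\wedge s}c(u)^2 du$) that $\cl H$ consists of those $h\in C_0[0,T]$ admitting a representation $h(t)=\int_0^t c(s)\dot g(s)\,ds$ with $g\in H_0^1[0,T]$, and that the squared RKHS norm equals the infimum of $\int_0^T \dot g(s)^2\,ds$ over all such representations. Combining this identification with $I(f)=I_U(f-m)$ produces exactly the formula \eqref{eq:general rate function W-F}, with $I(f)=+\infty$ when no admissible $g$ exists.

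The only non-routine step is the RKHS identification; once $c$ may vanish on subintervals, the representation is not unique and the infimum in \eqref{eq:general rate function W-F} correctly encodes the RKHS norm, since on any interval where $c\equiv 0$ the requirement $f-m\equiv\text{const}$ is automatic and $\dot g$ can be chosen to vanish there, which minimizes the cost.
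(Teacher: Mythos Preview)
Your argument is correct, but it differs from the paper's treatment. The paper does not give a proof at all: it simply observes that the theorem is ``a simple application of Theorem 3.1 in \cite{Chi-Fis}'', i.e.\ it invokes the general Freidlin--Wentzell type result of Chiarini and Fischer for small-noise It\^o processes as a black box.

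Your route is genuinely different and, in a sense, more in keeping with the rest of Section~\ref{sect:ldp}: you exploit the explicit Gaussian structure of the solution, split $X^n=m_n+U^n$, reduce via Remark~\ref{exp-equiv} to the centered Gaussian family $(U^n)$, and then verify the hypotheses of Theorem~\ref{th:ldp-gaussian} (the abstract G\"artner--Ellis argument) together with exponential tightness from the uniform Lipschitz bound on the scaled increments. The RKHS identification you give is exactly the Volterra-kernel description of Remark~\ref{rem:volterra kernel} for the kernel $K(t,s)=c(s)\mathbf 1_{\{s\le t\}}$, with the infimum accounting for possible non-injectivity when $c$ vanishes. The payoff of your approach is that it stays entirely within the toolkit already set up in the paper and makes the rate function formula transparent; the paper's citation to \cite{Chi-Fis} is shorter and covers a wider class of (non-Gaussian) It\^o diffusions, which is not needed here.
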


\begin{remark}
	\rm In the non-degenerate case, that is, if $c\geq\underline c>0$
 then the rate function (\ref{eq:general rate function W-F})
 simplifies to
 $$I(f)= \begin{cases}\displaystyle\frac 12  \int_0^T \Big(\frac{\dot{f}(s)-{b}(s)}{c(s)}\Big)^2ds& f \in H^1_0[0,T]\\
\displaystyle +\infty & f \notin H^1_0[0,T].\end{cases}$$
 \end{remark}


\section{Large deviations for joint and  marginal distributions}\label{sect:Chaganty}
In this section we introduce the Chaganty Theorem in which a large deviation principle  for a sequence of probability measures on a product space $ E_1 \times E_2 $ (and then for both marginals)
is obtained
starting from the large deviation principle of  the  sequences of marginal and conditional distributions.
The main reference for  this topic is \cite{Cha}.
We recall, for the sake of completeness, some results about conditional distributions in Polish spaces.
Let $ Y $ and $ Z $ be two random variables defined on the same probability space $ (\Omega,\mathscr{F}, \mathbb{P}) $, with values, respectively, in the measurable spaces $ (E_1, \mathscr{E}_1) $ and $ (E_2, \mathscr{E}_2) $.  Let us denote by $ \mu_1$ the (marginal) laws of $ Y $,  by $ \mu_2$ the  marginal of $ Z $  and by $ \mu $ the joint distribution of $ (Y,Z) $ on $ (E, \mathscr{E})=(E_1\times E_2, \mathscr{E}_1\times \mathscr{E}_2). $ A family of probabilities $ (\mu_2(\cdot|y))_{y \in E_1} $ on $ (E_2, \mathscr{E}_2) $ is a regular version of the conditional law of $ Z $ given $ Y $ if
\begin{enumerate}
	\item For every $ B \in \mathscr{E}_2 $, the map ($(E_1, \mathscr{E}_1)\to (\R,\cl B(\R)$)), $ y \mapsto \mu_2(B|y) $ is $ \mathscr{E}_1$-measurable.
	\item For every $ B \in \mathscr{E}_2 $ and $ A \in \mathscr{E}_1 $, $ \mathbb{P}(Y \in A, Z \in B)=\int_A \mu_2(B|y)\mu_1(dy). $
\end{enumerate}
In this case we have
$$ \mu(dy,dz)=\mu_2(dz|y)\mu_1(dy). $$
In this section we will use the notation $ (E,\mathscr{B}) $ to indicate a Polish space (i.e. a separable, completely  metrizable space) with the Borel $ \sigma $-field, and we say that a sequence $ (x_n)_{n \in \mathbb{N}} \subset E $ converges to $ x \in E $, $ x_n \to x $, if $ d_E(x_n,x) \to 0 $, as $ n \to \infty $, where $ d_E $ denotes the metric on $ E $. Regular conditional probabilities do not always exist, but they exist in many cases. The following result, that immediately follows from Corollary 3.1.2 in \cite{Bor}, shows that in Polish spaces the regular version of the conditional probability is well defined.
\begin{proposition}\label{prop:conditional laws}
	 Let $ (E_1, \mathscr{B}_1) $ and $ (E_2, \mathscr{B}_2) $ be two Polish spaces endowed with their Borel $ \sigma $-fields, $ \mu $ be a probability measure on $ (E, \mathscr{B})= (E_1 \times E_2, \mathscr{B}_1 \times \mathscr{B}_2) $. Let $ \mu_i $,  $i=1,2 $, be the marginal probability measures on $ (E_i, \mathscr{B}_i)$. Then there exists $ \mu_1 $-almost sure a unique regular version of the conditional law of $ \mu_2 $ given $ \mu_1 $, i.e.
	 $$ \mu(dy,dz)=\mu_2(dz|y)\mu_1(dy) .$$
\end{proposition}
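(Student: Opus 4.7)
The result is a classical consequence of the Polish structure; my plan is to give a direct construction of $\mu_2(\cdot\mid y)$ via conditional expectations on a countable generating family, and then to upgrade it to a genuine probability measure for $\mu_1$-almost every $y$ using the Borel isomorphism of Polish spaces with subsets of $[0,1]$.

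Since $E_2$ is Polish, it is second countable, so there exists a countable algebra $\mathscr{A} \subset \mathscr{B}_2$ generating $\mathscr{B}_2$. For each $B\in \mathscr{A}$, I would choose a $\mathscr{B}_1$-measurable version $q(B,\cdot)$ of the conditional expectation $\E[\mathbf{1}_{E_1\times B}\mid Y]$, which exists since $\mu_1$ is the first marginal of $\mu$. By the standard properties of conditional expectation and the countability of $\mathscr{A}$, one finds a single $\mu_1$-null set $N\subset E_1$ outside of which $B\mapsto q(B,y)$ is finitely additive on $\mathscr{A}$, assigns mass $1$ to $E_2$, and vanishes on $\emptyset$; handling only countably many exceptional sets at once is what makes this step work.

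The main technical step, and the one I expect to be the real obstacle, is to promote finite additivity on $\mathscr{A}$ to countable additivity on $\mathscr{B}_2$. Here the Polish hypothesis enters essentially: via Kuratowski's theorem one identifies $E_2$ with a Borel subset of $[0,1]$, and then countable additivity of $q(\cdot,y)$ for $y\notin N$ can be read off from the regularity of finitely additive set functions on $[0,1]$ (equivalently, one builds a conditional distribution function at the rationals and extends it monotonically). The delicate point is to control a countable family of null sets simultaneously while preserving the $\mathscr{B}_1$-measurability of $y\mapsto q(B,y)$ for every $B\in\mathscr{B}_2$; this is accomplished by observing that the family $\{B\in\mathscr{B}_2 : y\mapsto q(B,y)\text{ is }\mathscr{B}_1\text{-measurable}\}$ contains $\mathscr{A}$ and is a Dynkin class, hence all of $\mathscr{B}_2$ by a monotone class argument. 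After redefining $\mu_2(\cdot\mid y):=\delta_{z_0}$ for $y\in N$ with a fixed $z_0\in E_2$, the identity $\mu(A\times B)=\int_A \mu_2(B\mid y)\mu_1(dy)$ then extends from measurable rectangles to all of $\mathscr{B}_1\times\mathscr{B}_2$ by a further monotone class argument.

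Uniqueness is the easier half: if $(\mu_2(\cdot\mid y))_{y\in E_1}$ and $(\widetilde{\mu}_2(\cdot\mid y))_{y\in E_1}$ are two regular versions, then for every fixed $B\in\mathscr{A}$ the measurable maps $y\mapsto\mu_2(B\mid y)$ and $y\mapsto\widetilde{\mu}_2(B\mid y)$ are versions of the same conditional expectation, hence agree outside a $\mu_1$-null set $N_B$. Since $\mathscr{A}$ is countable, the union $N_\star:=\bigcup_{B\in\mathscr{A}}N_B$ is still $\mu_1$-null, and for $y\notin N_\star$ the two probability measures coincide on $\mathscr{A}$ and therefore on all of $\mathscr{B}_2$ by the uniqueness part of Carath\'eodory's extension theorem.
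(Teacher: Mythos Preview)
Your argument is a correct, classical construction of regular conditional probabilities on Polish spaces: the countable generating algebra, the Borel isomorphism with a subset of $[0,1]$ to upgrade finite additivity to countable additivity, the Dynkin/monotone class arguments, and the uniqueness via a countable $\pi$-system are all standard and would go through as written.

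The paper, however, does not give any proof of this proposition at all; it simply states that the result ``immediately follows from Corollary 3.1.2 in \cite{Bor}'' (Borkar, \emph{Probability Theory}). So there is no proof in the paper to compare your approach against. What you have written is essentially the content behind that citation, and it is more than what the paper itself provides.
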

In what follows we always suppose random variables taking values in a Polish space. \\
	 Let $ (E_1, \mathscr{B}_1) $ and $ (E_2, \mathscr{B}_2) $ be two Polish spaces. We denote by $ (\mu_n)_{n \in \mathbb{N}} $ a sequence of probability measures on the product space $ (E_1 \times E_2, \mathscr{B}_1 \times \mathscr{B}_2) $ (the sequence of the \textit{joint distributions}), by $ (\mu_{in})_{n \in \mathbb{N}} $, for $ i=1,2 ,$ the sequence of the \textit{marginal distributions} on $ (E_i, \mathscr{B}_i) $ and by $ (\mu_{2n}(\cdot|x_1))_{n \in \mathbb{N}} $ the sequence of the  \textit{conditional distributions} on $ (E_2, \mathscr{B}_2) $ $ (x_1 \in E_1) $ given by Proposition \ref{prop:conditional laws}, i.e.
	 \begin{eqnarray}\label{eq:joint distribution}
	 \mu_n(B_1 \times B_2)=  \int_{B_1} \mu_{2n}(B_2|x_1)\mu_{1n}(dx_1)
	 \end{eqnarray}
	 for every $ B_1 \times B_2, $ with $ B_1 \in \mathscr{B}_1 $ and $ B_2 \in \mathscr{B}_2. $
	 \begin{definition}
	 	 Let $ (E_1, \mathscr{B}_1), \, (E_2, \mathscr{B}_2) $ be two Polish spaces and $ x_1 \in E_1. $ We say that the sequence of conditional laws $ (\mu_{2n}(\, \cdot \,|x_1))_{n \in \mathbb{N}} $ on $ (E_2, \mathscr{B}_2) $ satisfies the \textbf{LDP continuously} in $ x_1 $ with the rate function $ J(\, \cdot \,|x_1) $ and the speed $ \gamma_n $, or simply, the \textbf{LDP continuity condition} holds, if
	 	 \begin{enumerate}
	 	 	\item [\textit (a)] For each $ x_1 \in E_1 ,$ $ J(\, \cdot \,|x_1) $ is a good rate function on $ E_2 $.
	 	 	\item [\textit (b)]  For any sequence $ (x_{1n})_{n \in \mathbb{N}} $ in $ E_1 $ such that $ x_{1n} \to x_1 $, the sequence of measures $ (\mu_{2n}(\, \cdot \,|x_{1n}))_{n \in \mathbb{N}} $ satisfies a LDP on $ E_2 $ with the  rate function $ J(\, \cdot \,|x_1) $ and the speed $ \gamma_n $.
	 	 	\item [\textit (c)] $ J(\, \cdot \,|\, \cdot \,) $ is lower semicontinuous as a function of $ (x_1,x_2) \in E_1 \times E_2. $
	 	\end{enumerate}
	 \end{definition}

		\begin{theorem}\label{th:Chaganty}{\rm[Theorem 2.3 in \cite{Cha}]}
			Let $ (E_1, \mathscr{B}_1), \, (E_2, \mathscr{B}_2) $ be two Polish spaces. Let $ (\mu_{1n})_{n \in \mathbb{N}} $ be a sequence of probability measures on $ (E_1, \mathscr{B}_1)$. For $ x_1 \in E_1 $ let $ (\mu_{2n}(\cdot|x_1))_{n \in \mathbb{N}} $ be the  sequence of the conditional laws on $ (E_2, \mathscr{B}_2) $. Suppose that the following two conditions are satisfied:
			\begin{enumerate}
				\item [\textit (i)] $ (\mu_{1n})_{n \in \mathbb{N}} $ satisfies a LDP on $ E_1 $ with the good rate function $ I_1(\cdot) $ and the speed $ \gamma_n $.
				\item [\textit (ii)] for every $ x_1 \in E_1, $ the sequence $ (\mu_{2n}(\cdot|x_1))_{n \in \mathbb{N} }$ obeys the LDP continuity condition with the rate function $ J(\cdot|x_1) $ and the speed $ \gamma_n $.
			\end{enumerate}
		Then the sequence of joint distributions $ (\mu_n)_{n \in \mathbb{N}} $, given by (\ref{eq:joint distribution}), satisfies a WLDP on $ E=E_1 \times E_2 $ with the speed $ \gamma_n $ and the rate function  $$I(x_1,x_2)=I_1(x_1)+J(x_2|x_1) ,$$
			for $ x_1 \in E_1 $ and $ x_2 \in E_2 $. Furthermore the sequence of the marginal distributions $ (\mu_{2n})_{n \in \mathbb{N}} $ defined on $ (E_2, \mathscr{B}_2) $, satisfies a LDP with the speed $ \gamma_n $, and the rate function
			$$ I_2(x_2)=\inf_{x_1 \in E_1} I(x_1,x_2). $$
			Moreover, $ (\mu_n)_{n \in \mathbb{N}} $ satisfies a LDP if $ I(\,\cdot,\cdot\,) $ is a good rate function, and in this case, also $ I_2(\cdot) $ is a good rate function.
		\end{theorem}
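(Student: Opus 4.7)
The plan is to split the argument into three parts: (I) a weak LDP for the joint laws $(\mu_n)$; (II) the full LDP for the marginal $(\mu_{2n})$; and (III) the upgrade of (I) to a full LDP when $I$ is good. The common tool is the disintegration
$\mu_n(A_1 \times A_2) = \int_{A_1} \mu_{2n}(A_2 \mid x_1)\,\mu_{1n}(dx_1)$,
which rewrites every probability of interest as an integral against $\mu_{1n}$ of a conditional probability, so that hypothesis (i) and the conditional LDPs in (ii) can be chained together.

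For (I), on an open $G$ I would fix $(x_1^\circ, x_2^\circ) \in G$ and choose a product neighborhood $U_1 \times U_2 \subset G$. The LDP continuity condition yields $\liminf \gamma_n^{-1} \log \mu_{2n}(U_2 \mid x_{1,n}) \geq -J(x_2^\circ \mid x_1^\circ)$ for every sequence $x_{1,n} \to x_1^\circ$. A first-countability contradiction in the Polish space $E_1$ then promotes this pointwise statement to a uniform one: for each $\eta > 0$ there exist an open neighborhood $V_1 \subset U_1$ of $x_1^\circ$ and an index $N$ with $\gamma_n^{-1} \log \mu_{2n}(U_2 \mid x_1) \geq -J(x_2^\circ \mid x_1^\circ) - \eta$ for all $n \geq N$ and all $x_1 \in V_1$. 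Plugging this into the disintegration and using the LDP lower bound for $\mu_{1n}$ on $V_1$ gives $\liminf \gamma_n^{-1} \log \mu_n(G) \geq -I_1(x_1^\circ) - J(x_2^\circ \mid x_1^\circ) - \eta$; letting $\eta \to 0$ and taking supremum over $(x_1^\circ, x_2^\circ) \in G$ delivers the desired lower bound. The upper bound on a compact $K$ proceeds dually: around each $(x_1^\circ, x_2^\circ) \in K$ I would pick small closed neighborhoods on which the LDP continuity produces a uniform conditional upper bound, extract a finite subcover of $K$ by compactness, and combine those estimates cell-by-cell with the $\mu_{1n}$ upper bound via the elementary inequality $\log(a_1 + \cdots + a_N) \leq \log N + \max_i \log a_i$.

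For (II), the marginal lower bound on an open $G_2 \subset E_2$ is immediate from applying (I) to the open rectangle $E_1 \times G_2$. The upper bound on a closed $C \subset E_2$ is more delicate because $E_1 \times C$ need not be compact; here the goodness of $I_1$ supplies exponential tightness of $(\mu_{1n})$, so for any $R > 0$ I can pick a compact $K_R \subset E_1$ with $\mu_{1n}(K_R^c) \leq e^{-\gamma_n R}$ for large $n$. Splitting $\mu_{2n}(C) = \mu_n(K_R \times C) + \mu_n(K_R^c \times C)$, I would bound the first term by covering $K_R$ with finitely many neighborhoods on which the conditional upper bound is uniform, bound the second by exponential tightness, and then send $R \to \infty$.

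For (III), the upgrade from WLDP to full LDP for $(\mu_n)$ requires exponential tightness, which I would derive from goodness of $I$: the level set $\{I \leq R\}$ is compact, and a variant of the tightness-plus-uniform-conditional argument in (II) shows that its complement carries mass of order $e^{-\gamma_n R}$. Once the full joint LDP is established, goodness of $I_2$ follows from the contraction principle applied to the continuous projection $(x_1, x_2) \mapsto x_2$. The recurring obstacle at every stage is the promotion of the sequential LDP continuity (LDPs along arbitrary sequences $x_{1,n} \to x_1^\circ$) to genuinely uniform conditional bounds on a whole neighborhood of $x_1^\circ$; this is the one place where the Polish-space structure, joint lower semicontinuity of $J(\cdot \mid \cdot)$, and goodness of the rate functions all have to be exploited together, and it is what makes the proof more than a routine bookkeeping exercise on the disintegration formula.
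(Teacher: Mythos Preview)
The paper does not give its own proof of this theorem: it is quoted verbatim as ``Theorem 2.3 in \cite{Cha}'' and used as a black box throughout Sections \ref{sect:uncorrelatedSVM}--\ref{sect:VolterraGen}. So there is no paper-side argument to compare your proposal against; your outline is essentially a sketch of Chaganty's original proof, and the three-part structure (weak LDP for the joint via disintegration and local uniformization of the conditional LDP, full LDP for the second marginal via exponential tightness of $(\mu_{1n})$ coming from goodness of $I_1$, upgrade to a full joint LDP under goodness of $I$) is the right one and matches the reference.
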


We shall give a sufficient condition on the rate functions $ I_1(\cdot) $ and $ J(\cdot|\cdot) $ which guarantees that $ I(\,\cdot,\cdot\,) $ is a good rate function. See Lemma 2.6 in \cite{Cha}.
\begin{lemma}\label{lemma:good rate function}
In the same hypotheses of Theorem \ref{th:Chaganty},
	if the set
$$ \bigcup_{x_1 \in K_1}\{x_2: J(x_2|x_1)\leq L\} $$ is a compact subset of $ E_2 $ for any $ L \geq 0 $ and for any compact set $ K_1  \subset E_1, $
then $ I(\,\cdot,\cdot\,) $ is a good rate function (and therefore also $I_2(\cdot)$ is a good rate function).
	\end{lemma}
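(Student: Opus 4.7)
The plan is to show that, for every $L\ge 0$, the sublevel set
$$\Phi_L=\{(x_1,x_2)\in E_1\times E_2 : I(x_1,x_2)\le L\}=\{(x_1,x_2): I_1(x_1)+J(x_2|x_1)\le L\}$$
is compact in the Polish space $E_1\times E_2$. The standard strategy is to exhibit $\Phi_L$ as a closed subset of a compact set.

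First I would bound $\Phi_L$ by a compact set. Since both $I_1$ and $J(\cdot|\cdot)$ are nonnegative, any $(x_1,x_2)\in \Phi_L$ satisfies $I_1(x_1)\le L$ and $J(x_2|x_1)\le L$. Set $K_1:=\{x_1\in E_1 : I_1(x_1)\le L\}$, which is compact by hypothesis \textit{(i)} of Theorem \ref{th:Chaganty} (since $I_1$ is a good rate function). Then
$$\Phi_L\ \subset\ K_1\ \times\ \bigcup_{x_1\in K_1}\{x_2: J(x_2|x_1)\le L\},$$
and the second factor is compact by the assumption of the lemma. Hence $\Phi_L$ is contained in the product of two compact sets, which is compact.

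Next I would establish closedness of $\Phi_L$ by lower semicontinuity of $I$. The rate function $I_1$ is lower semicontinuous by definition, and condition \textit{(c)} of the LDP continuity condition states exactly that $J(\cdot|\cdot)$ is lower semicontinuous as a function of $(x_1,x_2)$ on $E_1\times E_2$. Thus $I(x_1,x_2)=I_1(x_1)+J(x_2|x_1)$ is a sum of two lower semicontinuous (nonnegative) functions, hence lower semicontinuous, so $\Phi_L$ is closed. A closed subset of a compact set in the Hausdorff space $E_1\times E_2$ is compact, and since $L\ge 0$ was arbitrary this shows $I$ is a good rate function.

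There is no real obstacle here: the lemma is a straightforward packaging of the hypothesis on the fiberwise sublevel sets together with the goodness of $I_1$ and the joint lower semicontinuity of $J$ provided by the LDP continuity condition. The only subtlety worth flagging is that one must use the \emph{joint} lower semicontinuity \textit{(c)} of $J(\cdot|\cdot)$, not merely lower semicontinuity in $x_2$ for each fixed $x_1$, in order to conclude that $I$ itself is lower semicontinuous on the product space.
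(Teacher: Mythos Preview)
Your proof is correct. The paper does not give a self-contained proof of this lemma (it simply refers to Lemma~2.6 in \cite{Cha}), but your argument is precisely the one the paper carries out later in the proof of Proposition~\ref{prop:Im good}: show that the sublevel set $\Phi_L$ is closed by the joint lower semicontinuity of $I_1$ and $J$, and then trap it inside $K_1\times\bigcup_{x_1\in K_1}\{x_2:J(x_2|x_1)\le L\}$, which is compact by goodness of $I_1$ and the hypothesis of the lemma.
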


\section{Volterra type stochastic volatility models }\label{sect:SVM}
In the stochastic volatility models of  interest the dynamic of the asset price process $(S_t)_{t \in [0,T]}$ is modeled by the following equation
\begin{eqnarray}\label{eq:price-SDE}
\begin{cases}
dS_t = S_t \mu(\hat{B}_t)dt + S_t\sigma(\hat{B}_t)d(\bar{\rho}W_t+\rho B_t) \qquad 0\leq t \leq T,\\
S_0=s_0 >0,
\end{cases}
\end{eqnarray}
where  $s_0$ is the initial price, $ T > 0 $ is the time horizon,
$ \hat{B} $ is a non-degenerate continuous Volterra type  process as in (\ref{eq:integral representation}) for some kernel $ K $ which satisfies the conditions in Definition \ref{def:Volterra process},
the processes $ W $ and $ B $  are two independent standard Brownian motions, $ \rho \in (-1,1) $ is the correlation coefficient and $ \bar{\rho}=\sqrt{1-\rho^2}.$ Remark that $ \bar{\rho}W+\rho B $ is another standard Brownian motion which has correlation coefficient $ \rho $ with $ B $. If $ \rho \neq 0 $ the model is called a correlated stochastic volatility model, otherwise it is called an uncorrelated model.
The equation in (\ref{eq:price-SDE}) is considered on a filtered probability space $ (\Omega, \mathscr{F},(\mathscr{F}_t)_{0\leq t \leq T}, \mathbb{P});$ $(\mathscr{F}_t)_{0\leq t \leq T} $ is the filtration generated by $W$ and $B$, completed by the null sets, and  made right-continuous.
The filtration  $(\mathscr{F}_t)_{0\leq t \leq T} $ represents the information given by the two Brownian motions.
It is assumed in (\ref{eq:price-SDE}) that $ \mu: \mathbb{R} \to \mathbb{R} $ and $ \sigma: \mathbb{R} \to (0, +\infty) $ are continuous functions.
It follows from (\ref{eq:price-SDE}) that the process $ \sigma(\hat{B}) = (\sigma(\hat{B}_t))_{0\leq t \leq T}$ describes the stochastic evolution of volatility in the model and  the process
$\mu(\hat{B})=(\mu(\hat{B}_t))_{0\leq t \leq T} $ is an adapted return process.

Equation (\ref{eq:price-SDE}) has  a unique solution that can be represented as an exponential functional. The unique solution to the equation in (\ref{eq:price-SDE}) is the Dol\'{e}ans-Dade exponential
$$S_t=s_0 \exp \Big\{\int_0^t\mu(\hat{B}_s)\, ds-\frac 12 \int_0^t \sigma(\hat{B}_s)^2 \, ds +\bar{\rho} \int_0^t \sigma(\hat{B}_s)\, dW_s + \rho \int_0^t \sigma(\hat{B}_s)\, dB_s  \Big\}$$
for $  0\leq t \leq T $ (for further details, see Section IX-2 in \cite{Re-Yor}).
Therefore, the log-price process $ Z_t= \log S_t$, $ 0\leq t \leq T, $ with $ Z_0=x_0=\log s_0 $ is
\begin{eqnarray}\label{eq:X-explicit}
Z_t=x_0+ \int_0^t\mu(\hat{B}_s)\,ds-\frac 12\int_0^t\sigma(\hat{B}_s)^2 ds +  \bar{\rho}\int_0^t\sigma(\hat{B}_s) dW_s + \rho\int_0^t \sigma(\hat{B}_s) dB_s .
\end{eqnarray}

Let $ \ep_n: \mathbb{N} \to \mathbb{R}_+ $ be an infinitesimal  function.  For every $ n \in \mathbb{N} $, we will consider  the following scaled version of the stochastic differential equation in (\ref{eq:price-SDE})
$$\begin{cases}
dS_t^{n} =S_t^{n}\mu(\hat{B}_t^n)dt+ \ep_n  S_t^{n}\sigma(\hat{B}_t^n)d(\bar{\rho}W_t+\rho B_t) \qquad 0\leq t \leq T,\\
S_0^{n}=s_0.
\end{cases}$$
where, for every $ n \in \mathbb{N}, $
\begin{equation}\label{eq:Volterra B^n}\hat{B}^n_t=\ep_n  \hat B_t
,\quad  t\in[0,T].\end{equation}

In the next sections we will obtain a  sample path large deviation principle for the family of log-price processes $ ((Z_t^{n}-x_0)_{t \in [0,T]})_{n \in \mathbb{N}}.$
By contraction we will deduce a large deviation principle for the family $ (Z_T^{n}-x_0)_{n \in \mathbb{N}}$ obtaining the same result contained in \cite{Gu1}.

We will start by proving a  large deviation principle for the log-price in the uncorrelated stochastic volatility model and then we extend the results to the class of correlated models.
\section{LDP for the uncorrelated stochastic volatility model}\label{sect:uncorrelatedSVM}
We first consider, for $\bar\rho\neq 0$,   the model described by
\begin{eqnarray}\label{eq:price-SDE-uncorr}
\begin{cases}
dS_t = S_t \mu(\hat{B}_t)dt + \bar{\rho}S_t\sigma(\hat{B}_t)dW_t, \qquad 0\leq t \leq T,\\
S_0=s_0 >0,
\end{cases}
\end{eqnarray}
where the processes $ W $ and $ B $ driving, respectively, the stock price and the volatility equations  are two independent standard Brownian motions, so the model in (\ref{eq:price-SDE-uncorr}) is an uncorrelated stochastic volatility model.
 The corresponding scaled model is given by
\begin{eqnarray*}
\begin{cases}
dS_t^{n} = S_t^{n} \mu(\hat{B}_t^n)dt + \ep_nS_t^{n}\bar\rho\sigma(\hat{B}_t^n)dW_t, \qquad 0\leq t \leq T,\\
S_0^{n}=s_0 >0,
\end{cases}
\end{eqnarray*}
where, for every $ n \in \mathbb{N} $, $ \hat{B}^n $ is the Volterra process defined in  equation (\ref{eq:Volterra B^n}).
Moreover, the process $ X_t^{n}=\log S_t^{n} ,$ $ 0\leq t \leq T $, with $ X_0^{n}=x_0=\log s_0 $ is
\begin{equation}\label{eq:logprice-uncorr-scaled}
X_t^{n}=x_0+\int_0^t\Big(\mu(\hat{B}_s^n)-\frac 12\ep_n^{2}\sigma(\hat{B}_s^n)^2 \Big)\,ds +  \ep_n\bar\rho \int_0^t\sigma(\hat{B}_s^n) dW_s.
\end{equation}

We will prove that hypotheses of Chaganty's Theorem \ref{th:Chaganty} hold for the family of processes
$$  (\hat{B}^n,X^{n} -x_0)_{n \in \mathbb{N}}. $$

In order to guarantee the hypotheses of Chaganty's Theorem,
we will need to impose some  conditions on the coefficients.
First, let us recall, for future references, some well known facts on continuous functions.

\begin{remark}\label{rem:continuous}\rm
$(i)$
Suppose  $f:\R\to \R$ is a   continuous function and let
 $\varphi_n, \varphi\in C[0,T]$ be  functions
such that $ \varphi_n \overset{C[0,T]}{\underset{}{\longrightarrow}} \varphi, $ as $ n \to +\infty, $
then $ f\circ\varphi_n \overset{C[0,T]}{\underset{}{\longrightarrow}} f\circ\varphi, $ as $ n \to +\infty. $

$(ii)$
Suppose  $f:\R\to (0,+\infty)$ is a   continuous function  and let $(\varphi_n)_n\subset  C[0,T]$ be a sequence of equi-bounded functions, i.e., there exist $M>0$ such that for every $n\in\N$,
$\sup_{t\in[0,T]}|\varphi_n(t)| \leq M$, then there exist constants ${\underline f}_M,{\overline f}_M >0$ such that, for every $n\in\N$ and for every $t\in[0,T]$, $$0<{\underline f}_M\leq f(\varphi_n(t))\leq {\overline f}_M.$$

\end{remark}

\begin{assumption}\label{ass:hp-sigma-mu-I}
$ \sigma: \R \longrightarrow (0,+\infty) $ and $ \mu: \R \longrightarrow \R $  are continuous functions.
\end{assumption}

\begin{remark}\label{rem:hat-f}\rm
For $L>0$, denote by $D_{L}$ the level sets in the Cameron Martin space, i.e.
\begin{equation}\label{eq:ball-CM}
D_{L}=\{f\in H_0^1[0,T]: \lVert f\rVert^2_{H_0^1[0,T]}\leq L\}.\end{equation}
Then for $f\in D_{L}$, from the Cauchy-Schwarz inequality,
$$|\hat{f}(t)|=\bigg|\int_0^tK(t,s)\dot{f}(s)\,ds \bigg|\leq\lVert f\rVert_{H_0^1[0,T]}\bigg(\int_0^TK^2(t,s)\,ds \bigg)^{\frac12}. $$
	Therefore (thanks to conditions $(a)$ and $(b)$  in Definition \ref{def:Volterra process}) there exists a constant $ \hat L>0 $
such that  $$\sup_{f\in D_{L}}\sup_{t \in [0,T]}|\hat{f}(t)|\leq \hat L.$$
\end{remark}
Let $ \mu_{1n} $ denote the law induced by $ \hat{B}^n $ on
the Polish space $ (E_1, \mathscr{B}_1)=(C_0[0,T], \mathscr{B}(C_0[0,T])) $ and
 for  $ n \in \mathbb{N} $, let $ \mu_{2n} $ be the law induced by $ X^{n}-x_0$  on  $ (E_2, \mathscr{B}_2)=(C_0[0,T], \mathscr{B}(C_0[0,T]))$.
  Moreover, for (almost) every $ \varphi \in C_0[0,T] $,  $ n \in \mathbb{N},$
let $\mu_{2n}(\cdot|\varphi) $ be  the conditional  law of the process,
 $${X}^{n,\varphi}=X^{n}|(\hat{B}^n_t=\varphi(t) \quad 0 \leq t \leq T),$$
i.e. for $ \varphi \in C_0[0,T]$,  $ \mu_{2n}(\cdot|\varphi) $ is the law of the process
\begin{equation}\label{eq:logprice-uncorr-scaled-cond}
{X}^{n,\varphi}_t= x_0+ \int_0^t\Big(\mu(\varphi(s))-\frac 12 \ep^{2}_n \sigma(\varphi(s))^2\Big)\, ds + \ep_n\bar\rho   \int_0^t\sigma(\varphi(s))\, dW_s, \quad 0\leq t \leq T.\\
\end{equation}

Let's now check that the hypotheses of Theorem \ref{th:Chaganty} are fulfilled.
The sequence $ (\mu_{1n})_{n \in \mathbb{N}} $ satisfies a LDP on $ C_0[0,T] $ with  the  speed $ \ep_n^{-2}$ and the good rate function $ I_{\hat{B}}(\cdot) $ given by (\ref{eq:volterra rate function})  (condition $(i)$ of Theorem \ref{th:Chaganty}); therefore it is enough to show that the conditions $(a)$, $(b)$ and $(c)$ of LDP continuity condition are satisfied (condition $(ii)$ of Theorem \ref{th:Chaganty}).

\begin{proposition} \label{prop:LDP continuity condition Uncorrelated}
The sequence of the conditional laws $ (\mu_{2n}(\, \cdot \,|\varphi))_{n \in \mathbb{N}} $  satisfies, on $ C_0[0,T] $, the LDP continuity condition  with the rate function $ J( \cdot|\varphi) $ and and the inverse speed $ \ep_n^{2}$.
\end{proposition}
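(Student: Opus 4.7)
The conditional process in (\ref{eq:logprice-uncorr-scaled-cond}) is a Gaussian diffusion of exactly the form treated in Theorem \ref{th:general W-F}: writing $Y^{n,\varphi}_t = X^{n,\varphi}_t - x_0$, we have
\begin{equation*}
Y^{n,\varphi}_t = \int_0^t b_n^\varphi(s)\,ds + \ep_n \int_0^t c^\varphi(s)\, dW_s, \qquad Y^{n,\varphi}_0 = 0,
\end{equation*}
with $b_n^\varphi(s) = \mu(\varphi(s)) - \tfrac12 \ep_n^2 \sigma(\varphi(s))^2$ and $c^\varphi(s) = \bar\rho\,\sigma(\varphi(s))$ (independent of $n$). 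This observation is the bridge that lets us translate each of the three requirements \textit{(a)}, \textit{(b)}, \textit{(c)} into an application of Theorem \ref{th:general W-F} plus a routine semicontinuity argument.

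\textbf{Conditions \textit{(a)} and \textit{(b)}.} Fix $\varphi \in C_0[0,T]$ and let $\varphi_n \to \varphi$ in $C_0[0,T]$. By Remark \ref{rem:continuous}\textit{(i)} the compositions $\mu \circ \varphi_n \to \mu \circ \varphi$ and $\sigma \circ \varphi_n \to \sigma \circ \varphi$ uniformly on $[0,T]$; Remark \ref{rem:continuous}\textit{(ii)} gives that $\sigma(\varphi_n(\cdot))$ is uniformly bounded, so the $\ep_n^2$-term in $b_n^{\varphi_n}$ vanishes uniformly. Hence $b_n^{\varphi_n} \to \mu \circ \varphi$ and $c^{\varphi_n} \to \bar\rho\,\sigma \circ \varphi$ in $C[0,T]$. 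Theorem \ref{th:general W-F} then yields a LDP for $(Y^{n,\varphi_n})_{n \in \N}$ on $C_0[0,T]$ with speed $\ep_n^{-2}$ and a good rate function we denote $J(\cdot|\varphi)$; specialising to the constant sequence $\varphi_n \equiv \varphi$ gives \textit{(a)}, and the general case is \textit{(b)}. Moreover, since $\sigma > 0$ is continuous and $\varphi([0,T])$ is compact, $\bar\rho\,\sigma(\varphi(\cdot))$ is bounded below by a positive constant, so the non-degenerate form of the rate function applies:
\begin{equation*}
J(h|\varphi) = \frac{1}{2}\int_0^T \frac{(\dot h(s) - \mu(\varphi(s)))^2}{\bar\rho^{\,2}\,\sigma(\varphi(s))^2}\, ds \quad \text{if } h \in H_0^1[0,T],
\end{equation*}
and $J(h|\varphi) = +\infty$ otherwise.

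\textbf{Condition \textit{(c)}.} Joint lower semicontinuity of $J$ on $C_0[0,T]^2$ follows from the explicit formula above by a standard weak-$L^2$ argument. Given $(\varphi_n, h_n) \to (\varphi, h)$ in $C_0[0,T]^2$ with $L := \liminf_n J(h_n|\varphi_n) < +\infty$, pass to a subsequence realising $L$. The sequence $\{\varphi_n\} \cup \{\varphi\}$ is uniformly bounded in $C[0,T]$, so by Remark \ref{rem:continuous}\textit{(ii)} the reciprocal $1/(\bar\rho^{\,2}\sigma(\varphi_n)^2)$ is bounded below by a positive constant and $\mu(\varphi_n)$ is bounded in $L^2[0,T]$; the bound on $J(h_n|\varphi_n)$ then forces $\|\dot h_n\|_{L^2[0,T]}$ to be bounded. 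Extract a weak-$L^2$ limit of $\dot h_n$, which must coincide with $\dot h$ (uniform convergence $h_n \to h$ identifies the primitive, so in particular $h \in H_0^1[0,T]$), and combine weak lower semicontinuity of the $L^2$-norm with the uniform convergences $\mu(\varphi_n) \to \mu(\varphi)$, $\sigma(\varphi_n) \to \sigma(\varphi)$ to conclude $J(h|\varphi) \le L$.

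The only mildly non-routine step is \textit{(c)}, and even there the weak-$L^2$ tactic is standard once the non-degenerate closed form of $J$ has been established in the proof of \textit{(a)}--\textit{(b)}; the principal work is really just the coefficient-convergence bookkeeping that allows Theorem \ref{th:general W-F} to be invoked.
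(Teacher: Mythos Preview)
Your proof is correct. Parts \textit{(a)} and \textit{(b)} are essentially identical to the paper's argument: both invoke Theorem \ref{th:general W-F} after checking that $b_n^{\varphi_n}\to\mu\circ\varphi$ and $c^{\varphi_n}\to\bar\rho\,\sigma\circ\varphi$ uniformly, and both use the positivity of $\sigma$ on the compact range of $\varphi$ to obtain the explicit non-degenerate form of $J(\cdot|\varphi)$.

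For \textit{(c)} the two proofs diverge. The paper does not run a weak-$L^2$ compactness argument; instead it recycles the lower semicontinuity of $J(\cdot|\varphi)$ already established in \textit{(a)} via the algebraic factorisation
\[
J(x_n|\varphi_n)\;\geq\;\inf_{t\in[0,T]}\Big(\frac{\sigma(\varphi(t))}{\sigma(\varphi_n(t))}\Big)^{2}\,
J\Big(x_n+\!\int_0^{\cdot}\!\big(\mu(\varphi(s))-\mu(\varphi_n(s))\big)\,ds\,\Big|\,\varphi\Big),
\]
and then lets $n\to\infty$: the prefactor tends to $1$ by uniform convergence of $\sigma\circ\varphi_n$, while the shifted argument converges to $x$ in $C_0[0,T]$, so the goodness of $J(\cdot|\varphi)$ finishes the job. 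This is shorter and avoids any subsequence extraction, at the cost of relying on the rate-function interpretation of $J(\cdot|\varphi)$. Your direct argument---bounding $\|\dot h_n\|_{L^2}$, extracting a weak limit, identifying it with $\dot h$, and using that $a_n v_n\rightharpoonup av$ when $a_n\to a$ in $L^\infty$ and $v_n\rightharpoonup v$ in $L^2$---is self-contained and would work equally well in settings where the functional is not known \emph{a priori} to be a good rate function. Both approaches are standard; the paper's is more economical here because \textit{(a)} has already done the work.
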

\begin{remark}\rm
Notice that, for every $ \varphi \in C_0[0,T] $ and $ n \in \mathbb{N} ,$ ${X}^{n,\varphi} $ is a Gaussian diffusion process. We will prove that
the sequence of conditional distributions obeys the conditions $(a)$ and $(b)$ of the LDP continuity condition by  using the generalized Freidlin-Wentzell's Theorem \ref{th:general W-F}. The same result can be obtained by using theory of Gaussian processes. For major details
see, for example, Section 5 in \cite{Pac-Pig}.
\end{remark}

\proofof{Proposition \ref{prop:LDP continuity condition Uncorrelated}}

$(a)$  For  $ \varphi \in C_0[0,T] $ we check that
$ (\mu_{2n}(\cdot|\varphi))_{n \in \mathbb{N}} $ obeys a  LDP   on $ C_0[0,T] $ with the good rate function $J(\cdot|\varphi)$.
With the same notation of  Theorem \ref{th:general W-F}, we have
\begin{itemize}
	\item $ b_n(t)=\mu(\varphi(t))-\frac 12 \ep^{2}_n \sigma(\varphi(t))^2$, then $b_n(t) \to \mu(\varphi(t)) $, as $ n \to +\infty, $ uniformly for $ t \in [0,T] $;
	\item $c_n(t)=\sigma(\varphi(t)) $, not depending on $ n. $
\end{itemize}
Then the family $ (\mu_{2n}(\cdot|\varphi))_{n\in \mathbb{N}} $ satisfies a  LDP on $ C_0[0,T] $ with the inverse speed $ \ep^{2}_n $ and the good rate function
$$J(x|\varphi) = \inf \Big\{\frac 12  \int_0^T \dot{y}(t)^2 \, dt:\, \int_0^t\mu(\varphi(s))\,ds+\bar\rho\int_0^t\sigma(\varphi(s))\dot{y}(s)\, ds=x(t), \,  y \in H^1_0[0,T]\Big\}$$
with the usual  understanding $J(x|\varphi)= +\infty  $ if the set is empty.
If $ y \in H_0^1[0,T] $ then
$$ \dot{x}(t)=\mu(\varphi(t))+\bar\rho\,\sigma(\varphi(t))\dot{y}(t) \,\,\mbox{ a.e., with } x(0)=0.$$
Thanks to Remark \ref{rem:continuous} $(ii)$, $\sigma\circ \varphi>0$ and the rate above simplifies to
\begin{eqnarray}\label{eq:cond-rate-function} J(x|\varphi)=\begin{cases}
\displaystyle\frac 12 \int_0^T \bigg(\frac{\dot{x}(t)-\mu(\varphi(t))}{\bar\rho\,\sigma(\varphi(t))}\bigg)^2  dt & x \in H^1_0[0,T]\\

+\infty &\mbox{ otherwise}.
\end{cases}
\end{eqnarray}

\medskip

$(b)$  Let $ (\varphi_n)_{n \in \mathbb{N}} \subset C_0[0,T] $ and $ \varphi \in C_0[0,T] $ be functions such that $ \varphi_n \overset{C_0[0,T]}{\underset{}{\longrightarrow}} \varphi, $ as $ n \to +\infty. $ We  check that the sequence  $(\mu_{2n}(\cdot|\varphi_n))_{n \in \mathbb{N}}$ obeys a LDP on $ C_0[0,T], $ with the (same) rate function $ J(\cdot|\varphi). $
For every $ n \in \mathbb{N} $, denote by ${X}^{n,\varphi_n} $ the process
$$
X^{n,\varphi_n}_t=x_0+\int_0^t\mu(\varphi_n(s))\,ds -\frac 12 \ep^{2}_n\int_0^t\sigma(\varphi_n(s))^2\, ds + \ep_n\bar \rho\int_0^t \sigma(\varphi_n(s))\, dW_s, \quad 0\leq t \leq T .$$
With the same notation of  Theorem \ref{th:general W-F}, thanks to Remark \ref{rem:continuous} $(i)$,  we have
\begin{itemize}
	\item $ b_n(t)=\mu(\varphi_n(t))-\frac 12 \ep^{2}_n \sigma(\varphi_n(t))^2$, then $b_n(t) \to \mu(\varphi(t)) $, as $ n \to +\infty, $ uniformly for $ t \in [0,T] $;
	\item $ c_n(t)=\bar \rho\,\sigma(\varphi_n(t)) $, then $c_n(t) \to \bar \rho\,\sigma(\varphi(t))$, as $ n \to +\infty, $ uniformly for $ t \in [0,T]. $
\end{itemize}
Therefore
 $ (\mu_{2n}(\cdot|\varphi_n))_{n \in \mathbb{N}} $ obeys a LDP with the inverse speed $ \ep^{2}_n $ and the good rate function $ J(\cdot|\varphi).$

\medskip

$ (c) $ We  check that $ J(\cdot|\cdot) $ is lower semicontinuous as a function of the couple $ (\varphi,x) \in C_0[0,T]^2. $\\
Suppose that
$$  (\varphi_n,x_n) \overset{C_0[0,T]^2}{\underset{n \to +\infty}{\longrightarrow}} (\varphi,x).  $$
If $\liminf_{n \to +\infty} J(x_n|\varphi_n)=\lim_{n \to +\infty} J(x_n|\varphi_n)=+\infty,$ there is nothing to prove. Therefore we can suppose that $(x_n)_{n\in \N}\subset H^1_0[0,T]$ and then
\begin{eqnarray*}
 J(x_n|\varphi_n)&=& \frac 12 \int_0^T\bigg(\frac{\dot{x}_n(t)-\mu(\varphi_n(t))}{\bar\rho\,\sigma(\varphi_n(t))}\bigg)^2 \, dt=\\
 &=&\frac 12 \int_0^T\bigg(\frac{\dot{x}_n(t)-\mu(\varphi_n(t))}{\bar\rho\,\sigma(\varphi(t))}\bigg)^2\cdot\bigg(\frac{\sigma(\varphi(t))}{\sigma(\varphi_n(t))}\bigg)^2 \, dt\geq\\
 &\geq& \inf_{t \in [0,T]} \bigg( \frac{\sigma(\varphi(t))}{\sigma(\varphi_n(t))}\bigg)^2\cdot\frac 12  \int_0^T\bigg(\frac{\dot{x}_n(t)-\mu(\varphi_n(t))}{\bar\rho\,\sigma(\varphi(t))}\bigg)^2 \, dt\\
 &=&\inf_{t \in [0,T]} \bigg( \frac{\sigma(\varphi(t))}{\sigma(\varphi_n(t))}\bigg)^2 J\Big(x_n+\int_0^{\cdot}(\mu(\varphi(s))- \mu(\varphi_n(s))ds\Big|\varphi\Big).
\end{eqnarray*}
Now $x_n+\int_0^{\cdot}(\mu(\varphi(s))- \mu(\varphi_n(s))ds\to x$ in $C_0[0,T]$, as $n\to +\infty$.
Therefore  from the semicontinuity of $J(\cdot|\varphi)$ (it is a rate function) and Remark  \ref{rem:continuous}
the claim follows.
\cvd

The hypotheses of Theorem \ref{th:Chaganty} are fulfilled, so we have the following result.
\begin{proposition} The family
	$$ (\hat{B}^n,X^{n}-x_0)_{n \in \mathbb{N}} $$
satisfies a  WLDP
with the speed $ \ep_n^{-2} $ and the rate function
	$$ I(\varphi,x)=I_{\hat{B}}(\varphi)+J(x|\varphi).  $$
Furthermore $ (X^{n}-x_0)_{n \in \mathbb{N}} $ satisfies a LDP  with the speed function $ {\ep^{-2}_n} $ and the rate function
\begin{eqnarray*}
I_X(x)=\begin{cases} \displaystyle\inf_{f\in H_0^1[0,T]}\left[\frac12 \lVert f\rVert_{H_0^1[0,T]}^2 + \frac12  \int_0^T\Bigg(\frac{\dot{x}(t)-\mu(\hat{f}(t))}{\sigma(\hat{f}(t))} \Bigg)^2 \,dt \right] & x \in H_0^1[0,T]\\
		\displaystyle\phantom{\inf} +\infty & x \notin H_0^1[0,T],
		\end{cases}
		\end{eqnarray*}
	where  $\hat f $ is defined in (\ref{eq:hat-f}).

\end{proposition}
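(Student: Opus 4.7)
The proof splits naturally into three stages: invoking Chaganty's theorem for the joint family, upgrading the resulting weak LDP to a full LDP via the goodness criterion, and finally rewriting the marginal rate function in the explicit form stated.

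First, the WLDP for $(\hat B^n, X^n - x_0)_{n\in\N}$ is an immediate application of Theorem \ref{th:Chaganty}. Hypothesis \textit{(i)} holds with $I_1 = I_{\hat B}$ and speed $\ep_n^{-2}$ by Remark \ref{rem:LDP Volterra}. Hypothesis \textit{(ii)}, namely the LDP continuity condition for the conditional laws with rate $J(\cdot|\varphi)$ from \eqref{eq:cond-rate-function}, is precisely Proposition \ref{prop:LDP continuity condition Uncorrelated}. Chaganty's theorem then yields the WLDP with rate $I(\varphi,x) = I_{\hat B}(\varphi) + J(x|\varphi)$.

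Next, to promote the WLDP to a full LDP and to obtain the marginal LDP for $(X^n - x_0)_{n\in\N}$, I would verify the sufficient condition in Lemma \ref{lemma:good rate function}: for every compact $K_1 \subset C_0[0,T]$ and every $L \geq 0$, the set
$$\mathcal{S} = \bigcup_{\varphi \in K_1} \{x \in C_0[0,T]: J(x|\varphi) \leq L\}$$
is compact in $C_0[0,T]$. Compactness of $K_1$ gives equi-boundedness $\sup_{\varphi\in K_1}\|\varphi\|_\infty \leq M$, hence by Remark \ref{rem:continuous}\,$(ii)$ and continuity of $\mu$ there are constants $0 < \underline\sigma_M \leq \sigma(\varphi(t)) \leq \overline\sigma_M$ and $|\mu(\varphi(t))| \leq \overline\mu_M$ uniformly in $\varphi\in K_1$, $t\in[0,T]$. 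From the bound $J(x|\varphi)\leq L$ one then reads off a uniform $L^2$-bound on $\dot x$, so $\mathcal S$ lies in a ball of $H_0^1[0,T]$, which is relatively compact in $C_0[0,T]$ by Arzel\`a--Ascoli (uniform $\tfrac12$-H\"older continuity). Closedness of $\mathcal S$ then follows by extracting a subsequence $\varphi_{n_k}\to\varphi\in K_1$ for any $x_n\to x$ with $x_n \in \mathcal{S}$ and applying the lower semicontinuity of $J(\cdot|\cdot)$ proved in part $(c)$ of Proposition \ref{prop:LDP continuity condition Uncorrelated}. Lemma \ref{lemma:good rate function} then yields that $I$ is a good rate function, which simultaneously upgrades the WLDP for the joint family to a full LDP and gives the LDP for the marginal $(X^n - x_0)_{n\in\N}$ with good rate function
$$I_X(x) = \inf_{\varphi \in C_0[0,T]} \bigl[I_{\hat B}(\varphi) + J(x|\varphi)\bigr].$$

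Finally, I would identify $I_X$ with the stated formula. If $x \notin H_0^1[0,T]$ then $J(x|\varphi) = +\infty$ for every $\varphi$, giving $I_X(x) = +\infty$. If $x \in H_0^1[0,T]$, I substitute the expression \eqref{eq:volterra rate function} for $I_{\hat B}$ and exchange the two infima: writing $\varphi = \hat f$ for $f\in H_0^1[0,T]$, and noting that $\varphi$ outside the range of $\mathcal K$ contributes $+\infty$ and can be discarded, one gets
$$I_X(x) = \inf_{f \in H_0^1[0,T]} \left[\frac{1}{2}\|f\|^2_{H_0^1[0,T]} + \frac{1}{2}\int_0^T \left(\frac{\dot x(t) - \mu(\hat f(t))}{\bar\rho\, \sigma(\hat f(t))}\right)^2 dt\right],$$
which is the claimed expression.

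The main obstacle is the goodness step: verifying the hypothesis of Lemma \ref{lemma:good rate function}, i.e.\ the uniform control (in $\varphi\in K_1$) of the denominators $\sigma(\varphi(t))$ away from zero and of $\mu(\varphi(t))$ from above, together with the extraction argument used for closedness. Everything else is essentially bookkeeping: Chaganty provides the WLDP, the continuity condition was already established in Proposition \ref{prop:LDP continuity condition Uncorrelated}, and the identification of $I_X$ is a Fubini-type rearrangement of infima.
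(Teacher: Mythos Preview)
Your approach is essentially the same as the paper's: apply Chaganty's Theorem~\ref{th:Chaganty} using Remark~\ref{rem:LDP Volterra} for hypothesis~\textit{(i)} and Proposition~\ref{prop:LDP continuity condition Uncorrelated} for hypothesis~\textit{(ii)}, then identify the marginal rate by substituting the explicit expressions \eqref{eq:volterra rate function} and \eqref{eq:cond-rate-function}.

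One small correction of logic: you write that you need Lemma~\ref{lemma:good rate function} ``to obtain the marginal LDP for $(X^n-x_0)_{n\in\N}$''. In fact Chaganty's theorem as stated already delivers a full LDP for the second marginal with rate $I_X(x)=\inf_{\varphi}I(\varphi,x)$, without any goodness assumption; the goodness criterion is only needed to upgrade the joint WLDP to a full LDP and to conclude that $I_X$ is a \emph{good} rate function. Since the proposition as stated only claims a WLDP for the joint family and an LDP (not a good-rate LDP) for the marginal, the verification of Lemma~\ref{lemma:good rate function} is not required here --- and indeed the paper defers it to a separate lemma immediately following this proposition. Your compactness argument for $\mathcal S$ is correct and matches that lemma, but it belongs to the next step (establishing goodness) rather than to the present proposition.
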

\proof
	Thanks to Remark \ref{rem:LDP Volterra} and Proposition \ref{prop:LDP continuity condition Uncorrelated},  the family $ (\hat{B}^n,X^{n}-x_0)_{n \in \mathbb{N}} $ satisfies the hypotheses of Chaganty's Theorem \ref{th:Chaganty} and therefore  satisfies a WLDP with the speed $ \ep_n^{-2} $ and the rate function
$$ I(\varphi,x)=I_{\hat{B}}(\varphi)+J(x|\varphi),  $$
for $x\in C_0[0,T]$ and $\varphi\in C_0[0,T].$
Furthermore $ (X^{n}-x_0)_{n \in \mathbb{N}} $ satisfies a LDP on $ C_0[0,T] $ with the speed function $ {\ep^{-2}_n} $ and the rate function
$$I_X(x)= \inf_{\varphi\in C_0[0,T]}I(\varphi,x).  $$
From the expressions of the rate functions $ I_{\hat{B}}(\cdot) $ in (\ref{eq:volterra rate function}) and $ J(\cdot|\cdot) $ in (\ref{eq:cond-rate-function}),  the claim follows. \cvd

Now we show that  $ I_X(\cdot) $ is a good rate function and this follows from Lemma \ref{lemma:good rate function}.
\begin{lemma}
	Let $ J: C_0[0,T]\times C_0[0,T] \longrightarrow [0, +\infty] $ be defined in (\ref{eq:cond-rate-function}),
	Then, for any $ L \geq 0 $ and for any compact set $ K_1  \subset C_0[0,T]$,
	$$  \bigcup_{\varphi \in K_1}\{x \in C_0[0,T]: J(x|\varphi)\leq L\} $$
	is a compact subset of $ C_0[0,T]$, 	
therefore  $ I_X(\cdot) $ is a good rate function.
\end{lemma}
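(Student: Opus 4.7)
The plan is to deduce the goodness of $I_X$ from Lemma \ref{lemma:good rate function}, which reduces everything to showing that the set
$$\mathcal{S}_{K_1,L} := \bigcup_{\varphi \in K_1}\{x \in C_0[0,T]: J(x|\varphi) \leq L\}$$
is compact in $C_0[0,T]$. I will do this in two steps: first establish relative compactness via Ascoli--Arzel\`a, then close up the set using the lower semicontinuity of $J$ already proved in part $(c)$ of Proposition \ref{prop:LDP continuity condition Uncorrelated}.

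For relative compactness, I would first exploit that $K_1$, being compact in $C_0[0,T]$, is uniformly bounded: there exists $M>0$ with $\sup_{\varphi \in K_1}\sup_{t \in [0,T]}|\varphi(t)| \leq M$. By continuity of $\mu$ on $[-M,M]$ and by the positivity and continuity of $\sigma$ (Remark \ref{rem:continuous} $(ii)$), there are constants $\bar{\mu}, \bar{\sigma}>0$ and $\underline{\sigma}>0$ with
$$|\mu(\varphi(t))| \leq \bar{\mu}, \qquad 0 < \underline{\sigma} \leq \sigma(\varphi(t)) \leq \bar{\sigma}$$
uniformly for $\varphi \in K_1$ and $t \in [0,T]$. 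For any $x \in \mathcal{S}_{K_1,L}$ with associated $\varphi \in K_1$, the explicit form (\ref{eq:cond-rate-function}) and the lower bound on $\sigma$ give $x \in H_0^1[0,T]$ and
$$\int_0^T (\dot{x}(t) - \mu(\varphi(t)))^2\, dt \leq 2L\,\bar{\rho}^{\,2}\,\bar{\sigma}^{2}.$$
The triangle inequality in $L^2[0,T]$ yields a bound on $\|\dot{x}\|_{L^2[0,T]}$ depending only on $L$, $M$, $T$, $\bar\mu$, $\bar\sigma$, and $\bar\rho$. From Cauchy--Schwarz,
$$|x(t)-x(s)| = \left|\int_s^t \dot{x}(u)\,du\right| \leq \sqrt{t-s}\,\|\dot{x}\|_{L^2[0,T]},$$
I obtain uniform equicontinuity of $\mathcal{S}_{K_1,L}$, while $x(0)=0$ provides uniform boundedness. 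Ascoli--Arzel\`a then gives relative compactness.

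To close the set, I would take $(x_n) \subset \mathcal{S}_{K_1,L}$ with $x_n \to x$ in $C_0[0,T]$ and associated $\varphi_n \in K_1$ satisfying $J(x_n|\varphi_n) \leq L$. Since $K_1$ is compact, a subsequence $\varphi_{n_k}$ converges in $C_0[0,T]$ to some $\varphi \in K_1$. Applying the joint lower semicontinuity of $(\psi,y) \mapsto J(y|\psi)$ from part $(c)$ of Proposition \ref{prop:LDP continuity condition Uncorrelated} to $(\varphi_{n_k},x_{n_k}) \to (\varphi,x)$ gives
$$J(x|\varphi) \leq \liminf_{k \to +\infty} J(x_{n_k}|\varphi_{n_k}) \leq L,$$
so $x \in \mathcal{S}_{K_1,L}$. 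Combined with relative compactness, this proves $\mathcal{S}_{K_1,L}$ is compact, and Lemma \ref{lemma:good rate function} delivers goodness of $I_X$.

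The main technical point is not the Ascoli--Arzel\`a step itself, which is routine once one has a uniform $H^1$ bound on $\dot{x}$, but rather making sure the bounds on $\mu\circ\varphi$ and $\sigma\circ\varphi$ (both upper and strictly positive lower) hold \emph{uniformly} over the compact family $K_1$; this is precisely what Remark \ref{rem:continuous} $(ii)$ provides, and it is what allows both the $H^1$ estimate and the subsequent application of the lower semicontinuity result.
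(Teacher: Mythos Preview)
Your proof is correct and complete. The route differs from the paper's in how relative compactness is obtained. You argue directly via Ascoli--Arzel\`a: the compactness of $K_1$ gives uniform sup-norm bounds on the family $\{\varphi\}$, hence uniform two-sided bounds on $\sigma\circ\varphi$ and $\mu\circ\varphi$, which in turn yield a uniform $H_0^1$ bound on every $x$ in the union; equicontinuity and uniform boundedness follow by Cauchy--Schwarz. The paper instead proceeds purely sequentially: given $(x_n)$ with $x_n\in A_{\varphi_n}^L$, it first extracts a limit $\tilde\varphi$ of $(\varphi_n)$ in $K_1$, then shows by ``straightforward computations'' (essentially the same ratio estimate used in the proof of lower semicontinuity) that $J(x_n|\tilde\varphi)\le M$ for some $M$ independent of $n$; since $J(\cdot|\tilde\varphi)$ is a good rate function, $A_{\tilde\varphi}^M$ is compact and a convergent subsequence of $(x_n)$ can be extracted. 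Both arguments finish identically, using the joint lower semicontinuity from part~$(c)$ to place the limit back in the union. Your approach is more explicit and self-contained (it does not invoke the goodness of $J(\cdot|\varphi)$ as a black box), while the paper's approach keeps the compactness mechanism abstract but hides the needed uniform estimates inside an unproved ``straightforward computations'' line.
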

\proof

	Let  $ K_1 $ be a compact set of $ C_0[0,T] $. For $L\geq 0$  let us define
	$$ A_\varphi^L= \{x \in C_0[0,T]: J(x|\varphi)\leq L\} = \{x \in H_0^1[0,T]: J(x|\varphi)\leq L\}.$$
For every $ \varphi \in K_1 $, $ A_\varphi^L $ is a compact subset of $ C_0[0,T]$ (since $ J(\cdot|\varphi) $ is a good rate function).
If $ (x_n)_{n \in \mathbb{N}} \subset  \bigcup_{\varphi \in K_1} A_\varphi^L $, then, for every $ n \in \mathbb{N}, $ there exists $ \varphi_n \in K_1 $ such that $ x_n \in A_{\varphi_n}^L $ (i.e. $ J(x_n|\varphi_n)\leq L $).
	The sequence $ (\varphi_n)_{n \in \mathbb{N}}$ is contained in  $ K_1$, therefore we can suppose   that $ {\varphi}_n \overset{C_0[0,T]}{\underset{}{\longrightarrow}} \tilde\varphi \in K_1$, as $ n\to +\infty. $
Straightforward computations show that exists $M>0$ such that, for every $n\in \N$,
$$ J(x_n|\tilde\varphi)\leq M.$$
	Therefore  for every $ n \in \N$, $ x_n \in A_{\tilde\varphi}^{M}, $ which is a compact set. Then, up to a subsequence, we can suppose that $ {x_{n}} \to x \in A_{\tilde\varphi}^{M}, $ as $ n \to +\infty. $
	Since $J(\cdot|\cdot)$ is semicontinuous, then
$$J(x|\tilde\varphi)\leq \liminf_{n\to +\infty} J(x_n|\varphi_n)\leq L,$$
i.e.  $x\in  A_{\tilde\varphi}^L\subset\bigcup_{\varphi \in K_1} A_\varphi^L $ and therefore $\bigcup_{\varphi \in K_1} A_\varphi^L$ is compact.
We are in the hypotheses of Lemma \ref{lemma:good rate function}, then $ I(\cdot,\cdot) $ is a good rate function, and also $ I_X(\cdot) $ is a good rate function.\cvd

We summarize the sample path large deviation principle for the process $ (X^{n}-x_0)_{n \in \mathbb{N}} $  in the following theorem.
\begin{theorem}\label{th:main-uncorr}
Under  Assumptions \ref{ass:hp-sigma-mu-I}
	a  large deviation principle with the speed $ \ep^{-2}_n $ and the good rate function
$$ I_X(x)=\begin{cases}\displaystyle \inf_{f\in H_0^1[0,T]}\left[\frac12 \lVert f\rVert_{H_0^1[0,T]}^2 + \frac12  \int_0^T\Bigg(\frac{\dot{x}(t)-\mu(\hat{f}(t))}{\sigma(\hat{f}(t))} \Bigg)^2 \,dt \right]& x \in H_0^1[0,T] \\
		\displaystyle +\infty & x \notin H_0^1[0,T]
		\end{cases}
		$$
	holds for the family $ (X^{n}-x_0)_{n \in \mathbb{N}} $, where for every $ n \in \mathbb{N} $, $ (X_t^{n})_{t \in [0,T]} $ is defined by (\ref{eq:logprice-uncorr-scaled}).
\end{theorem}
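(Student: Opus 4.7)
The plan is to obtain the LDP for $(X^n - x_0)_{n \in \N}$ as a marginal of a joint LDP for the pair $(\hat{B}^n, X^n - x_0)_{n \in \N}$ on $C_0[0,T]^2$, invoking Chaganty's Theorem \ref{th:Chaganty}. I need to verify its two inputs: (i) a marginal LDP for $(\hat{B}^n)_{n \in \N}$ on $C_0[0,T]$ at speed $\ep_n^{-2}$, which is immediate from Theorem \ref{th:LDP couple} together with the contraction principle, giving the rate function $I_{\hat B}$ in Remark \ref{rem:LDP Volterra}; and (ii) the LDP continuity condition for the regular conditional laws $\mu_{2n}(\cdot|\varphi)$ of $X^n - x_0$ given the entire path of $\hat{B}^n$. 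Once both are in place, Chaganty's theorem yields a WLDP for the joint law with rate $I(\varphi,x) = I_{\hat B}(\varphi) + J(x|\varphi)$, and an LDP for the $x$-marginal with rate $I_X(x) = \inf_{\varphi \in C_0[0,T]}\bigl(I_{\hat B}(\varphi) + J(x|\varphi)\bigr)$.

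For (ii), I would exploit that, conditionally on $\hat{B}^n = \varphi$, the process $X^{n,\varphi}$ in \eqref{eq:logprice-uncorr-scaled-cond} is a Gaussian diffusion to which Theorem \ref{th:general W-F} applies with $b_n(t) = \mu(\varphi(t)) - \tfrac12 \ep_n^2 \sigma(\varphi(t))^2$ and $c_n(t) = \bar\rho\,\sigma(\varphi(t))$. Since $\sigma$ is continuous and strictly positive, the non-degenerate reduction of the Freidlin--Wentzell rate function gives
$$J(x|\varphi) = \frac12 \int_0^T\Bigl(\frac{\dot x(t) - \mu(\varphi(t))}{\bar\rho\,\sigma(\varphi(t))}\Bigr)^2\,dt$$
on $H_0^1[0,T]$ and $+\infty$ elsewhere. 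The stability condition $\mu_{2n}(\cdot|\varphi_n) \to $ LDP with rate $J(\cdot|\varphi)$ whenever $\varphi_n \to \varphi$ in $C_0[0,T]$ follows from Remark \ref{rem:continuous}(i) applied to $\mu$ and $\sigma$. Lower semicontinuity of $(\varphi,x) \mapsto J(x|\varphi)$ is the most delicate part: I would bound $J(x_n|\varphi_n)$ below by $\inf_{t} (\sigma(\varphi(t))/\sigma(\varphi_n(t)))^2\, J\bigl(x_n + \int_0^\cdot (\mu(\varphi) - \mu(\varphi_n))\,ds \,\big|\, \varphi\bigr)$, let $n \to \infty$ using Remark \ref{rem:continuous}, and invoke lower semicontinuity of the good rate function $J(\cdot|\varphi)$.

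Substituting $\varphi = \hat f$ for $f \in H_0^1[0,T]$, which is the only way for $I_{\hat B}(\varphi)$ to be finite, and replacing $I_{\hat B}(\hat f)$ with $\tfrac12\|f\|^2_{H_0^1[0,T]}$, converts the infimum defining $I_X$ into the formula in the statement. A short computation shows that, outside $H_0^1[0,T]$, every term in the infimum is $+\infty$, so $I_X(x) = +\infty$ when $x \notin H_0^1[0,T]$.

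The main obstacle I anticipate is upgrading the WLDP produced by Chaganty to a full LDP, which reduces to showing $I_X$ is a good rate function. For this I would apply Lemma \ref{lemma:good rate function}, i.e.\ show that for every compact $K_1 \subset C_0[0,T]$ and every $L \geq 0$, the set $\bigcup_{\varphi \in K_1} \{x : J(x|\varphi) \leq L\}$ is relatively compact in $C_0[0,T]$. Compactness of $K_1$ yields an equi-bound $\sup_{\varphi \in K_1}\|\varphi\|_\infty < \infty$, which by Remark \ref{rem:continuous}(ii) forces uniform positive lower and upper bounds on $\sigma(\varphi(\cdot))$ and a uniform upper bound on $|\mu(\varphi(\cdot))|$; from the explicit form of $J(\cdot|\varphi)$ this means every $x$ in the union lies in a fixed bounded ball of $H_0^1[0,T]$. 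Since that ball is compact in $C_0[0,T]$ by Ascoli, and lower semicontinuity of $J$ closes the union, the required compactness follows and $I_X$ is a good rate function, completing the proof.
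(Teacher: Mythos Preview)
Your proposal is correct and follows essentially the same route as the paper: apply Chaganty's Theorem to $(\hat B^n, X^n-x_0)$, verify the LDP continuity condition for the conditional laws via Theorem \ref{th:general W-F} (including the same lower-semicontinuity estimate you describe), and then upgrade to a full LDP by checking the compactness hypothesis of Lemma \ref{lemma:good rate function}. The only cosmetic difference is that the paper argues compactness of $\bigcup_{\varphi\in K_1}\{J(\cdot|\varphi)\le L\}$ by a direct sequential argument rather than via a uniform $H_0^1$-bound, but the content is the same.
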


\section{LDP for the correlated Stochastic Volatility Model}\label{sect:correlatedSVM}

We now consider a correlated Volterra type stochastic volatility model. The asset price process $ (S_t)_{t \in [0,T]} $ is modeled by the following stochastic differential equation
\begin{eqnarray*}
\begin{cases}
dS_t = S_t \mu(\hat{B}_t)dt + S_t\sigma(\hat{B}_t)d(\bar{\rho}W_t+\rho B_t) \qquad 0\leq t \leq T,\\
S_0=s_0 >0,
\end{cases}
\end{eqnarray*}
where $ \rho \in (-1,1) $ (for $ \rho= 0 $ we have the uncorrelated model).
Like before, let $ Z_t=\log S_t $ $, 0\leq t \leq T, $ be the log-price process defined by (\ref{eq:X-explicit}). We are going to consider the following  process
\begin{equation}\label{eq:logprice-corr-scaled}
Z_t^{n}=x_0+\int_0^t\Big(\mu(\hat{B}^n_s)-\frac 12\ep_n^{2}\sigma( \hat{B}^n_s)^2\Big) ds +  \ep_n \bar{\rho}\int_0^t\sigma( \hat{B}^n_s) dW_s
+ \ep_n \rho\int_0^t\sigma( \hat{B}^n_s) d B_s,
\end{equation}
where $ Z_0^{n}=x_0=\log s_0. $ In this section we want to prove a sample path large deviation principle for the family  $ ((Z^{n}_t-x_0)_{t\in [0,T]})_{n \in \mathbb{N}}.$
 Notice that
$$Z_t^{n}=X_t^n
+ \ep_n \rho\int_0^t\sigma(\ep_n \hat{B}_s) d B_s,$$
where $ (X^n_t)_{t\in[0,T]}$ is defined in (\ref{eq:logprice-uncorr-scaled}).
The study of the correlated model is more complicated than the previous one. In fact, in this case, we should also study the behavior of the process $ (V^n_t)_{t\in[0,T]}$ where
\begin{equation} \label{eq:V_n}V^n_t=\ep_n  \rho\int_0^t\sigma(\ep_n \hat{B}_s)dB_s, \quad 0\leq t\leq T.\end{equation}
Notice that this process depends on the couple $(\ep_n B,\ep_n \hat{B})$, but we can't directly apply  Chaganty's Theorem to the family
$$ ((\ep_n B,\ep_n \hat{B}), Z^{n}-x_0)_{n \in \mathbb{N}} $$
since
$ V^n $ cannot be written as a continuous function of $ (\ep_n B,\ep_n \hat{B}) $ and so the LDP continuity condition is not fulfilled. To overcome this problem, we  introduce a new family of processes $ (Z^{n,m})_{n \in \mathbb{N}} $, where for every $ m \geq 1 $, $V^n$
is replaced by a suitable continuous function  of $ (\ep_n B,\ep_n \hat{B}) $.
Thanks to the results obtained in the previous section, we  prove that the hypotheses of Chaganty's Theorem are fulfilled for the family $ ((\ep_n B,\ep_n \hat{B}), Z^{n,m}-x_0)_{n \in \mathbb{N}} $. Then, for every $ m \geq 1 $, $ (Z^{n,m}-x_0)_{n \in \mathbb{N}} $ satisfies a LDP with a certain good rate function $ I^m$ (Section 6.1). Then, proving that the family $ ((Z^{n,m})_{n \in \mathbb{N}})_{m\in \N} $ is an exponentially good approximation of $ (Z^{n})_{n \in \mathbb{N}} $, we  obtain a large deviation principle for the family $ (Z^{n}-x_0)_{n \in \mathbb{N}} $ with the good rate function obtained in terms of the $ I^m $'s (Section 6.2). Finally (in Section 6.3) we give an explicit expression for the rate function (not in terms of the $ I^m $'s). In Section 6.4 we give an application of the previous results.

In this section we  need some more hypotheses on coefficients $\mu$ and $\sigma$.
\begin{definition}
A modulus of continuity is an increasing function  $\omega:[0,+\infty)\to[0,+\infty)$
such that $\omega(0) = 0 $ and $\lim_{x\to 0 }\omega (x)=0$.
 A function $f$  defined on $\R$  is
called locally $\omega$-continuous, if for every $\delta>0$ there exists a constant $L(\delta)>0$  such that for all
$x,y\in[-\delta, \delta]$, the following inequality holds: $|f(x)-f(y)|\leq L(\delta)\omega(|x-y|)$.
\end{definition}
\begin{remark}\rm
For instance, if $\omega(x)=x^{\alpha}$, $\alpha\in(0,1)$,   the function $f$ is locally $\alpha $-H\"{o}lder continuous.
If $\omega(x)=x$,    the function $f$ is locally Lipschitz continuous.
\end{remark}

Consider the following assumptions.

\begin{assumption}\label{ass:hp-sigma-II}\rm

	$ \sigma: \R \longrightarrow (0,+\infty) $ is a locally $\omega$-continuous function.

\end{assumption}

\begin{assumption}\label{ass:hp-sigma-mu-III}\rm
		 There exist constants $\alpha, M_1,M_2>0, $ such that
		$$ \sigma(x)+ |\mu(x)|\leq M_1+ M_2 \,|x|^\alpha, \quad  x\in \R.$$
			
\end{assumption}

\subsection{LDP for the approximating families}\label{sect:appr-fam}

In this section we suppose that  Assumptions \ref{ass:hp-sigma-mu-I}  are fulfilled.

\medskip

For every $ m\geq1 $, let us define the  functions $ \Psi_m:C_0[0,T]^2\to C_0[0,T] $,
\begin{equation}\label{eq:psi-m}
 \Psi_m(f,g)(t)= \sum_{k=0}^{\left\lfloor \frac{mt}{T}\right\rfloor-1}\sigma\Big( g\Big(\frac km T\Big) \Big)\Big[f \Big(\frac{k+1}m T \Big)- f\Big( \frac km T\Big) \Big]+
\sigma\Big(g\Big(\Big\lfloor \frac{mt}{T}\Big\rfloor\frac Tm\Big) \Big)\Big[f(t)- f\Big( \Big\lfloor\frac{mt}{T}\Big\rfloor\frac Tm\Big) \Big],
\end{equation}
$t\in[0,T]$.
We note that, for every $ m\geq 1 $, $ \Psi_m $ is a continuous function on  $ C_0[0,T]^2 $ (where we are using the sup-norm topology for both arguments of $ \Psi_m $).
\begin{remark}\label{rem:psi-m}
	\rm
		If $ (f,g)\in \cl H_{(B,\hat B)} $, i.e. $ f \in H_0^1[0,T] $ and $ g=\hat{f} $ where $\hat{f} $ is defined in (\ref{eq:hat-f}),
 the function $ \Psi_m $ can be written
		$$ \Psi_m(f,\hat{f})(t)=\int_0^t \sigma\Big(\hat{f}\Big({\Big\lfloor \frac{ms}T\Big\rfloor}\frac Tm\Big)\Big)\dot{f}(s)ds $$
		for $t \in [0,T].  $
		
\end{remark}

In order to establish a LDP for the family $((Z^{n}_t-x_0)_{t\in [0,T]})_{n \in \mathbb{N}},  $ we introduce a new family of processes $ ((Z_t^{n,m})_{t \in [0,T]})_{n \in \mathbb{N}} $, $ m\geq 1 $,
defined by
\begin{equation}\label{eq:logprice-scaled-corr-psi-m}
Z_t^{n,m}= x_0 + \int_0^t \Big(\mu(\ep_n\hat{B}_s)ds -\frac12 \ep_n^{2} \sigma(\ep_n \hat{B}_s)^2 \Big)ds + \ep_n\bar{\rho}\int_0^t\sigma(\ep_n\hat{B}_s)dW_s
+\rho \Psi_m(\ep_n B,\ep_n\hat{B})(t).
\end{equation}
We  prove a large deviation principle for the family $ ((Z_t^{n,m}-x_0)_{t \in [0,T]})_{n \in \mathbb{N}}$ (for  $m\geq 1$), as $ n \to +\infty. $
For this purpose we check that hypotheses of  Theorem \ref{th:Chaganty} hold for the family of processes
$$ ((\ep_nB,\ep_n\hat{B}), Z^{n,m}-x_0)_{n \in \mathbb{N}}. $$
From Theorem \ref{th:LDP couple} we already know that the couple $ ((\ep_nB,\ep_n\hat{B}))_{n \in \mathbb{N}} $ satisfies a large deviation principle on $ C_0[0,T]^2 $ with the inverse speed $ \ep_n^{2} $ and
 the good rate function $ I_{(B,\hat{B})}(\cdot,\cdot) $ given by (\ref{eq:couple rate function}). For fixed $ m\geq1 $ and $ (f,g)\in C_0[0,T]^2  $ our next goal is to prove that the family of conditional processes
$$ {Z}^{n,m,(f,g)} = Z^{n,m}|(\ep_nB_t=f(t), \ep_n\hat{B}_t=g(t) \quad 0\leq t \leq T)  $$
satisfies a large deviation principle.
For every $ (f,g) \in  C_0[0,T]^2  $ and $ t \in [0,T] $ we have
\begin{equation}\label{eq:logprice-scaled-corr-psi-m-cond}
 {Z}_t^{n,m,(f,g)}=x_0+\int_0^t\Big(\mu(g(s))ds-\frac12\ep_n^{2}\sigma(g(s))^2\Big)ds + \ep_n\bar{\rho}\int_0^t\sigma(g(s))dW_s+
  \rho\Psi_m(f,g)(t),
\end{equation}
i.e.
$$  {Z}_t^{n,m,(f,g)}=  X_t^{n, g} +\rho\Psi_m(f,g)(t),$$
where  $(  X_t^{n, g})_{t\in[0,T]}$ is defined  in (\ref{eq:logprice-uncorr-scaled-cond}) and the equalities are to be intended in law.

\begin{proposition}\label{prop:LDP-logprice-scaled-corr-psi-m-cond}
If $ (f,g) \in C_0[0,T]^2 $,
	 then for every  $ m \geq 1 $, $(( {Z}_t^{n,m,(f,g)}- x_0)_{t \in [0,T]})_{n \in \mathbb{N}} $ satisfies a large deviation principle on $ C_0[0,T] $ with the speed $ \ep_n^{-2} $ and the good rate function

\begin{equation}\label{eq:rate-function-logprice-scaled-corr-psi-m-cond} {\cal J}^m(x|(f,g))= J(x - \rho\Psi_m(f,g)|g), \end{equation}
	where $ J(\cdot|g) $ is given by (\ref{eq:cond-rate-function}).
\end{proposition}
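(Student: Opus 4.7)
The plan is to reduce this LDP to the uncorrelated case already handled in Proposition \ref{prop:LDP continuity condition Uncorrelated} via a simple translation argument. Comparing the definition in (\ref{eq:logprice-scaled-corr-psi-m-cond}) with the definition of the conditional uncorrelated process in (\ref{eq:logprice-uncorr-scaled-cond}), I would first observe the identity
$$
Z^{n,m,(f,g)}_t - x_0 = \big(X^{n,g}_t - x_0\big) + \rho\,\Psi_m(f,g)(t), \qquad t\in[0,T].
$$
The crucial point is that, having fixed $(f,g)\in C_0[0,T]^2$ and $m\ge 1$, the function $\rho\,\Psi_m(f,g)$ is a deterministic element of $C_0[0,T]$, independent of $n$; all the randomness (and all the $n$-dependence) is carried by $X^{n,g}-x_0$.

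Next, I would invoke part (a) of Proposition \ref{prop:LDP continuity condition Uncorrelated} applied to the fixed path $g\in C_0[0,T]$: it gives that the family $(X^{n,g}-x_0)_{n\in\mathbb N}$ satisfies a LDP on $C_0[0,T]$ with inverse speed $\varepsilon_n^2$ and good rate function $J(\cdot|g)$ as in (\ref{eq:cond-rate-function}). The translation $\tau:C_0[0,T]\to C_0[0,T]$ defined by $\tau(y)=y+\rho\,\Psi_m(f,g)$ is a homeomorphism (an isometry for the sup-norm) with inverse $\tau^{-1}(x)=x-\rho\,\Psi_m(f,g)$. Since $Z^{n,m,(f,g)}-x_0=\tau(X^{n,g}-x_0)$, the contraction principle (applied with the continuous bijection $\tau$) transfers the LDP: the family $(Z^{n,m,(f,g)}-x_0)_{n\in\mathbb N}$ satisfies a LDP on $C_0[0,T]$ with the same speed $\varepsilon_n^{-2}$ and the good rate function
$$
{\cal J}^m\bigl(x\,\big|\,(f,g)\bigr)
\;=\;\inf\bigl\{J(y|g):\tau(y)=x\bigr\}
\;=\;J\bigl(x-\rho\,\Psi_m(f,g)\,\big|\,g\bigr),
$$
which is precisely (\ref{eq:rate-function-logprice-scaled-corr-psi-m-cond}). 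Goodness of ${\cal J}^m(\cdot|(f,g))$ is inherited from goodness of $J(\cdot|g)$, since $\tau$ is a homeomorphism and preserves compact sublevel sets.

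An equivalent route would be to apply Remark \ref{exp-equiv} directly: $Z^{n,m,(f,g)}-x_0$ is a Gaussian process of the form $m^n+U^n$, where $U^n_t=\varepsilon_n\bar\rho\int_0^t\sigma(g(s))\,dW_s$ is a centered Gaussian family satisfying a LDP (covariance and mean conditions of Theorem \ref{th:ldp-gaussian} hold trivially since $g$ is deterministic and $\sigma\circ g$ is continuous by Remark \ref{rem:continuous}), and $m^n(t)=\int_0^t(\mu(g(s))-\tfrac12\varepsilon_n^2\sigma(g(s))^2)\,ds+\rho\,\Psi_m(f,g)(t)$ converges in $C[0,T]$ to $m(t)=\int_0^t\mu(g(s))\,ds+\rho\,\Psi_m(f,g)(t)$. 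The translated rate function produced by Remark \ref{exp-equiv} coincides, after an elementary computation using $\bar\rho\sigma(g)>0$, with $J(x-\rho\,\Psi_m(f,g)|g)$.

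Given the cleanness of the reduction, I do not anticipate any real obstacle: the content of the statement is essentially that the additive deterministic perturbation $\rho\,\Psi_m(f,g)$ just shifts the rate function by the same amount, so the only thing to verify is that this perturbation is indeed a well-defined, $n$-independent, deterministic element of $C_0[0,T]$ — which follows at once from the continuity of $\Psi_m$ on $C_0[0,T]^2$ noted right after (\ref{eq:psi-m}).
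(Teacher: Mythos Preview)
Your proposal is correct and follows exactly the same approach as the paper: the paper's proof simply says ``Combining Proposition \ref{prop:LDP continuity condition Uncorrelated} and the contraction principle the proof is complete,'' and you have spelled out precisely this argument in detail.
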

\proof
Combining Proposition \ref{prop:LDP continuity condition Uncorrelated}  and the contraction principle the proof is complete. Note that
$ {\cal J}^m(x|(f,g)) $ is finite  when
 $ x-\rho\Psi_m(f,g)\in H_0^1[0,T]. $\cvd

\begin{remark}\label{rem:rate-function-logprice-scaled-corr-psi-m-cond}
	\rm If $ (f,g)\in \cl H_{(B,\hat B)}$ then, we have already seen in Remark \ref{rem:psi-m} that the function $ \Psi_m(f,g) $ can be written as
$$ \Psi_m(f,\hat{f})(t)=\int_0^t \sigma\left(\hat{f}\left({\Big\lfloor \frac{ms}T\Big\rfloor}\frac Tm\right)\right)\dot{f}(s)ds $$
		for $t \in [0,T].  $ Clearly $ \Psi_m(f,\hat{f}) $ is differentiable with a square integrable derivative, i.e. $ \Psi_m(f,\hat{f})\in H_0^1[0,T] .$
Therefore, the rate function $ {\cal J}^m(\cdot|(f,\hat{f})) $ is
		\begin{eqnarray}\label{eq:Jm explicit} {\cal J}^m(x|(f,\hat{f}))=\begin{cases}
		\displaystyle \frac12\int_0^T\Big( \frac{\dot{x}(t)- \mu(\hat{f}(t))-\rho\dot{\Psi}_m(f,\hat{f})(t)}{\bar{\rho}\sigma(\hat{f}(t))} \Big)^2dt & x \in H_0^1[0,T]\\
		+\infty & x \notin H_0^1[0,T].
		\end{cases}
		\end{eqnarray}
	\end{remark}
\begin{proposition}\label{prop:II LDP continuity condition correlated }
	Let $((f_n,g_n))_{n \in \mathbb{N}} \subset C_0[0,T]^2, $ $ (f,g) \in C_0[0,T]^2 $ be functions such that
	$ (f_n,g_n)\overset{C_0[0,T]^2}{\underset{}{\longrightarrow}} (f,g)$.
	 Then, for every $ m\geq 1$, the family of processes $ (( {Z}_t^{n,m,(f_n,g_n)}-x_0)_{t \in [0,T]})_{n \in \mathbb{N}}$, where
	$$  {Z}_t^{n,m,(f_n,g_n)}=\int_0^t\Big(\mu(g_n(s))ds-\frac12\ep_n^{2}\sigma(g_n(s))^2\Big)ds + \ep_n\bar{\rho}\int_0^t\sigma(g_n(s))dW_s+\rho\Psi_m(f_n,g_n)(t)  $$
	satisfies a large deviation principle on $ C_0[0,T] $ with the speed $ \ep_n^{-2} $ and the good rate function ${\cal J}^m(\cdot|(f,g))$ defined in (\ref{eq:rate-function-logprice-scaled-corr-psi-m-cond}).
\end{proposition}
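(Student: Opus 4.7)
The plan is to reduce this claim to the LDP for the uncorrelated conditional family proved in part $(b)$ of Proposition \ref{prop:LDP continuity condition Uncorrelated}, by exploiting the continuity of the map $\Psi_m$ together with a trivial exponential equivalence. The starting point is the decomposition
$$ {Z}_t^{n,m,(f_n,g_n)} - x_0 = (X_t^{n,g_n} - x_0) + \rho\Psi_m(f_n,g_n)(t),$$
where $X^{n,g_n}$ denotes the uncorrelated conditional process defined in (\ref{eq:logprice-uncorr-scaled-cond}) with $\varphi = g_n$.

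Since $g_n \to g$ in $C_0[0,T]$, part $(b)$ of the proof of Proposition \ref{prop:LDP continuity condition Uncorrelated} (an application of Theorem \ref{th:general W-F}) applies verbatim to give that $(X^{n,g_n} - x_0)_{n\in \N}$ satisfies a LDP on $C_0[0,T]$ at speed $\ep_n^{-2}$ with good rate function $J(\cdot|g)$. At the same time, since $\Psi_m:C_0[0,T]^2 \to C_0[0,T]$ is continuous (as noted just after (\ref{eq:psi-m})) and $(f_n,g_n)\to(f,g)$, one has the deterministic uniform convergence $\rho\Psi_m(f_n,g_n) \to \rho\Psi_m(f,g)$ in $C_0[0,T]$.

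I would then apply the contraction principle with the continuous translation map $y\mapsto y+\rho\Psi_m(f,g)$ to transfer the LDP to the auxiliary family $(X^{n,g_n} - x_0 + \rho\Psi_m(f,g))_{n\in \N}$, obtaining speed $\ep_n^{-2}$ and good rate function $y\mapsto J(y-\rho\Psi_m(f,g)|g)={\cal J}^m(y|(f,g))$, exactly as in (\ref{eq:rate-function-logprice-scaled-corr-psi-m-cond}). Finally, the target family $({Z}^{n,m,(f_n,g_n)} - x_0)_{n\in \N}$ differs from this auxiliary family by the deterministic sequence $\rho(\Psi_m(f_n,g_n) - \Psi_m(f,g))$, which tends to $0$ in sup norm; hence for every $\delta>0$ the event that the sup-norm difference exceeds $\delta$ is empty for all sufficiently large $n$, giving exponential equivalence at any speed and in particular at speed $\ep_n^{-2}$. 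The LDP then transfers with the same good rate function ${\cal J}^m(\cdot|(f,g))$. No real obstacle is anticipated: the argument is essentially mechanical once Proposition \ref{prop:LDP continuity condition Uncorrelated}(b) and the continuity of $\Psi_m$ are in hand, and the deterministic nature of the shift reduces the only non-routine step (exponential equivalence) to an almost sure bound for large $n$.
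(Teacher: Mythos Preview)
Your proposal is correct and follows essentially the same route as the paper: both arguments decompose ${Z}^{n,m,(f_n,g_n)}-x_0$ as $X^{n,g_n}-x_0$ plus the deterministic shift $\rho\Psi_m(f_n,g_n)$, invoke part $(b)$ of Proposition~\ref{prop:LDP continuity condition Uncorrelated} for the LDP of the first term, apply the contraction principle for the fixed translation by $\rho\Psi_m(f,g)$, and then use the continuity of $\Psi_m$ to conclude exponential equivalence between the auxiliary and target families (the paper cites Remark~\ref{exp-equiv} for this last step, which is exactly your observation about deterministic shifts converging in sup norm).
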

\begin{proof}
	Since $ g_n \overset{C_0[0,T]}{\underset{}{\longrightarrow}} g, $ as $ n \to +\infty$, from Proposition  \ref{prop:LDP continuity condition Uncorrelated} (condition $(b)$ of LDP continuity condition),  we already know that the family $ (( {X}_t^{n,g_n})_{t \in [0,T]})_{n \in \mathbb{N}} $ satisfies a LDP with the speed $ \ep_n^{-2} $ and the good rate function
	$$ J(x|g)=\begin{cases}
	\displaystyle \frac12\int_0^T\Big( \frac{\dot{x}(t)- \mu(g(t))}{\bar{\rho}\sigma(g(t))}\Big)^2\,dt & x \in H_0^1[0,T]\\
	+\infty &x \notin H_0^1[0,T]
	\end{cases} $$
	Combining this with the contraction principle, we have that
	the family  $$(( {X}_t^{n,g_n}+\rho\Psi_m(f,g)(t))_{t \in [0,T]})_{n \in \mathbb{N}} $$ satisfies a large deviation principle with the speed $ \ep_n^{-2} $ and the good rate function $  {\cal J}^m(x|(f,g)) $ for  $ x \in C_0[0,T]. $
Furthermore, for every $ m\geq1 $
$$ \Psi_m(f_n,g_n)\overset{}{\underset{n \to +\infty}{\longrightarrow}} \Psi_m(f,g) $$
in $C_0[0,T]  $, since $ \Psi_m $ is a continuous function. Therefore the  families
 $ (( {X}_t^{n,g_n}+\rho\Psi_m(f_n,g_n)(t))_{t \in [0,T]})_{n \in \mathbb{N}}$ and
  $  (( {X}_t^{n,g_n}+\rho\Psi_m(f,g)(t)_{t \in [0,T]})_{n \in \mathbb{N}}  $ are  exponentially equivalent (see Remark \ref{exp-equiv})
and  the proof is complete. \cvd
\end{proof}

We now want to prove the lower semicontinuity of $ {\cal J}^m(\cdot|(\cdot,\cdot)). $
\begin{proposition}\label{prop:III LDP continuity condition correlated }
	If $ ((f_n,g_n),x_n)   \overset{}{\underset{n \to +\infty}{\longrightarrow}}    ((f,g),x) $, in $ C_0[0,T]^2\times C_0[0,T] $, then for every $ m\geq 1 $
	$$ \liminf_{n \to +\infty}{\cal J}^m(x_n|(f_n,g_n))\geq {\cal J}^m(x|(f,g)). $$
\end{proposition}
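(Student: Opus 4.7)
The plan is to reduce the claim to the already-established lower semicontinuity of $J(\cdot|\cdot)$ from part $(c)$ of Proposition \ref{prop:LDP continuity condition Uncorrelated}, by exploiting the continuity of the functional $\Psi_m$.

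First I would dispose of the trivial case: if $\liminf_{n\to+\infty}\mathcal{J}^m(x_n|(f_n,g_n))=+\infty$, the inequality holds automatically. Otherwise, by passing to a subsequence (which does not affect the liminf), I may assume that $\mathcal{J}^m(x_n|(f_n,g_n))$ is uniformly bounded and that in fact $\lim_{n\to+\infty}\mathcal{J}^m(x_n|(f_n,g_n))$ exists and equals the liminf.

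Next I would unfold the definition (\ref{eq:rate-function-logprice-scaled-corr-psi-m-cond}), writing
$$\mathcal{J}^m(x_n|(f_n,g_n)) = J\bigl(x_n - \rho\Psi_m(f_n,g_n)\,\bigl|\,g_n\bigr).$$
Since $\Psi_m:C_0[0,T]^2\to C_0[0,T]$ is continuous (as noted after its definition in (\ref{eq:psi-m})), the convergence $(f_n,g_n)\to(f,g)$ in $C_0[0,T]^2$ yields $\Psi_m(f_n,g_n)\to \Psi_m(f,g)$ in $C_0[0,T]$. Combined with $x_n\to x$ in $C_0[0,T]$, this gives
$$x_n - \rho\Psi_m(f_n,g_n) \longrightarrow x - \rho\Psi_m(f,g) \quad \text{in } C_0[0,T],$$
and simultaneously $g_n\to g$ in $C_0[0,T]$. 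Thus in the product space $C_0[0,T]^2$ we have the convergence $\bigl(g_n,\,x_n-\rho\Psi_m(f_n,g_n)\bigr)\to \bigl(g,\,x-\rho\Psi_m(f,g)\bigr)$.

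Finally I would invoke the lower semicontinuity of $J(\cdot|\cdot)$ as a function of both arguments, which is exactly part $(c)$ established in the proof of Proposition \ref{prop:LDP continuity condition Uncorrelated}. This yields
$$\liminf_{n\to+\infty} J\bigl(x_n - \rho\Psi_m(f_n,g_n)\,\bigl|\,g_n\bigr) \ge J\bigl(x - \rho\Psi_m(f,g)\,\bigl|\,g\bigr) = \mathcal{J}^m(x|(f,g)),$$
proving the claim. There is no real obstacle here: the only ingredients are the continuity of $\Psi_m$ (which is elementary from its piecewise structure) and the joint lower semicontinuity of $J$, both already available. The only minor care point is ensuring that the convergence takes place jointly in the conditioning variable $g_n$ and in the first argument $x_n-\rho\Psi_m(f_n,g_n)$, so that the joint lower semicontinuity, rather than a section-wise version, can be applied.
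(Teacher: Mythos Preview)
Your proof is correct and follows essentially the same approach as the paper: unfold $\mathcal{J}^m$ via (\ref{eq:rate-function-logprice-scaled-corr-psi-m-cond}), use the continuity of $\Psi_m$ to obtain convergence of $x_n-\rho\Psi_m(f_n,g_n)$, and then apply the joint lower semicontinuity of $J(\cdot|\cdot)$ from part $(c)$ of Proposition \ref{prop:LDP continuity condition Uncorrelated}. Your preliminary reduction to a subsequence with finite limit is harmless but unnecessary, since the lower semicontinuity inequality for $J$ applies directly regardless of whether the liminf is finite.
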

\begin{proof}
	From Proposition \ref{prop:LDP-logprice-scaled-corr-psi-m-cond} we have
	$${\cal J}^m(x_n|(f_n,g_n))= J(x_n- \rho \Psi_m(f_n,g_n)|g_n). $$
Recall that for every $ m\geq1  $, $ \Psi_m $ is continuous on $ C_0[0,T]^2 $. Therefore, if $ ((f_n,g_n),x_n) \to ((f,g),x) $, as $ n \to +\infty, $ in $ C_0[0,T]^2\times C_0[0,T]$, then
	$$ x_n - \rho \Psi_m(f_n,g_n) \overset{}{\underset{n \to +\infty}{\longrightarrow}} x-\rho \Psi_m(f,g) $$
	in $ C_0[0,T] $. Then, from the lower semicontinuity of $ J(\cdot|\cdot) $ (see  Proposition  \ref{prop:LDP continuity condition Uncorrelated} (condition $(c)$ of LDP continuity condition))
	$$
 \liminf_{n \to +\infty}{\cal J}^m(x_n|(f_n,g_n))=\liminf_{n \to +\infty}J(x_n- \rho \Psi_m(f_n,g_n)|g_n),
 \geq J(x- \rho \Psi_m(f,g)|g)={\cal J}^m(x|(f,g)),
	$$
	which concludes the proof.\cvd
\end{proof}

\begin{proposition}
	For $ m\geq 1 $, the family
	$ ((\ep_nB,\ep_n\hat{B}),Z^{n,m}-x_0 )_{n \in \mathbb{N}} ,$
	where for every $ n \in \mathbb{N}, $ $Z^{n,m}  $	is the process defined by (\ref{eq:logprice-scaled-corr-psi-m})
	 satisfies a WLDP with the speed $ \ep_n^{-2} $ and the rate function
	\begin{equation}\label{eq:H^mjoint} {\cal H}^m((f,g),x)=I_{(B,\hat{B})}(f,g)+{\cal J}^m(x|(f,g))\end{equation}
	for $ x \in C_0[0,T] $ and $ (f,g)\in C_0[0,T]^2 ,$ and $ (Z^{n,m}-x_0 )_{n \in \mathbb{N}} $ satisfies a LDP with the speed $ \ep_n^{-2} $ and the rate function
	\begin{eqnarray}\label{eq:rate-function-logprice-scaled-corr-psi-m}
	I_{Z}^m(x)=\begin{cases}\displaystyle
	\inf_{f \in H_0^1[0,T]}    \left\{  \frac12   \int_0^T    \dot{f}(u)^2\, du  +  \frac12  \int_0^T \Big(\frac{\dot{x}(t) - \mu(\hat{f}(t)) - \rho\dot{\Psi}_m(f,\hat{f})(t)}{\bar{\rho}\sigma(\hat{f}(t))} \Big)^2   dt\right\}
&  x \in H_0^1[0,T]\\
+\infty &x \notin H_0^1[0,T].\end{cases}
	\end{eqnarray}
	
\end{proposition}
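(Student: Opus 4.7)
The plan is to apply Chaganty's Theorem \ref{th:Chaganty} with $E_1 = C_0[0,T]^2$ as the state space of $(\ep_nB,\ep_n\hat{B})$ and $E_2 = C_0[0,T]$ as the state space of $Z^{n,m}-x_0$, taking as $\mu_n$ the joint distribution of $((\ep_nB,\ep_n\hat{B}), Z^{n,m}-x_0)$. The conditional law of the second coordinate given the first is, by construction, the law of ${Z}^{n,m,(f,g)}$ defined in (\ref{eq:logprice-scaled-corr-psi-m-cond}).

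First I would check hypothesis (i) of Theorem \ref{th:Chaganty}: the marginal family $((\ep_nB,\ep_n\hat{B}))_{n\in\N}$ satisfies a LDP on $C_0[0,T]^2$ at speed $\ep_n^{-2}$ with good rate function $I_{(B,\hat{B})}$, which is precisely Theorem \ref{th:LDP couple}. Second I would verify the LDP continuity condition (hypothesis (ii)). Property (a) is immediate from Proposition \ref{prop:LDP-logprice-scaled-corr-psi-m-cond}, where ${\cal J}^m(\cdot|(f,g))$ is obtained as the contraction of a good rate function and hence is itself good. Property (b) for sequences $(f_n,g_n)\to(f,g)$ in $C_0[0,T]^2$ is the content of Proposition \ref{prop:II LDP continuity condition correlated }. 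Property (c), the joint lower semicontinuity of $(f,g,x)\mapsto{\cal J}^m(x|(f,g))$, is Proposition \ref{prop:III LDP continuity condition correlated }.

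Having verified both hypotheses, Chaganty's Theorem yields in one stroke the WLDP for the joint family at speed $\ep_n^{-2}$ with rate ${\cal H}^m$ defined in (\ref{eq:H^mjoint}), together with a LDP for the marginal $(Z^{n,m}-x_0)_{n\in\N}$ with rate function
\[
I_Z^m(x) = \inf_{(f,g)\in C_0[0,T]^2}\bigl\{I_{(B,\hat B)}(f,g)+{\cal J}^m(x|(f,g))\bigr\}.
\]
To turn this into the explicit formula (\ref{eq:rate-function-logprice-scaled-corr-psi-m}), I would use the description of $\cl H_{(B,\hat B)}$ recalled in Remark \ref{rem:RKHS-couple}: $I_{(B,\hat B)}(f,g)=+\infty$ except when $f\in H_0^1[0,T]$ and $g=\hat f$, in which case it equals $\tfrac{1}{2}\int_0^T\dot f(s)^2\,ds$. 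The infimum therefore reduces to $f\in H_0^1[0,T]$ with $g=\hat f$, and on this subspace Remark \ref{rem:rate-function-logprice-scaled-corr-psi-m-cond} gives the explicit form (\ref{eq:Jm explicit}) for ${\cal J}^m(x|(f,\hat f))$; since $\Psi_m(f,\hat f)\in H_0^1[0,T]$, this is finite precisely when $x\in H_0^1[0,T]$. Substituting yields the claimed expression.

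The bulk of the real work is already packaged in the three preceding propositions; the only point that requires a little care in this step is the correct identification of the effective domain of the infimum, namely observing that both contributions to ${\cal H}^m$ simultaneously force $g=\hat f$ with $f\in H_0^1[0,T]$ and $x\in H_0^1[0,T]$. Once this reduction is made, the derivation of the explicit rate function is purely algebraic.
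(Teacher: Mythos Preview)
Your proposal is correct and follows essentially the same route as the paper: verify the hypotheses of Chaganty's Theorem via Theorem \ref{th:LDP couple} and Propositions \ref{prop:LDP-logprice-scaled-corr-psi-m-cond}, \ref{prop:II LDP continuity condition correlated }, \ref{prop:III LDP continuity condition correlated }, then simplify the resulting infimum using the structure of $\cl H_{(B,\hat B)}$ (equation (\ref{eq:couple rate function})) and Remark \ref{rem:rate-function-logprice-scaled-corr-psi-m-cond}. The paper's proof is just a more condensed statement of exactly these steps.
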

\proof
	Thanks to Theorem \ref{th:LDP couple} and Propositions \ref{prop:LDP-logprice-scaled-corr-psi-m-cond}, \ref{prop:II LDP continuity condition correlated } and \ref{prop:III LDP continuity condition correlated }, the family $ ((\ep_nB,\ep_n\hat{B}),Z^{n,m}-x_0 )_{n \in \mathbb{N}} $ satisfies the hypotheses of  Theorem \ref{th:Chaganty}. Therefore $ (Z^{n,m}-x_0 )_{n \in \mathbb{N}} $ satisfies a LDP with the speed $ \ep_n^{-2} $ and the rate function
\begin{equation}\label{eq:I^mZ} I_{Z}^m(x)=\inf_{(f,g)\in C_0[0,T]^2} \left\{I_{(B,\hat{B})}(f,g)+ {\cal J}^m(x|(f,g))\right\}\end{equation}
	for  $x \in C_0[0,T]$. From (\ref{eq:couple rate function})  and Remark \ref{rem:rate-function-logprice-scaled-corr-psi-m-cond} the claim follows.\cvd

We conclude this section showing that, for every $m\geq 1$, $I^m$ is a \it good \rm rate function.
 \begin{proposition}\label{prop:Im good}
	For every $ m \geq 1 ,$ the rate function $ I_Z^m(\cdot) $ defined in (\ref{eq:rate-function-logprice-scaled-corr-psi-m}) is a good rate function.
\end{proposition}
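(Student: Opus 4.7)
The plan is to apply Lemma \ref{lemma:good rate function} to the joint rate function $\mathcal{H}^m((f,g),x) = I_{(B,\hat B)}(f,g) + \mathcal{J}^m(x|(f,g))$ introduced in (\ref{eq:H^mjoint}). Once $\mathcal{H}^m$ is shown to be a good rate function, the representation (\ref{eq:I^mZ}) together with the final assertion of Theorem \ref{th:Chaganty} yields immediately that $I_Z^m$ is good as well. Concretely, I will show that for every $L\ge 0$ and every compact set $K_1\subset C_0[0,T]^2$, the set
\[
A_{K_1}^L = \bigcup_{(f,g)\in K_1}\{x\in C_0[0,T]:\mathcal{J}^m(x|(f,g))\le L\}
\]
is a compact subset of $C_0[0,T]$. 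The argument closely mirrors the one used in the uncorrelated case (the lemma that established goodness of $I_X$), with the extra ingredient that $\Psi_m$ is continuous on $C_0[0,T]^2$.

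The key steps, in order, are as follows. Pick a sequence $(x_n)_{n\in\mathbb{N}}\subset A_{K_1}^L$ and choose $(f_n,g_n)\in K_1$ with $\mathcal{J}^m(x_n|(f_n,g_n))\le L$. By compactness of $K_1$, up to a subsequence $(f_n,g_n)\to(\tilde f,\tilde g)\in K_1$ in $C_0[0,T]^2$. Set $y_n = x_n - \rho\,\Psi_m(f_n,g_n)$; by definition (\ref{eq:rate-function-logprice-scaled-corr-psi-m-cond}) we have $J(y_n|g_n)\le L$. Exactly as in the uncorrelated goodness lemma, using that $\sigma\circ g_n$ is equi-bounded from above and below on the compact family $\{g_n\}\cup\{\tilde g\}$ (thanks to Remark \ref{rem:continuous} $(ii)$ and the positivity of $\sigma$), and that $\mu\circ g_n \to \mu\circ\tilde g$ uniformly, one obtains a constant $M>0$ (depending only on $K_1$ and $L$) such that $J(y_n|\tilde g)\le M$ for all $n$. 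Since $J(\cdot|\tilde g)$ is a good rate function by Proposition \ref{prop:LDP continuity condition Uncorrelated}$(a)$, the level set $\{J(\cdot|\tilde g)\le M\}$ is compact in $C_0[0,T]$ and hence, up to a further subsequence, $y_n\to \tilde y$ in $C_0[0,T]$.

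Now continuity of $\Psi_m$ gives $\Psi_m(f_n,g_n)\to \Psi_m(\tilde f,\tilde g)$ uniformly, so $x_n = y_n + \rho\,\Psi_m(f_n,g_n)\to \tilde x := \tilde y + \rho\,\Psi_m(\tilde f,\tilde g)$ in $C_0[0,T]$. The lower semicontinuity property of $\mathcal{J}^m$ established in Proposition \ref{prop:III LDP continuity condition correlated } finally yields
\[
\mathcal{J}^m(\tilde x|(\tilde f,\tilde g)) \le \liminf_{n\to+\infty}\mathcal{J}^m(x_n|(f_n,g_n)) \le L,
\]
so that $\tilde x\in A_{K_1}^L$ and hence $A_{K_1}^L$ is compact. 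By Lemma \ref{lemma:good rate function}, $\mathcal{H}^m$ is a good rate function, and therefore so is its marginal $I_Z^m$.

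The step I expect to require the most care is the uniform bound $J(y_n|\tilde g)\le M$: it relies on controlling the ratio $\sigma(\tilde g(\cdot))/\sigma(g_n(\cdot))$ uniformly in $n$ and $t\in[0,T]$, and on handling the change from $\mu(g_n)$ to $\mu(\tilde g)$ inside the integrand, which in turn forces a translation of $y_n$ by $\int_0^\cdot(\mu(\tilde g(s))-\mu(g_n(s)))\,ds$ that tends to zero uniformly. All the remaining estimates are routine once that uniform control is in place.
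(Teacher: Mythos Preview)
Your proposal is correct and follows essentially the same approach as the paper. The paper also establishes compactness of the sets $\bigcup_{(f,g)\in K_1}\{x:\mathcal{J}^m(x|(f,g))\le L\}$ via the same extraction argument (pass to a convergent subsequence of $(f_n,g_n)$, transfer the bound on $\mathcal{J}^m$ to a fixed conditioning, extract again, then invoke lower semicontinuity), with the only organizational difference that the paper restricts $K_1$ to be a level set of $I_{(B,\hat B)}$ and then argues directly that the level sets of $\mathcal{H}^m$ are closed subsets of the compact product $K_1\times\bigcup_{(f,g)\in K_1}A_{(f,g)}^L$, whereas you treat arbitrary compact $K_1$ and appeal to Lemma~\ref{lemma:good rate function}.
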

For the proof of this proposition, we need the following lemma.
\begin{lemma}\label{lemma:good rate}
	Let $ {\cal J}^m: C_0[0,T]^2 \times C_0[0,T] \longrightarrow [0, +\infty] $ be the rate function defined in (\ref{eq:Jm explicit}).
	Then, the set
	$$ \displaystyle\bigcup_{(f,g) \in K_1}\{x \in C_0[0,T]: {\cal J}^m(x|(f,g))\leq L\} $$
	is a compact subset of $ C_0[0,T] $ for any $ L \geq 0 $ and for any (compact) level set $K_1 $ of the  (good) rate function $I_{(B,\hat{B})}(\cdot,\cdot)  $ defined in (\ref{eq:couple rate function}).	
\end{lemma}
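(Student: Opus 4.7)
The plan is a standard Arzelà–Ascoli plus lower semicontinuity argument, using the fact that $K_1$ forces the Cameron–Martin norms of the underlying $f$'s to be bounded, which in turn forces the candidate $x$'s to live in a uniformly bounded subset of $H_0^1[0,T]$.

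First, I would unpack $K_1$. Since $K_1$ is a (compact) level set of $I_{(B,\hat B)}$, every element has the form $(f,\hat f)$ with $f \in H_0^1[0,T]$ and $\tfrac12\|f\|_{H_0^1}^2 \le L'$ for some $L'>0$ depending on $K_1$. In particular there is a constant $\hat L>0$ such that $\sup_{(f,\hat f)\in K_1}\|\hat f\|_\infty \le \hat L$ (Remark \ref{rem:hat-f}). By Remark \ref{rem:continuous}(ii), applied to the continuous functions $\sigma>0$ and $|\mu|$, there exist constants $0<\underline\sigma\le\overline\sigma$ and $\overline\mu$ such that for every $(f,\hat f)\in K_1$ and every $t\in[0,T]$,
$$\underline\sigma\le\sigma(\hat f(t))\le\overline\sigma,\qquad |\mu(\hat f(t))|\le\overline\mu.$$

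Next, pick any sequence $(x_n)$ in the set under consideration and companion pairs $(f_n,\hat f_n)\in K_1$ with ${\cal J}^m(x_n|(f_n,\hat f_n))\le L$. The finiteness of ${\cal J}^m$ forces $x_n\in H_0^1[0,T]$, and by (\ref{eq:Jm explicit}),
$$\int_0^T\bigl(\dot x_n(t)-\mu(\hat f_n(t))-\rho\,\dot\Psi_m(f_n,\hat f_n)(t)\bigr)^2dt\le 2\,\bar\rho^{\,2}\,\overline\sigma^{\,2}\,L.$$
Since $\dot\Psi_m(f_n,\hat f_n)(s)=\sigma\bigl(\hat f_n(\lfloor ms/T\rfloor T/m)\bigr)\dot f_n(s)$ (Remark \ref{rem:psi-m}) is bounded in $L^2$ by $\overline\sigma\sqrt{2L'}$, and $\mu(\hat f_n)$ is bounded by $\overline\mu$ in sup-norm, the triangle inequality gives a uniform bound on $\|\dot x_n\|_{L^2[0,T]}$. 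Combined with $x_n(0)=0$ and the Cauchy–Schwarz estimate $|x_n(t)-x_n(s)|\le\|\dot x_n\|_{L^2}|t-s|^{1/2}$, the family $(x_n)$ is equibounded and equicontinuous.

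By compactness of $K_1$ and Arzelà–Ascoli, we may pass to a subsequence along which $(f_n,\hat f_n)\to(f,\hat f)\in K_1$ in $C_0[0,T]^2$ and $x_n\to x$ in $C_0[0,T]$. Applying Proposition \ref{prop:III LDP continuity condition correlated } (lower semicontinuity of ${\cal J}^m(\cdot|(\cdot,\cdot))$) yields
$${\cal J}^m(x|(f,\hat f))\le\liminf_{n\to+\infty}{\cal J}^m(x_n|(f_n,\hat f_n))\le L,$$
so $x\in\bigcup_{(f',g')\in K_1}\{x'\in C_0[0,T]:{\cal J}^m(x'|(f',g'))\le L\}$, proving compactness. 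The only mildly delicate point is verifying the uniform $L^2$ bound on $\dot\Psi_m(f_n,\hat f_n)$, but this is immediate once one observes that on $K_1$ the $\sigma$-factor in its explicit formula is bounded by $\overline\sigma$ and $\|\dot f_n\|_{L^2}$ is bounded by $\sqrt{2L'}$.
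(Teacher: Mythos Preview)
Your proof is correct and follows essentially the same route as the paper's: take a sequence $(x_n)$ in the union with companions $(f_n,\hat f_n)\in K_1$, extract convergent subsequences, and conclude via the lower semicontinuity of ${\cal J}^m$ established in Proposition~\ref{prop:III LDP continuity condition correlated }. The only minor difference is how relative compactness of $(x_n)$ is obtained: you bound $\|\dot x_n\|_{L^2}$ directly from the explicit form of ${\cal J}^m$ and invoke Arzel\`a--Ascoli, whereas the paper first passes to the limit pair $(f,\hat f)$, argues that ${\cal J}^m(x_n|(f,\hat f))\le M$ for some $M$ (the ``straightforward computations'' it leaves to the reader), and then uses that the sublevel set $A_{(f,\hat f)}^M$ is compact because ${\cal J}^m(\cdot|(f,\hat f))$ is a good rate function. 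Your argument is in effect a more explicit version of that omitted computation.
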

\proof
	Let $ K_1 $ be a level set of $ I_{(B,\hat{B})}(\cdot,\cdot) $.  For $L\geq 0$ and $ (f,g) \in K_1 $ define
	\begin{equation}\label{eq:AL} A_{(f,g)}^L= \{x \in C_0[0,T]: {\cal J}^m(x|(f,g))\leq L\}.
	\end{equation}
For every $ (f,g) \in K_1 $, $ A_{(f,g)}^L $ is a compact subset of $ C_0[0,T], $ since $ {\cal J}^m(\cdot|(f,g)) $ is a good rate function.
From the expression of the rate function $ I_{(B,\hat{B})}(\cdot,\cdot) $, we can deduce that $ K_1\ \subset  \mathscr{H}_{(B,\hat B)} $, where $ \mathscr{H}_{(B,\hat B)} $ is defined in (\ref{eq:H2}). Therefore, for every $ (f,g)\in K_1 $, we have that $ g=\hat{f} $ where $ \hat{f} $ is defined in (\ref{eq:hat-f}) and $ {\cal J}^m(\cdot|(f,\hat{f})) $ is given by (\ref{eq:Jm explicit}).
Consider a sequence $ (x_n)_{n \in \mathbb{N}} \subset \displaystyle\bigcup_{(f,g) \in K_1} A_{(f,g)}^L $.Then, for every $ n \in \mathbb{N}, $ there exists $ (f_n,g_n) \in K_1 $ such that $ x_n \in A_{(f_n,g_n)}^L $ (i.e. $ {\cal J}^m(x_n|(f_n,g_n))\leq L $). Then, $ ((f_n,g_n))_{n \in \mathbb{N}} \subset K_1$ and therefore, up to a subsequence, we can suppose that $ (f_n,g_n) \overset{C_0[0,T]^2}{\underset{}{\longrightarrow}} (f,g) \in K_1$, as $ n \to +\infty. $
	Straightforward computations show that there exists a constant $ M>0 $ such that
	$${\cal J}^m(x_n|(f,g))\leq M \quad \mbox{ for every } n \in \mathbb{N}.$$
	Therefore
$ (x_n)_{n \in \mathbb{N}} \subset A_{(f,g)}^M$, where $ A_{(f,g)}^M $ is the compact set defined in (\ref{eq:AL}).  Then, up to a subsequence, we can suppose that $ x_n \overset{C_0[0,T]}{\underset{}{\longrightarrow}} x \in A_{(f,g)}^M$, as $ n \to +\infty$. Furthermore  $ x\in A_{(f,g)}^L $ since, from Proposition \ref{prop:III LDP continuity condition correlated },
	$$ {\cal J}^m(x|(f,g))\leq \displaystyle\liminf_{n \to+\infty}{\cal J}^m(x_n|(f_n,g_n))\leq L. $$
	 Therefore
	$ \displaystyle\bigcup_{(f,g) \in K_1} A_{(f,g)}^L $ is a compact subset of $ C_0[0,T], $ for any $ L\geq 0 $ and for any level set $ K_1 $ of $ I_{(B,\hat{B})}(\cdot,\cdot) $.
\cvd

{\sl Proof of Proposition \ref{prop:Im good}. }
	From the contraction principle,  Proposition \ref{prop:Im good} will be stated if we show that the rate function $ {\cal H}^m((\cdot,\cdot),\cdot), $ defined in (\ref{eq:H^mjoint}), is a good rate function.
	For $ L\geq 0 $  we prove that
	$$\begin{array}{ccl}
	M_L&=&\{((f,g),x)\in C_0[0,T]^2\times C_0[0,T]: {\cal H}^m((f,g),x)\leq L\}\\
	&=& \{((f,g),x)\in\mathscr{H}_{(B,\hat B)}\times  H_0^1[0,T]: I_{(B,\hat{B})}(f,g)+{\cal J}^m(x|(f,g))\leq L\}
	\end{array}$$
	is a compact subset of $  C_0[0,T]^2\times C_0[0,T] $.
	Note that $ M_L $ is a closed subset of $ C_0[0,T] \times C_0[0,T]^2$ since $ {\cal H}^m(\cdot,(\cdot,\cdot)) $ is lower semicontinuous. Set $ K_1=\{(f,g)\in \mathscr{H}_{(B,\hat B)}: I_{(B,\hat{B})}(f,g)\leq L\}. $ It is easy to verify that
	$$  M_L \subset K_1 \times \bigcup_{(f,g) \in K_1}\{x\in C_0[0,T]:{\cal J}^m(x|(f,g))\leq L\}.  $$
	$ K_1$ is compact since it is a level  set of $ I_{(B,\hat{B})}(\cdot,\cdot) $. Then the set on the right hand side is compact from Lemma \ref{lemma:good rate}. Thus $ M_L $ is a  compact set being a closed subset of a compact set. This completes the proof.
\cvd

We summarize the results we have proved for the family $ ((Z_t^{n,m}-x_0)_{t\in[0,T]})_{n \in \mathbb{N}} $ (for $ m\geq 1 $) in the following
theorem. \begin{theorem}\label{th:LDP-approx}
	Suppose $ \sigma $ and $ \mu $  satisfy Assumption \ref{ass:hp-sigma-mu-I}. 	
For every $ m \geq 1 $, a  large deviation principle with the speed $ \varepsilon^{-2}_n $ and the good rate function $ I^m_Z(\cdot) $ given by (\ref{eq:rate-function-logprice-scaled-corr-psi-m})
	holds for the family $ ((Z_t^{n,m}-x_0)_{t \in [0,T]})_{n \in \mathbb{N}} $.\end{theorem}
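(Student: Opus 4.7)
The plan is to apply Chaganty's Theorem \ref{th:Chaganty} to the pair $(E_1,E_2)=(C_0[0,T]^2,C_0[0,T])$, taking $\mu_{1n}$ to be the law of $(\varepsilon_n B,\varepsilon_n\hat B)$ and $\mu_{2n}(\cdot|(f,g))$ the conditional law of $Z^{n,m,(f,g)}-x_0$ given in (\ref{eq:logprice-scaled-corr-psi-m-cond}), at speed $\varepsilon_n^{-2}$. Condition (i) of Chaganty (an LDP for $\mu_{1n}$ with good rate $I_{(B,\hat B)}$) is Theorem \ref{th:LDP couple}. Condition (ii) breaks into the three clauses (a), (b), (c) of the LDP continuity condition, each of which has been established in Section \ref{sect:appr-fam}: Proposition \ref{prop:LDP-logprice-scaled-corr-psi-m-cond} supplies (a) (${\cal J}^m(\cdot|(f,g))$ is a good rate function on $C_0[0,T]$), Proposition \ref{prop:II LDP continuity condition correlated } supplies (b) (LDP stability along convergent $(f_n,g_n)\to(f,g)$), and Proposition \ref{prop:III LDP continuity condition correlated } supplies (c) (joint lower semicontinuity of ${\cal J}^m$). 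Chaganty's Theorem then yields a WLDP for the joint family $((\varepsilon_n B,\varepsilon_n\hat B),Z^{n,m}-x_0)_{n}$ with rate ${\cal H}^m$ as in (\ref{eq:H^mjoint}), and simultaneously a full LDP for the marginal $(Z^{n,m}-x_0)_{n}$ with rate
$$I_Z^m(x)=\inf_{(f,g)\in C_0[0,T]^2}\bigl\{I_{(B,\hat B)}(f,g)+{\cal J}^m(x|(f,g))\bigr\}.$$

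To identify this with the explicit expression (\ref{eq:rate-function-logprice-scaled-corr-psi-m}), I would use that by (\ref{eq:couple rate function}) and (\ref{eq:H2}) the rate $I_{(B,\hat B)}(f,g)$ is infinite off $\cl H_{(B,\hat B)}$, so the infimum reduces to $f\in H_0^1[0,T]$ with $g=\hat f$ defined in (\ref{eq:hat-f}); there $I_{(B,\hat B)}(f,\hat f)=\tfrac12\int_0^T\dot f(s)^2\,ds$. On this restricted set, Remark \ref{rem:rate-function-logprice-scaled-corr-psi-m-cond} gives the explicit form of ${\cal J}^m(x|(f,\hat f))$ in terms of $\dot\Psi_m(f,\hat f)$, producing exactly the integrand in (\ref{eq:rate-function-logprice-scaled-corr-psi-m}). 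If $x\notin H_0^1[0,T]$, then since $\Psi_m(f,\hat f)\in H_0^1[0,T]$ one has $x-\rho\Psi_m(f,\hat f)\notin H_0^1[0,T]$, hence ${\cal J}^m(x|(f,\hat f))=+\infty$ for every admissible $f$, and consequently $I_Z^m(x)=+\infty$, as claimed.

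The only substantive remaining step, and the main technical obstacle, is to upgrade $I_Z^m$ from a bare rate function to a \emph{good} rate function; this is the content of Proposition \ref{prop:Im good}. Following that route I would invoke Lemma \ref{lemma:good rate function} to certify that the joint rate ${\cal H}^m$ is good: its sufficient condition requires that $\bigcup_{(f,g)\in K_1}\{x:{\cal J}^m(x|(f,g))\leq L\}$ be compact in $C_0[0,T]$ for every $L\geq 0$ and every compact $K_1\subset C_0[0,T]^2$. Applying this with $K_1$ a level set of the good rate function $I_{(B,\hat B)}$ is precisely Lemma \ref{lemma:good rate}. Once ${\cal H}^m$ is known to be good, the concluding clause of Chaganty's Theorem \ref{th:Chaganty} transfers goodness to $I_Z^m$ (equivalently, the contraction principle applied to the projection onto the second coordinate), completing the proof of Theorem \ref{th:LDP-approx}.
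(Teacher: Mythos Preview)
Your proposal is correct and follows essentially the same route as the paper: Chaganty's Theorem applied to $((\varepsilon_n B,\varepsilon_n\hat B),Z^{n,m}-x_0)$ via Theorem \ref{th:LDP couple} and Propositions \ref{prop:LDP-logprice-scaled-corr-psi-m-cond}--\ref{prop:III LDP continuity condition correlated }, identification of $I_Z^m$ through (\ref{eq:couple rate function}) and Remark \ref{rem:rate-function-logprice-scaled-corr-psi-m-cond}, and goodness via Proposition \ref{prop:Im good} and Lemma \ref{lemma:good rate}. The only cosmetic difference is that the paper proves goodness of ${\cal H}^m$ directly (showing its sublevel sets are closed subsets of compact sets built from Lemma \ref{lemma:good rate}) rather than literally invoking Lemma \ref{lemma:good rate function}, but the content is identical.
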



\subsection{LDP for the log-price processes}\label{sect:correlatedSVM-notid}

In this section we suppose that Assumptions \ref{ass:hp-sigma-mu-I} and \ref{ass:hp-sigma-II} are fulfilled.

 Theorem \ref{th:LDP-approx} provides a LDP for the families $ (Z^{n,m}-x_0)_{n \in \mathbb{N}} $ for every $ m\geq1, $ but our goal is to get a LDP for the family $ (Z^{n}-x_0)_{n \in \mathbb{N}} $.
 Then, we  prove that the sequence of processes $ ((Z^{n,m}-x_0)_{n \in \mathbb{N}})_{m \geq 1} $ is an exponentially good approximation of $ (Z^{n}-x_0)_{n \in \mathbb{N}} $.
 Let us give the definition of exponentially good approximations. The main reference  for this section is \cite{Bax-Jai}.

\begin{definition}
	Let $ (E,d_E) $ be a metric space and for $\delta>0$, define  $ \Gamma_\delta=\{(\tilde{x},x):d_E(\tilde{x},x)>\delta\} \subset E\times E. $  For each $ n\in\N $ and  $ m \in\N $, let $ (\Omega,\mathscr{F}^n,\mathbb{P}^{n,m}) $ be a probability space, and let the $ E $-valued random variables $ Z^n $ and $ Z^{n,m} $ be distributed according to the joint law $ \mathbb{P}^{n,m} $, with marginals $ {\mu}^{n} $ and $ \mu^{n,m} $ respectively. The families $ (Z^{n,m} )_{n\in \N}$ for $m\geq 1$ are called   \textbf{exponentially good approximations} of $( {Z}^{n})_{n\in \N} $ at the speed $\gamma_n$ if, for every $ \delta>0 $, the set $ \{\omega: ({Z}^{n}, Z^{n,m})\in \Gamma_\delta \} $ is $ \mathscr{F}^{n} $-measurable and
	$$  \lim_{m \to +\infty} \limsup_{n\to +\infty}\gamma_n^{-1} \log \mathbb{P}^{n,m}(\Gamma_\delta)= -\infty. $$
	Similarly, the measures $ (\mu^{n,m})_{n\in \N} $ for $m\geq 1$  are exponentially good approximations of $ (\mu^{n})_{n\in \N} $ if one can construct probability spaces $ (\Omega,\mathscr{F}^n,\mathbb{P}^{n,m}) $ as above.
\end{definition}

Next theorem, Theorem 3.11 in \cite{Bax-Jai}, states that under a suitable condition if for each $ m\geq 1 $ the sequence $ (\mu^{n,m})_{n\in \N} $ satisfies a large deviation principle with the rate function $ I^m ,$ then also $ (\mu^{n})_{n\in \N} $  satisfies a large deviation principle with the rate function $ I$,  obtained in terms of the $ I^m $'s.

\begin{theorem}\label{th:exponentially good approximations}{\rm[Theorem 3.11 in \cite{Bax-Jai}]}
	Assume that $( E, \mathscr{B}(E)) $ is a Polish space  and that for each $ m\geq 1, $ $ (\mu^{n,m})_{n\in \N} $ satisfies a LDP with the speed $ \gamma_n $ and the good rate function $ I^m. $ Let $ (\mu^{n})_{n\in \N}$ be a family of probability measures. For every $ \delta>0 $ define
	$$\rho_\delta(\mu^{n,m},\mu^{n})=\inf_{\ep>0}\Big\{ \mu^{n,m}(A)\leq \mu^{n}(A^\delta)+\ep, \quad A \in \mathscr{B}(E) \Big\},$$
	where
	\begin{equation}\label{eq:Adelta} A^\delta=\bigcup_{x \in A}B_\delta(x),\quad {\mbox{and}}\quad B_\delta(x)=\{y \in E: d_E(x,y)<\delta\},\end{equation}
	with $ d_E $ the metric on $ E$. If for every $ \delta>0 $
	\begin{eqnarray}\label{eq:exponentially good approximations BJ}
	 \lim_{m \to +\infty}\limsup_{n\to+\infty}\gamma_n^{-1}\log\rho_\delta(\mu^{n,m},\mu^{n})=-\infty,
	 \end{eqnarray}
	then $(\mu^n)_{n\in \N} $ satisfies a LDP with the speed $ \gamma_n $ and the good rate function $ I $ given by
	$$ I(x)=\underline{I}(x)=\overline{I}(x), $$
	where
	$$ \underline{I}(x)=\lim_{\delta \to 0}\liminf_{m \to +\infty}\inf_{y \in B_\delta(x)}I^m(y), \quad
	 \overline{I}(x)=\lim_{\delta \to 0}\limsup_{m \to +\infty}\inf_{y \in B_\delta(x)}I^m(y). $$
	\end{theorem}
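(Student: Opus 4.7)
The plan is to establish the LDP for $(\mu^n)$ by separately proving the upper bound on closed sets and the lower bound on open sets, reading off the rate function $I$ from the two estimates, and finally checking that $\underline I=\overline I$ and that this common value is a good rate function.

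For the upper bound, fix a closed set $F$ and $\delta>0$. The key observation is that the hypothesis (\ref{eq:exponentially good approximations BJ}) together with the definition of $\rho_\delta$ yields an inequality of the form
$$\mu^n(F)\leq \mu^{n,m}(\overline{F^\delta})+\rho_\delta(\mu^{n,m},\mu^n),$$
where $F^\delta$ is the open $\delta$-neighbourhood of $F$ as in (\ref{eq:Adelta}) (by symmetry of the coupling one also gets the analogous bound with $\mu^n$ and $\mu^{n,m}$ interchanged, which is what one actually uses). Applying $\limsup_n \gamma_n^{-1}\log(\cdot)$ and using the elementary estimate $\limsup \gamma_n^{-1}\log(a_n+b_n)\leq \max(\limsup\gamma_n^{-1}\log a_n,\limsup\gamma_n^{-1}\log b_n)$, together with the LDP upper bound for the closed set $\overline{F^\delta}$ at level $m$, gives
$$\limsup_n \gamma_n^{-1}\log\mu^n(F)\leq \max\Bigl(-\inf_{y\in \overline{F^\delta}}I^m(y),\ \limsup_n \gamma_n^{-1}\log\rho_\delta(\mu^{n,m},\mu^n)\Bigr).$$
Letting $m\to\infty$ sends the second argument of the max to $-\infty$ by (\ref{eq:exponentially good approximations BJ}), so one is left with $-\limsup_m \inf_{y\in\overline{F^\delta}}I^m(y)$; finally sending $\delta\to 0$ and using the relation $F^\delta=\bigcup_{x\in F}B_\delta(x)$ to convert $\inf_{y\in F^\delta}$ into $\inf_{x\in F}\inf_{y\in B_\delta(x)}$ produces the required bound $-\inf_{x\in F}\overline I(x)$.

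For the lower bound on an open $G$, pick any $x\in G$ and $\delta>0$ with $B_{2\delta}(x)\subset G$. The same kind of coupling estimate yields
$$\mu^n(G)\ \geq\ \mu^n(B_{2\delta}(x))\ \geq\ \mu^{n,m}(B_\delta(x))-\rho_\delta(\mu^{n,m},\mu^n),$$
and applying the LDP lower bound for $\mu^{n,m}$ on the open ball $B_\delta(x)$, followed by $m\to\infty$ and $\delta\to 0$, gives $\liminf_n\gamma_n^{-1}\log\mu^n(G)\geq -\underline I(x)$; taking the supremum over $x\in G$ finishes the lower bound. The coincidence $\underline I=\overline I$ is forced by the combination of the two bounds (tested on small balls around a point), and the goodness of $I$ is derived from the goodness of each $I^m$ together with exponential tightness of $(\mu^n)$, which is inherited from the tightness of the approximations via the same $\delta$-neighbourhood argument.

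The main technical obstacle is the careful bookkeeping of the iterated limits $\lim_{\delta\to0}$, $\liminf_m$ (or $\limsup_m$) and $\inf_{x\in F}$, since the natural inequalities for interchanging $\inf$ with $\limsup$ go in the wrong direction. The trick is to perform the exchange only after $\delta\to 0$, exploiting monotonicity in $\delta$ (the quantity $\inf_{y\in B_\delta(x)}I^m(y)$ is non-decreasing as $\delta\downarrow 0$) so that $\sup_\delta$ can be pulled through $\inf_{x\in F}$. Once this is correctly set up, the identification of $I$ with the prescribed formula and the verification that $I$ is a good rate function follow from standard LDP arguments, essentially as in the proof of Theorem 4.2.16 in \cite{Dem-Zei}.
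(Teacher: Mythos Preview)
The paper does not prove this theorem: it is quoted verbatim as Theorem 3.11 of Baxter--Jain \cite{Bax-Jai} and used as a black box, so there is no ``paper's own proof'' to compare against. Your outline is the standard one and is essentially how Baxter--Jain (and the closely related Theorem 4.2.16 in \cite{Dem-Zei}) proceed: couple $\mu^n$ to $\mu^{n,m}$ up to a $\delta$-neighbourhood error, use the LDP for $\mu^{n,m}$ on the enlarged set, then send $m\to\infty$ and $\delta\to 0$.

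Two small points worth tightening. First, the appeal to ``symmetry of the coupling'' is imprecise: the hypothesis \eqref{eq:exponentially good approximations BJ} is one-sided in $\rho_\delta$. The inequality $\mu^n(F)\le \mu^{n,m}(\overline{F^\delta})+\rho_\delta(\mu^{n,m},\mu^n)$ that you need for the upper bound does follow from the one-sided condition, but by complementation (apply the defining inequality with $A=E\setminus \overline{F^\delta}$ and check that $(E\setminus \overline{F^\delta})^\delta\subset E\setminus F$), not by any symmetry. Second, the interchange you flag --- passing from $\sup_\delta\limsup_m \inf_{y\in F^\delta}I^m(y)$ to $\inf_{x\in F}\overline I(x)$ --- genuinely goes the wrong way under the naive inequalities, and monotonicity in $\delta$ alone does not repair it. In Baxter--Jain this step is handled via exponential tightness (which you correctly note is inherited by $(\mu^n)$) together with a compactness/subsequence argument on the level sets of the $I^m$; your sketch gestures at this but does not carry it out. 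With those two points made rigorous, your argument is complete and matches the cited source.
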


\begin{proposition}\label{prop:identification I}{\rm[Proposition 3.16 in \cite{Bax-Jai}]}
	In the same hypotheses of   Theorem \ref{th:exponentially good approximations}, if
\begin{itemize}
\item $ I^m(x)\overset {m \rightarrow +\infty}{{\longrightarrow}} J(x)$, for   $x\in E$;
\item $ x_m \overset{m \rightarrow +\infty}{{\longrightarrow}}x$ implies $  \liminf_{m \to+\infty}I^m(x_m)\geq J(x),$
\end{itemize}
for some functional $J(\cdot)$, then $ I(\cdot)=J(\cdot)$.

		\end{proposition}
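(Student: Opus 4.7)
The plan is to prove $\underline{I}(x)=\overline{I}(x)=J(x)$ for each $x\in E$, since then $I(x)=J(x)$. As $\underline{I}(x)\le\overline{I}(x)$ is immediate, it suffices to establish $\overline{I}(x)\le J(x)$ and $\underline{I}(x)\ge J(x)$.

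The first inequality is essentially a one-line observation: since $x\in B_\delta(x)$ for every $\delta>0$, one has $\inf_{y\in B_\delta(x)}I^m(y)\le I^m(x)$, so by the pointwise convergence hypothesis $I^m(x)\to J(x)$,
$$\limsup_{m\to+\infty}\inf_{y\in B_\delta(x)}I^m(y)\le\limsup_{m\to+\infty}I^m(x)=J(x).$$
Letting $\delta\to 0$ gives $\overline{I}(x)\le J(x)$. The same bound then passes to $\underline{I}(x)$ via $\underline{I}(x)\le\overline{I}(x)$, but the nontrivial direction is the reverse bound on $\underline{I}(x)$.

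For $\underline{I}(x)\ge J(x)$, I would argue by contradiction. Suppose $c:=\underline{I}(x)<J(x)$. Since $\delta\mapsto\liminf_m\inf_{y\in B_\delta(x)}I^m(y)$ is monotone in $\delta$ and decreases to $\underline{I}(x)=c$, for each $k\in\mathbb{N}$ there exist infinitely many $m$ with $\inf_{y\in B_{1/k}(x)}I^m(y)<c+1/k$. Extract a strictly increasing sequence $(m_k)$ realizing this, and choose approximate minimizers $y_k\in B_{1/k}(x)$ with $I^{m_k}(y_k)<c+2/k$. Now define the \emph{hybrid sequence} $\tilde y_m:=y_k$ for $m=m_k$, and $\tilde y_m:=x$ otherwise. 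Then $\tilde y_m\to x$ (because $\|y_k-x\|<1/k$ along the subsequence and $\tilde y_m=x$ elsewhere), so the second hypothesis forces $\liminf_m I^m(\tilde y_m)\ge J(x)$. But along the subsequence $(m_k)$, $I^{m_k}(\tilde y_{m_k})=I^{m_k}(y_k)\le c+2/k\to c<J(x)$, yielding $\liminf_m I^m(\tilde y_m)\le c<J(x)$ — a contradiction. Hence $\underline{I}(x)\ge J(x)$ and the three quantities coincide.

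The main subtlety will be the construction of the hybrid sequence $\tilde y_m$: the two hypotheses alone only control $I^m$ at $x$ and along convergent sequences, neither of which individually says anything about $\liminf$ of $\inf_{y\in B_\delta(x)}I^m(y)$. The trick of interleaving approximate minimizers with the constant point $x$ produces a sequence that still converges to $x$ yet exploits the small values of $I^{m_k}(y_k)$ along the subsequence, allowing hypothesis two to deliver the contradiction. Everything else (monotonicity in $\delta$, extracting approximate minimizers since the infimum need not be attained, and the trivial upper bound using $x\in B_\delta(x)$) is routine.
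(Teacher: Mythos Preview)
Your proof is correct. The paper does not supply its own proof of this proposition: it is simply quoted as Proposition~3.16 in \cite{Bax-Jai} and used as a black box in the identification of the rate function (Theorem~\ref{th:identification}). So there is no proof in the paper to compare against.

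A small stylistic remark: the contradiction framing is not actually needed. Your construction already yields a direct argument. With $(m_k)$ strictly increasing and $y_k\in B_{1/k}(x)$ chosen so that $I^{m_k}(y_k)<a_{1/k}+2/k$ where $a_\delta:=\liminf_m\inf_{y\in B_\delta(x)}I^m(y)$, the hybrid sequence $\tilde y_m$ converges to $x$, and hence
\[
J(x)\;\le\;\liminf_{m\to+\infty} I^m(\tilde y_m)\;\le\;\liminf_{k\to+\infty} I^{m_k}(y_k)\;\le\;\lim_{k\to+\infty}\big(a_{1/k}+2/k\big)\;=\;\underline I(x),
\]
which is exactly the desired inequality. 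This is the same idea as your argument, just without passing through the auxiliary constant $c$ and the contradiction.
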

\begin{remark}
	\rm Let $ (E, \mathscr{B}(E)) $ be a Polish space and $ d_E $ the metric on $ E. $ If the random variables $ (Z^{n,m} )_{n\in \N}$ for $m\geq 1$ are exponentially good approximations of  $( {Z}^{n})_{n\in \N} $ at the speed $\gamma_n$, then (\ref{eq:exponentially good approximations BJ}) holds.
	Consider $ A \in \mathscr{B}(E),  $ then
\begin{eqnarray*}  \mu^{n,m}(A)= \mathbb{P}(Z^{n,m} \in A)&=& \mathbb{P}(Z^{n,m} \in A,\, d_E(Z^{n,m},  {Z}^{n})\leq\delta)
 + \mathbb{P}(Z^{n,m} \in A, \, d_E(Z^{n,m},  {Z}^{n})>\delta)\\
 &\leq& \mathbb{P}( {Z}^{n} \in A^\delta)+ \mathbb{P}( d_E(Z^{n,m},  {Z}^{n})>\delta)\\
 &=& \mu^{n}(A^\delta)+\mathbb{P}( d_E(Z^{n,m},  {Z}^{n})>\delta),
\end{eqnarray*}
where $A^\delta$ is defined in (\ref{eq:Adelta}).
It follows that
$$ \rho_\delta(\mu^{n,m},  {\mu}^{n})\leq \mathbb{P}(d_E(Z^{n,m},  {Z}^{n})>\delta)=\mathbb{P}((Z^{n,m},  {Z}^{n}) \in \Gamma_\delta) $$
Therefore, for every $ \delta>0 $
$$\lim_{m \to +\infty}\limsup_{n \to 0}\gamma_n^{-1}\log\rho_\delta(\mu^{n,m}, \mu^{n})=-\infty  $$
	\end{remark}

 \begin{proposition}\label{prop:exponentially good approximations}
 The families $ ((Z^{n,m}) )_{n\in \N}$, $m\geq 1$, defined in  (\ref{eq:logprice-scaled-corr-psi-m})  are  exponentially good approximations, at the speed $\ep_n^{-2}$, of $( {Z}^{n})_{n\in \N} $
 defined in (\ref{eq:logprice-corr-scaled}).
 \end{proposition}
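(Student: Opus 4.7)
The plan is to exploit the identity
\[
Z^{n}_t - Z^{n,m}_t \;=\; \ep_n\rho\, N^{n,m}_t, \qquad N^{n,m}_t := \int_0^t\!\Big[\sigma(\ep_n\hat B_s)-\sigma\Big(\ep_n\hat B_{\lfloor ms/T\rfloor T/m}\Big)\Big]\, dB_s,
\]
obtained by subtracting (\ref{eq:logprice-scaled-corr-psi-m}) from (\ref{eq:logprice-corr-scaled}) (the drift and $dW$ pieces cancel, and the definition of $\Psi_m$ in (\ref{eq:psi-m}) identifies $\rho\Psi_m(\ep_n B,\ep_n\hat B)(t)=\ep_n\rho\int_0^t\sigma(\ep_n\hat B_{\lfloor ms/T\rfloor T/m})\,dB_s$ as the left Riemann stochastic integral), and then to combine Bernstein's exponential inequality for continuous martingales with the exponential tightness of $(\ep_n\hat B)_{n\in\N}$ provided by the LDP of Remark \ref{rem:LDP Volterra}. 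Since both $Z^n$ and $Z^{n,m}$ are adapted to the Brownian filtration $(\mathscr F_t)$, the measurability requirement in the definition of exponentially good approximations is automatic.

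For fixed $R>0$ and $\eta>0$ I would introduce the good event
\[
E_{n,m}(R,\eta)\;=\;\{\|\ep_n\hat B\|_\infty\leq R\}\cap\{w(\ep_n\hat B;T/m)\leq\eta\},
\]
with $w(f;\delta):=\sup_{|s-t|\leq\delta}|f(s)-f(t)|$. Under Assumption \ref{ass:hp-sigma-II}, on $E_{n,m}(R,\eta)$ both $\ep_n\hat B_s$ and $\ep_n\hat B_{\lfloor ms/T\rfloor T/m}$ lie in $[-R,R]$ and differ by at most $\eta$, so $\langle N^{n,m}\rangle_T\leq T\,L(R)^2\omega(\eta)^2$. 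The Bernstein-type inequality in its conditional form $\mathbb{P}(\sup_{t\leq T}|M_t|\geq a,\langle M\rangle_T\leq b)\leq 2e^{-a^2/(2b)}$, proved by stopping at the first hitting time of $b$ by $\langle M\rangle$ and applying Doob to the exponential martingale, applied to $\ep_n\rho N^{n,m}$ then yields
\[
\mathbb{P}\big(\|Z^n-Z^{n,m}\|_\infty>\delta,\, E_{n,m}(R,\eta)\big)\;\leq\; 2\exp\Big(-\tfrac{\delta^2}{2\ep_n^2\rho^2 T L(R)^2\omega(\eta)^2}\Big).
\]
For the bad event, exponential tightness provides, for each $R$, a compact $K_R\subset C_0[0,T]$, contained in $\{\|f\|_\infty\leq R\}$ without loss of generality, with $\limsup_n\ep_n^2\log\mathbb{P}(\ep_n\hat B\notin K_R)\leq -R$; since compacts in $C_0[0,T]$ are equicontinuous, $\sup_{f\in K_R}w(f;T/m)\to 0$ as $m\to\infty$, so one may choose $m_0=m_0(R,\eta)$ with $\{\ep_n\hat B\in K_R\}\subset E_{n,m}(R,\eta)$ for every $m\geq m_0$, which gives $\limsup_n\ep_n^2\log\mathbb{P}(E_{n,m}(R,\eta)^c)\leq -R$.

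Combining the two bounds via $\log(a+b)\leq\log 2+\max(\log a,\log b)$ and $\ep_n^2\log 2\to 0$,
\[
\limsup_{n\to+\infty}\ep_n^2\log\mathbb{P}\big(\|Z^n-Z^{n,m}\|_\infty>\delta\big)\;\leq\;\max\Big\{-R,\;-\tfrac{\delta^2}{2\rho^2 T L(R)^2\omega(\eta)^2}\Big\}
\]
for every $m\geq m_0(R,\eta)$, and the same bound is inherited by $\lim_{m\to+\infty}\limsup_{n\to+\infty}(\cdots)$. Given $C>0$, first choosing $R>C$ (which fixes $L(R)$) and then $\eta$ so small that $\omega(\eta)^2<\delta^2/(2\rho^2 T L(R)^2 C)$ makes the right-hand side $\leq -C$; since $C$ is arbitrary, the double limit equals $-\infty$, which is precisely condition (\ref{eq:exponentially good approximations BJ}) and hence the exponentially-good-approximation property.

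The main obstacle is the Bernstein step: the quadratic variation bound is random and pathwise, valid only on $E_{n,m}(R,\eta)$, so one cannot apply the exponential inequality to a martingale with deterministic bracket and must use the localized version via the stopping-time / exponential-martingale device. The subsequent quantifier juggling between the four parameters $R,\eta,m,n$ is delicate but routine once the equicontinuity characterization of compact sets in $C_0[0,T]$ is exploited, and once one observes that $R$ (and therefore $L(R)$) is chosen before $\eta$, so that $\omega(\eta)$ can be sent to zero freely to absorb the remaining constants.
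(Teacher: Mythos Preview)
Your proof is correct and follows essentially the same strategy as the paper, which reduces the claim to the estimate
\[
\lim_{m\to+\infty}\limsup_{n\to+\infty}\ep_n^{2}\log\mathbb{P}\Big(\ep_n|\rho|\sup_{t\in[0,T]}\Big|\int_0^t\big[\sigma(\ep_n\hat B_s)-\sigma(\ep_n\hat B_{\lfloor ms/T\rfloor T/m})\big]\,dB_s\Big|>\delta\Big)=-\infty
\]
and then simply cites Lemmas~23 and~24 of \cite{Gu1}. Those lemmas implement exactly the two ingredients you spell out: the localized Bernstein exponential martingale inequality on the event where the quadratic variation is controlled, and the modulus-of-continuity estimate coming from exponential tightness of $(\ep_n\hat B)_{n\in\N}$ (the latter is in fact reproved in the paper as Lemma~\ref{lemma:approx-volterra} for the generalized setting of Section~\ref{sect:VolterraGen}). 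So your argument is a self-contained version of what the paper outsources.

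One small bookkeeping point: when you write that the compact $K_R$ can be taken ``contained in $\{\|f\|_\infty\leq R\}$ without loss of generality'', this is not literally what exponential tightness gives you; rather $K_R$ is bounded by some $R'\geq R$, and it is $L(R')$ that enters the Bernstein bound. Since $R$ (and hence $R'$ and $L(R')$) is fixed before $\eta$ is sent to zero, the argument goes through unchanged, but it would be cleaner to separate the two roles of $R$.
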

 \begin{proof}
For every $ \delta>0 $ we have to prove that,
\begin{equation}\label{eq:exponentially good approximations}
 \lim_{m \to +\infty}\limsup_{n \to+\infty}\ep_n^{2} \log \mathbb{P}\left(\lVert Z^{n,m} - Z^{n}\rVert_{\infty} > \delta \right)=-\infty,
\end{equation}
that is
$$\lim_{m \to +\infty}\limsup_{n \to+\infty}\ep_n^{2} \log \mathbb{P}\left(\lVert V^n - \rho\Psi_m(\ep_nB,\ep_n\hat{B})\rVert_{\infty} > \delta \right)=-\infty,
$$
where
$ V^n$ and $\Psi_m$ are defined, respectively  in   (\ref{eq:V_n}) and (\ref{eq:psi-m}).
One can easily verify  that in order to prove  equality  (\ref{eq:exponentially good approximations}), it is enough to show that
\begin{eqnarray}\label{eq:exponentially good approximations2}
\lim_{m \to +\infty}\limsup_{n \to+\infty}\ep_n^{2} \log \mathbb{P}\left(\ep_n|\rho| \sup_{t \in [0,T]}\left|  \int_0^t \left[\sigma\left( \ep_n\hat{B}_s\right)-\sigma\Big( \ep_n\hat{B}_{{\lfloor \frac{ms}T\rfloor}\frac Tm}\Big)\right]dB_s\right|  > \delta \right)=-\infty
\end{eqnarray}
Formula  (\ref{eq:exponentially good approximations2}) was established, under  Assumptions \ref{ass:hp-sigma-mu-I} and \ref{ass:hp-sigma-II},
 in  Lemmas 23 and 24 in \cite{Gu1}.
This completes the proof. \cvd
\end{proof}

Then, we are ready to establish a large deviation principle for the family $ (Z^{n}-x_0)_{n \in \mathbb{N}} $.
\begin{theorem}\label{th:IZ}
	Suppose $ \sigma $ and $ \mu $  satisfy Assumptions \ref{ass:hp-sigma-mu-I} and \ref{ass:hp-sigma-II}.
	 Then $ (Z^{n}-x_0)_{n \in \mathbb{N}} $, satisfies a LDP on $ C_0[0,T]$, with the speed $ \ep_n^{-2} $ and the good rate function $ I_Z $ given by
	$$ I_Z(x)=\underline{I}_Z(x)=\overline{I}_Z(x), $$
	where
	$$ \underline{I}_Z(x)=\lim_{\delta \to 0}\liminf_{m \to +\infty}\inf_{y \in B_\delta(x)}I^m_{{Z}}(y), \quad
	{\rm and} \quad
	\overline{I}_Z(x)=\lim_{\delta \to 0}\limsup_{m \to +\infty}\inf_{y \in B_\delta(x)}I^m_{Z}(y) $$
	with $  B_\delta(x)=\{y \in C_0[0,T]: \lVert x-y\rVert_{\infty}<\delta\} $ and $ I_{Z}^m$'s are  defined in (\ref{eq:rate-function-logprice-scaled-corr-psi-m}).
	
\end{theorem}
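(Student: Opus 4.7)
The plan is to apply Theorem \ref{th:exponentially good approximations} (Theorem 3.11 in \cite{Bax-Jai}) directly. All the ingredients needed to invoke that theorem have already been established in the excerpt, so the proof of Theorem \ref{th:IZ} reduces to checking that these ingredients line up with the hypotheses of Theorem \ref{th:exponentially good approximations} and then simply reading off the conclusion.

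First, I would fix the Polish space $E = C_0[0,T]$ with the sup-norm metric, and the speed $\gamma_n = \varepsilon_n^{-2}$. For each $m \geq 1$, Theorem \ref{th:LDP-approx} supplies a large deviation principle on $C_0[0,T]$ for the family $(Z^{n,m}-x_0)_{n\in\mathbb{N}}$ with speed $\varepsilon_n^{-2}$ and \textit{good} rate function $I_Z^m$ given by (\ref{eq:rate-function-logprice-scaled-corr-psi-m}); this is exactly the per-$m$ LDP hypothesis of Theorem \ref{th:exponentially good approximations}. Note that Assumption \ref{ass:hp-sigma-II} is not needed here; Theorem \ref{th:LDP-approx} uses only Assumption \ref{ass:hp-sigma-mu-I}.

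Second, I would invoke Proposition \ref{prop:exponentially good approximations}, which asserts under Assumptions \ref{ass:hp-sigma-mu-I} and \ref{ass:hp-sigma-II} that the families $((Z^{n,m})_{n\in\mathbb{N}})_{m\geq 1}$ are exponentially good approximations of $(Z^n)_{n\in\mathbb{N}}$ at the speed $\varepsilon_n^{-2}$. This is where the extra regularity on $\sigma$ enters, via the estimates of Lemmas 23 and 24 in \cite{Gu1}. By the remark immediately preceding Proposition \ref{prop:exponentially good approximations}, exponentially good approximation implies the quantitative condition (\ref{eq:exponentially good approximations BJ}) on $\rho_\delta(\mu^{n,m},\mu^n)$, because the events controlling $\rho_\delta$ are dominated by the events $\{\lVert Z^{n,m}-Z^n\rVert_\infty > \delta\}$ whose probabilities decay at super-exponential rate in $\varepsilon_n^{-2}$, uniformly in $n$ after sending $m\to+\infty$.

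With both hypotheses verified, Theorem \ref{th:exponentially good approximations} applies and yields a large deviation principle for $(Z^n - x_0)_{n\in\mathbb{N}}$ on $C_0[0,T]$ at speed $\varepsilon_n^{-2}$ with the good rate function $I_Z$ defined by $I_Z(x) = \underline{I}_Z(x) = \overline{I}_Z(x)$, where $\underline{I}_Z$ and $\overline{I}_Z$ are built from the $I_Z^m$'s exactly as in the statement. I do not expect a real obstacle here: the heavy lifting was done in establishing the LDP for the regularized family (Section \ref{sect:appr-fam}, via Chaganty's theorem) and in the exponential approximation estimate (Proposition \ref{prop:exponentially good approximations}, leaning on \cite{Gu1}). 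What remains is essentially a bookkeeping step that packages these results through the abstract framework of \cite{Bax-Jai}. The identification of $\underline{I}_Z = \overline{I}_Z$ as a more explicit expression is deferred to Section \ref{sect:id-rate} and is not part of the present statement.
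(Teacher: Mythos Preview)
Your proposal is correct and follows essentially the same approach as the paper: invoke Theorem \ref{th:LDP-approx} for the per-$m$ LDP with good rate function $I_Z^m$, invoke Proposition \ref{prop:exponentially good approximations} (and the preceding remark) for the exponential approximation condition, and then apply Theorem \ref{th:exponentially good approximations}. Your write-up is in fact more detailed than the paper's own proof, which dispatches the argument in two sentences.
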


\proof
For every $ m\geq 1 $, $ (Z^{n,m}-x_0)_{n \in \mathbb{N}} $, where $ Z^{n,m} $ is defined by (\ref{eq:logprice-scaled-corr-psi-m}), satisfies a large deviation principle with the speed $ \ep_n^{-2} $ and the good rate function
 $I^m_{Z}$.
From Proposition  \ref{prop:exponentially good approximations}, $((Z^{n,m}-x_0)_{n \in \mathbb{N}})_{m \geq 1} $ is an exponentially good approximation (at the same speed) of $ (Z^{n}-x_0)_{n \in \mathbb{N}} $. This completes the proof.
	\cvd

\subsection{Identification of the rate function}\label{sect:id-rate}

In this section we suppose Assumptions \ref{ass:hp-sigma-mu-I}, \ref{ass:hp-sigma-II} and \ref{ass:hp-sigma-mu-III} are fulfilled.

Theorem  \ref{th:IZ} provides a LDP for the family $ (Z^n-x_0)_{n \in \mathbb{N}} $ with a rate function $I_Z(\cdot)  $ which is obtained in terms of the $ I_{{Z}}^m(\cdot) $'s, but our goal is to write explicitly the rate function.
Let us define a measurable function $ \Psi:C_0[0,T]^2\to C_0[0,T] $  by
\begin{equation}\label{eq:Psi} \Psi(f,g)(\cdot)= \begin{cases}\displaystyle\int_0^\cdot\sigma(\hat{f}(s))\dot{f}(s)\,ds  &(f,g)\in \cl H_{(B,\hat B)}\\
0& (f,g)\in C_0[0,T]^2/\cl H_{(B,\hat B)}\end{cases}
\end{equation}
The function $ \Psi $ is finite on $ C_0[0,T]^2 $ and, for $f\in H_0^1[0,T]$, $\Psi(f,\hat{f}) $ is differentiable with a square integrable derivative, i.e. $ \Psi(f,\hat{f})\in H_0^1[0,T] .$

\begin{remark}\label{rem:sigma-bounded-positive}
\rm Let $D_{L}$ be defined as in
(\ref{eq:ball-CM}).
Then thanks to  Remarks \ref{rem:continuous}  and   \ref{rem:hat-f}, for $f\in D_L$ there exist constants $\overline{\mu}_L$, $\underline{\sigma}_L$ and $\overline{\sigma}_L$ (depending on $ L$)
such that, for $f\in D_{L}$ and $t\in[0,T]$,  we have
$$|\mu(\hat{f}(t))|\leq \overline{\mu}_L,
\quad \quad
0<\underline{\sigma}_L\leq \sigma(\hat{f}(t))\leq \overline{\sigma}_L.$$
\end{remark}

\begin{remark}\label{rem:mu-sigma-slow}\rm
Thanks to
 Assumptions \ref{ass:hp-sigma-mu-I} and \ref{ass:hp-sigma-mu-III} and Remark \ref{rem:hat-f}, there exist a constant $M>0$
such that, for $f\in H_0^1[0,T]$ and $t\in[0,T]$,  we have
$$|\mu(\hat{f}(t))|+ \sigma(\hat{f}(t))\leq M\lVert f\rVert_{H_0^1[0,T]}^{\alpha}.$$
\end{remark}

Next lemma is a particular case of Lemma 2.13 in \cite{Gu2}.  We give some details of the proof for the sake of completeness.

\begin{lemma}\label{lemma:convergence Psi_m}
	For every $L>0,$ if  $D_{L}$ is the set defined in (\ref{eq:ball-CM}), then one has,
	$$  \lim_{m\to+\infty} \sup_{f \in D_{L}}\lVert\Psi(f,\hat{f})-\Psi_m(f,\hat{f})\rVert_{\infty}=0. $$
	
\end{lemma}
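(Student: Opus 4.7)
The plan is to estimate the sup-norm difference $\|\Psi(f,\hat f)-\Psi_m(f,\hat f)\|_\infty$ uniformly over $f\in D_L$ and to show the bound tends to zero as $m\to+\infty$. The key input is that, on $\cl H_{(B,\hat B)}$, both $\Psi$ and $\Psi_m$ admit explicit integral representations against $\dot f$ (from the definition \eqref{eq:Psi} and from Remark \ref{rem:psi-m}), so their difference is an integral of $[\sigma(\hat f(s))-\sigma(\hat f(s_m))]\dot f(s)$, where $s_m=\lfloor ms/T\rfloor T/m$.

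First I would fix $f\in D_L$ and, for $t\in[0,T]$, write
$$
\Psi(f,\hat f)(t)-\Psi_m(f,\hat f)(t)=\int_0^t\Bigl[\sigma(\hat f(s))-\sigma\bigl(\hat f(s_m)\bigr)\Bigr]\dot f(s)\,ds.
$$
Cauchy--Schwarz and $\|f\|_{H_0^1[0,T]}^2\le L$ give
$$
\|\Psi(f,\hat f)-\Psi_m(f,\hat f)\|_\infty^2\le L\int_0^T\bigl|\sigma(\hat f(s))-\sigma(\hat f(s_m))\bigr|^2ds.
$$
Second, I would invoke Remark \ref{rem:hat-f}: for $f\in D_L$ one has $|\hat f(t)|\le\hat L$ for all $t$, so the values $\hat f(s),\hat f(s_m)$ live in the fixed compact interval $[-\hat L,\hat L]$. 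Assumption \ref{ass:hp-sigma-II} (local $\omega$-continuity of $\sigma$) then yields a constant $L(\hat L)>0$, independent of $f$, with
$$
|\sigma(\hat f(s))-\sigma(\hat f(s_m))|\le L(\hat L)\,\omega\bigl(|\hat f(s)-\hat f(s_m)|\bigr).
$$

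Third, I would control the increments of $\hat f$ using condition $(b)$ of Definition \ref{def:Volterra process}. Since $\hat f(t)=\int_0^T K(t,u)\dot f(u)\,du$, Cauchy--Schwarz combined with the bound $M(\delta)\le c\delta^\alpha$ gives
$$
|\hat f(s)-\hat f(s_m)|^2\le\|f\|_{H_0^1[0,T]}^2\,M(|s-s_m|)\le Lc\,(T/m)^\alpha,
$$
because $|s-s_m|\le T/m$. Using that $\omega$ is increasing with $\omega(0)=0$, we obtain
$$
\sup_{f\in D_L}\|\Psi(f,\hat f)-\Psi_m(f,\hat f)\|_\infty^2\le LT\,L(\hat L)^2\,\omega\bigl(\sqrt{Lc}\,(T/m)^{\alpha/2}\bigr)^2,
$$
and letting $m\to+\infty$ the right-hand side vanishes, which is the claim.

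The main obstacle here is of a bookkeeping rather than conceptual nature: one must be careful that all constants depending on $f$ (the $L^\infty$-bound on $\hat f$, the constant in the local modulus of continuity of $\sigma$, and the Hölder constant for $\hat f$) can be chosen uniformly over the bounded set $D_L$. The uniformity in $\|\hat f\|_\infty$ is supplied by Remark \ref{rem:hat-f}, while the uniformity in the modulus-of-continuity estimate for $\hat f$ is supplied by condition $(b)$ together with $\|f\|_{H_0^1}^2\le L$; once these two uniformities are in place, the rest is a direct application of Cauchy--Schwarz.
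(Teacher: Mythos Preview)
Your proof is correct and follows essentially the same approach as the paper's: both reduce the difference $\Psi(f,\hat f)-\Psi_m(f,\hat f)$ to an integral of $[\sigma(\hat f(s))-\sigma(\hat f(s_m))]\dot f(s)$ and control the integrand uniformly over $D_L$ via Cauchy--Schwarz. The only difference is cosmetic: the paper invokes Lemma~22 of \cite{Gu1} as a black box for the uniform estimate $\sup_{f\in D_L}\sup_t|\sigma(\hat f(t))-\sigma(\hat f(t_m))|\to 0$, whereas you unpack that lemma explicitly using the local $\omega$-continuity of $\sigma$ (Assumption~\ref{ass:hp-sigma-II}), the uniform bound on $\|\hat f\|_\infty$ from Remark~\ref{rem:hat-f}, and the H\"older-type control on increments of $\hat f$ coming from condition~\textit{(b)} of Definition~\ref{def:Volterra process}.
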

\proof From Lemma 22 in \cite{Gu1}, we have
\begin{equation}\label{eq:Gu1}
\lim_{m\to+\infty}\sup_{f \in D_{L}}\sup_{t \in [0,T]}\bigg|\sigma(\hat{f}(t))-\sigma\Big(\hat{f}\Big({\Big\lfloor \frac{mt}T\Big\rfloor}\frac Tm\Big)\Big)\bigg|=0.\end{equation}
In Remark \ref{rem:psi-m} we showed that, if $ f \in H_0^1[0,T] $, for $ t \in [0,T] $
$$ \Psi_m(f,\hat{f})(t)=\int_0^t \sigma\Big(\hat{f}\Big({\Big\lfloor \frac{ms}T\Big\rfloor}\frac Tm\Big)\Big)\dot{f}(s)ds  $$
	
Then, it is enough  to show that for every $L >0$,
		$$
		\lim_{m \to +\infty}\sup_{f \in D_{L}}\sup_{t \in [0,T]}\bigg|\int_0^t\bigg[\sigma(\hat{f}(s))-\sigma\Big(\hat{f}\Big({\Big\lfloor \frac{ms}T\Big\rfloor}\frac Tm\Big)\Big)\bigg]\dot{f}(s)\,ds\bigg| = 0 .
	$$
		For $f \in H_0^1[0,T] $ and $ m \geq 1 $ we have
		\begin{eqnarray*}
	 \sup_{f \in D_{L}}\sup_{t \in [0,T]}\bigg|\int_0^t\bigg[\sigma(\hat{f}(s))-\sigma\Big(\hat{f}\Big({\Big\lfloor \frac{ms}T\Big\rfloor}\frac Tm\Big)\Big)\bigg]\dot{f}(s)\,ds\bigg|
		 &\leq&  \sup_{f \in D_{L}}\int_0^T\bigg|\sigma(\hat{f}(s))-\sigma\Big(\hat{f}\Big({\Big\lfloor \frac{ms}T\Big\rfloor}\frac Tm\Big)\Big)\bigg||\dot{f}(s)|\,ds\\
	 &\leq&  \sqrt{L T} \sup_{f \in D_{L}}\sup_{t \in [0,T]}\bigg|\sigma(\hat{f}(t))-\sigma\Big(\hat{f}\Big({\Big\lfloor \frac{mt}T\Big\rfloor}\frac Tm\Big)\Big)\bigg|.
		 \end{eqnarray*}
	 Therefore  the claim follows from (\ref{eq:Gu1}). \cvd

\medskip

Now let us introduce the following functional
\begin{equation}\label{eq:limit I^mZ}
\mathcal{I}_{Z}(x)=\begin{cases} \displaystyle \inf_{f \in H_0^1[0,T]}\mathcal{H}((f,\hat{f}),x) & x\in H_0^1[0,T] \\
\phantom{\inf}+\infty &  x\notin H_0^1[0,T],
\end{cases}
\end{equation}
where for every $ f \in H_0^1[0,T] $,
$$\mathcal{H}((f, \hat{f}),x)=\frac12 \lVert f\rVert_{H_0^1[0,T]}^2+\frac12\int_0^T \bigg(\frac{\dot{x}(t)-\mu(\hat{f}(t))-\rho \dot{\Psi}(f,\hat{f})(t)}{\bar{\rho}\sigma(\hat{f}(t))}\bigg)^2\,dt$$
and  $ \Psi$ is defined in (\ref{eq:Psi}).
We shall prove that $ I_Z(\cdot)=\mathcal{I}_Z(\cdot) $.

\begin{remark}
	\rm For $ x\in H_0^1[0,T] $ we have,
		$$\mathcal{I}_Z(x)=\inf_{f \in H_0^1[0,T]}\mathcal{H}((f,\hat{f}),x)\leq \mathcal{H}((0,0),x)=\frac{1}{2\bar{\rho}^2\sigma^2(0)} \int_0^T(\dot{x}(t)-\mu(0))^2\,dt,$$
therefore
		$$\mathcal{I}_Z(x)= \inf_{f \in D_{C_x}}\mathcal{H}((f,\hat{f}),x)  $$
		where $C_x=\frac{1}{2\bar{\rho}^2\sigma^2(0)}\int_0^T(\dot{x}(t)-\mu(0))^2\,dt$ and  $ D_{C_x}=\{f \in H_0^1[0,T]:\lVert f\rVert_{H_0^1[0,T]}^2\leq C_x\} $.
		Similarly, for $ x \in H_0^1[0,T], $
		for every $ m\geq 1,$ we have
 $$ I_{{Z}}^m(x)= \inf_{f \in D_{C_x}}\mathcal{H}_m((f,\hat{f}),x) $$
		 where, we recall,  $ I_{{Z}}^m(\cdot) $ is the rate function defined in (\ref{eq:I^mZ}) and
		$$\mathcal{H}_m((f,\hat{f}),x)=\frac12 \lVert f\rVert_{H_0^1[0,T]}^2+\frac12\int_0^T\bigg( \frac{\dot{x}(t)-\mu(\hat{f}(t))-\rho \dot{\Psi}_m(f,\hat{f})(t)}{\bar{\rho}\sigma(\hat{f}(t))}\bigg)^2dt.$$
	\end{remark}

In order to prove that  $I_Z(\cdot) = \mathcal{I}_Z(\cdot) $, we have to verify  that the hypotheses of Proposition \ref{prop:identification I} are fulfilled. We start by proving the convergence to $ \mathcal{I}_Z(\cdot) $ of the rate functions $ I_{{Z}}^m (\cdot)$'s.
\begin{lemma}\label{lemma:pointwise convergence}
	For every $ x \in C_0[0,T] $ one has
	$$  \lim_{m \to +\infty}I_{{Z}}^m(x)=\mathcal{I}_Z(x), $$
	where $ I_{{Z}}^m(\cdot) $ and $ \mathcal{I}_Z(\cdot) $ are defined in (\ref{eq:rate-function-logprice-scaled-corr-psi-m}) and (\ref{eq:limit I^mZ}), respectively.
\end{lemma}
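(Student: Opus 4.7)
The plan is to prove the two inequalities $\limsup_m I_Z^m(x) \leq \mathcal{I}_Z(x)$ and $\liminf_m I_Z^m(x) \geq \mathcal{I}_Z(x)$ separately, after reducing to the case $x \in H_0^1[0,T]$ (both quantities being $+\infty$ when $x \notin H_0^1[0,T]$). The key observation is that by choosing the competitor $f \equiv 0$ in both functionals, both $I_Z^m(x)$ and $\mathcal{I}_Z(x)$ are bounded above by $C_x = \frac{1}{2\bar{\rho}^2\sigma(0)^2}\int_0^T(\dot{x}(t)-\mu(0))^2\,dt$, so the relevant infima can be restricted to the weakly-compact ball $D_{C_x}$ from (\ref{eq:ball-CM}).

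For the upper bound, fix $\eta>0$ and take $f_\eta \in D_{C_x}$ with $\mathcal{H}((f_\eta,\hat{f}_\eta),x) \leq \mathcal{I}_Z(x)+\eta$. Since $\dot{\Psi}_m(f_\eta,\hat{f}_\eta)(t)=\sigma(\hat{f}_\eta(\lfloor mt/T\rfloor T/m))\dot{f}_\eta(t)$ and $\dot{\Psi}(f_\eta,\hat{f}_\eta)(t)=\sigma(\hat{f}_\eta(t))\dot{f}_\eta(t)$, Lemma 22 in \cite{Gu1} (cited in the proof of Lemma \ref{lemma:convergence Psi_m}) gives uniform convergence of the coefficient, while Remark \ref{rem:sigma-bounded-positive} supplies a uniform lower bound $\underline{\sigma}_{C_x}>0$ and upper bound $\overline{\sigma}_{C_x}$ on $\sigma\circ\hat{f}_\eta$. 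Dominated convergence then yields $\mathcal{H}_m((f_\eta,\hat{f}_\eta),x) \to \mathcal{H}((f_\eta,\hat{f}_\eta),x)$, so $\limsup_m I_Z^m(x) \leq \mathcal{I}_Z(x)+\eta$ and one lets $\eta \downarrow 0$.

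For the lower bound, pick for each $m$ an almost-minimizer $f_m \in D_{C_x}$ with $\mathcal{H}_m((f_m,\hat{f}_m),x) \leq I_Z^m(x)+1/m$. By weak compactness of $D_{C_x}$ in $H_0^1[0,T]$ one can extract a subsequence $f_{m_k}$ converging weakly to some $f^* \in D_{C_x}$. Condition \textit{(b)} of Definition \ref{def:Volterra process} gives equicontinuity of $(\hat{f}_{m_k})_k$ and, together with the Cauchy--Schwarz estimate of Remark \ref{rem:hat-f}, shows that the Volterra map $f \mapsto \hat{f}$ is compact from $H_0^1[0,T]$ into $C_0[0,T]$, so $\hat{f}_{m_k} \to \hat{f}^*$ uniformly. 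By weak lower semicontinuity of the $H_0^1$-norm one has $\liminf_k \tfrac12\|f_{m_k}\|^2_{H_0^1} \geq \tfrac12\|f^*\|^2_{H_0^1}$, and the integral term in $\mathcal{H}_{m_k}$ is treated by writing $\dot{\Psi}_{m_k}(f_{m_k},\hat{f}_{m_k})=\sigma(\hat{f}_{m_k}(\lfloor \cdot\, m_k/T\rfloor T/m_k))\dot{f}_{m_k}$, combining the uniform convergence of the coefficient (Lemma 22 in \cite{Gu1} applied along the diagonal, plus uniform convergence of $\hat{f}_{m_k}$) with the weak convergence of $\dot{f}_{m_k}$ in $L^2[0,T]$ to obtain weak convergence of the integrands divided by $\bar{\rho}\sigma(\hat{f}_{m_k})$. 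Passing to the liminf then gives $\liminf_k \mathcal{H}_{m_k}((f_{m_k},\hat{f}_{m_k}),x) \geq \mathcal{H}((f^*,\hat{f}^*),x) \geq \mathcal{I}_Z(x)$.

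The main obstacle is this last lower bound: one must pass to the limit in $\mathcal{H}_{m_k}$ while both the test function $f_{m_k}$ and the discretization index $m_k$ vary simultaneously. The delicate point is the cross term produced when expanding $|\dot{x}-\mu(\hat{f}_{m_k})-\rho\sigma(\hat{f}_{m_k}(\lfloor \cdot \, m_k/T\rfloor T/m_k))\dot{f}_{m_k}|^2/(\bar{\rho}\sigma(\hat{f}_{m_k}))^2$, which pairs the weakly (not strongly) convergent $\dot{f}_{m_k}$ with the strongly convergent coefficient; handling it requires exploiting that the coefficient converges uniformly and is uniformly bounded above and away from zero by Remark \ref{rem:sigma-bounded-positive}, so that weak-times-strong convergence in $L^2$ applies and the lower semicontinuity can be extracted term by term.
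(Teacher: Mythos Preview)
Your argument is correct, but it takes a genuinely different and heavier route than the paper's. Both proofs begin by noting that for $x\notin H_0^1[0,T]$ both sides are $+\infty$, and that for $x\in H_0^1[0,T]$ the infima in $I_Z^m(x)$ and in $\mathcal{I}_Z(x)$ may be restricted to the ball $D_{C_x}$. At that point the paper observes that the convergence in (\ref{eq:Gu1}) is \emph{uniform} in $f\in D_{C_x}$, and uses this (together with the uniform bounds of Remark~\ref{rem:sigma-bounded-positive}) to show directly that
\[
\sup_{f\in D_{C_x}}\big|\mathcal{H}_m((f,\hat f),x)-\mathcal{H}((f,\hat f),x)\big|\;\longrightarrow\;0,
\]
from which $|I_Z^m(x)-\mathcal{I}_Z(x)|\to 0$ follows immediately by the elementary inequality $|\inf \mathcal{H}_m-\inf \mathcal{H}|\le \sup|\mathcal{H}_m-\mathcal{H}|$. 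No subsequence extraction, weak compactness, or lower-semicontinuity argument is needed.

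You instead prove $\limsup\le$ and $\liminf\ge$ separately: the former via a fixed near-minimizer and dominated convergence, the latter by taking near-minimizers $f_m\in D_{C_x}$, extracting a weakly convergent subsequence in $H_0^1[0,T]$, using compactness of $f\mapsto\hat f$ to upgrade to uniform convergence of the hats, and finally invoking weak lower semicontinuity of the $L^2$-norm on the integrand. This works, and your handling of the cross term (strongly convergent bounded coefficient times weakly convergent $\dot f_{m_k}$) is sound. The trade-off is that the paper's one-line uniform-convergence argument is considerably shorter for \emph{this} lemma; on the other hand, the weak-compactness machinery you set up here is essentially the same as what the paper itself needs later, in Lemma~\ref{lemma:liminfI_Z^m}, where the point $x_m$ also varies and uniform convergence of $\mathcal{H}_m$ to $\mathcal{H}$ no longer suffices. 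So your approach is overkill at this stage but is a natural rehearsal for what comes next.
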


\proof
 If $x \notin H_0^1[0,T]$, one has   $ I_{{Z}}^m(x)=\mathcal{I}_Z(x)=+\infty$.  If
	 $ x \in H_0^1[0,T]$ we have,
	\begin{eqnarray*}
	|I_{{Z}}^m(x)- \mathcal{I}_Z(x)|&=&\bigg| \inf_{f \in D_{C_x}}\mathcal{H}_m((f,\hat{f}),x)-\inf_{f \in D_{C_x}}\mathcal{H}((f,\hat{f}),x) \bigg|
	\leq \sup_{f \in D_{C_x}}|\mathcal{H}_m((f,\hat{f}),x)-\mathcal{H}((f,\hat{f}),x)|.
	\end{eqnarray*}
	Taking into account Remark \ref{rem:sigma-bounded-positive},  we have
$$\displaylines{
	 \sup_{f \in D_{C_x}}|\mathcal{H}_m((f,\hat{f}),x)-\mathcal{H}((f,\hat{f}),x)|\leq \cr
 \frac{\rho^2}{2\bar{\rho}^2\underline{\sigma}_{C_x}^2}\sup_{f \in D_{C_x}}\bigg[\int_0^T|\dot{\Psi}_m^2(f,\hat{f})(t)-\dot{\Psi}^2(f,\hat{f})(t)|dt
	+ \frac2\rho\int_0^T|\dot{x}(t)||\dot{\Psi}(f,\hat{f})(t)-\dot{\Psi}_m(f,\hat{f})(t)|\,dt\cr
	+ \frac2\rho \int_0^T|\mu(\hat{f}(t))||\dot{\Psi}_m(f,\hat{f})(t)-\dot{\Psi}(f,\hat{f})(t) |dt \bigg].
	}$$
	Now we study the three addends. For the first one we have,
	$$ \sup_{f \in D_{C_x}}\int_0^T|\dot{\Psi}_m^2(f,\hat{f})(t)-\dot{\Psi}^2(f,\hat{f})(t)|dt\leq 2\bar{\sigma}_{C_x}C_x\sup_{f\in D_{C_x}}\sup_{t \in [0,T]}\bigg| \sigma\Big(\hat{f}\Big({\Big\lfloor \frac{mt}T\Big\rfloor}\frac Tm\Big)\Big)-\sigma(\hat{f}(t))\bigg|. $$
From the Cauchy-Schwarz inequality and Remark \ref{rem:sigma-bounded-positive} we have,
	$$\displaylines{\sup_{f \in D_{C_x}}\int_0^T|\dot{x}(t)||\dot{\Psi}(f,\hat{f})(t)-\dot{\Psi}_m(f,\hat{f})(t)|\,dt
\leq
\sqrt{C_x}\lVert x\rVert_{H_0^1[0,T]}|\sup_{f\in D_{C_x}}\sup_{t\in [0,T]}\bigg| \sigma\Big(\hat{f}\Big({\Big\lfloor \frac{mt}T\Big\rfloor}\frac Tm\Big)\Big)-\sigma(\hat{f}(t))\bigg|.}
	$$
Finally,
$$\displaylines{\sup_{f\in D_{C_x}}\int_0^T|\mu(\hat{f}(t))|\,|\dot{\Psi}_m(f,\hat{f})(t)-\dot{\Psi}(f,\hat{f})(t) |dt\leq
	\sqrt{TC_x}\,\bar{\mu}_{C_x}\sup_{f\in D_{C_x}}\sup_{t\in[0,T]}\bigg| \sigma\Big(\hat{f}\Big({\Big\lfloor \frac{mt}T\Big\rfloor}\frac Tm\Big)\Big)-\sigma(\hat{f}(t))\bigg|.}$$

The claim then follows from equation (\ref{eq:Gu1}).
 \cvd

It remains to show that  $x_m\overset{}{\underset{m \to +\infty}{\longrightarrow}}x  $ implies $  \liminf_{m\to+\infty} I_Z^m(x_m)\geq \mathcal{I}_Z(x).$
For this purpose we  need to prove that $ \mathcal{I}_Z(\cdot) $ is lower semicontinuous.
\begin{remark}\label{rem:semicon J}\rm
	For every $f\in H_0^1[0,T] $, we consider the functional
	\begin{equation}\label{eq:H}
	{\mathcal J}(x|(f,\hat{f}))= \begin{cases}
	\displaystyle \frac12\int_0^T\bigg(\frac{\dot{x}(t)-\mu(\hat{f}(t))-\rho\dot{\Psi}(f,\hat{f})(t)}{\bar{\rho}\sigma(\hat{f}(t))} \bigg)^2dt & x\in H_0^1[0,T]\\
	  +\displaystyle\infty &x\notin H_0^1[0,T].
	 \end{cases}
 \end{equation}
It is easy to verify that it is the good rate function of the family  $ ((Z_t^{n, (f,\hat{f})})_{t\in [0,T]})_{n \in \mathbb{N}}, $ where
\begin{equation}\label{eq:Z(f,hatf)} {Z}_t^{n,(f,g)}=  X_t^{n, g} +\rho\Psi(f,g)(t) \quad 0\leq t \leq T,
\end{equation}
 therefore it is lower semicontinuous.
\end{remark}

\begin{remark}\label{rem:B_L compatti}
For $L>0$, denote by $B_{L}$ the level sets in the  space $\cl H_{(B,\hat B)}$, i.e.
\begin{equation}\label{eq:B_L}
B_L=\{(f,g) \in \cl H_{(B,\hat B)}: \lVert f\rVert_{H_0^1[0,T]}^2\leq L\}.
\end{equation}
$ B_L $ is a compact set in $ C_0[0,T]^2$ since $ I_{(B,\hat{B})}(\cdot,\cdot) $ defined in (\ref{eq:couple rate function}) is a good rate function.
\end{remark}
\begin{lemma}\label{lemma: ball-continuity Psi}
	The function $\Psi:C_0[0,T]^2\to C_0[0,T]  $ defined in (\ref{eq:Psi}) is continuous on the set $B_L$ defined in (\ref{eq:B_L}), for every $ L>0.$
\end{lemma}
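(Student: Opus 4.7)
My plan is to show directly that if $((f_n,\hat f_n))_{n\in\N}\subset B_L$ converges uniformly to $(f,\hat f)\in B_L$, then $\Psi(f_n,\hat f_n)\to \Psi(f,\hat f)$ in $C_0[0,T]$. The argument splits into establishing a weak convergence of the derivatives and then controlling two natural error terms.

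\textbf{Step 1: Weak convergence of the derivatives.} Since $\|f_n\|_{H_0^1[0,T]}^2\leq L$, the sequence $(\dot f_n)_{n\in\N}$ is bounded in $L^2[0,T]$, which is reflexive, so every subsequence has a further subsequence $\dot f_{n_k}$ converging weakly to some $g\in L^2[0,T]$. Testing against the indicator $\mathbf 1_{[0,t]}$ gives $f_{n_k}(t)=\int_0^t \dot f_{n_k}(s)\,ds\to\int_0^t g(s)\,ds$ for every $t\in[0,T]$. On the other hand $f_{n_k}\to f$ uniformly, so $f(t)=\int_0^t g(s)\,ds$, i.e.\ $g=\dot f$. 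By uniqueness of the limit along subsequences, $\dot f_n\rightharpoonup \dot f$ weakly in $L^2[0,T]$.

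\textbf{Step 2: Splitting the difference.} For $t\in[0,T]$ write
\begin{equation*}
\Psi(f_n,\hat f_n)(t)-\Psi(f,\hat f)(t)=\int_0^t\bigl[\sigma(\hat f_n(s))-\sigma(\hat f(s))\bigr]\dot f_n(s)\,ds+\int_0^t\sigma(\hat f(s))\bigl[\dot f_n(s)-\dot f(s)\bigr]\,ds.
\end{equation*}
By Remark \ref{rem:continuous}$(i)$, $\sigma\circ\hat f_n\to\sigma\circ\hat f$ uniformly on $[0,T]$; by Remark \ref{rem:sigma-bounded-positive}, $\sigma\circ\hat f$ and $\sigma\circ\hat f_n$ are uniformly bounded (since all $f_n,f\in D_L$). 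Cauchy--Schwarz and the bound $\|\dot f_n\|_{L^2}\leq\sqrt L$ then give
\begin{equation*}
\sup_{t\in[0,T]}\Bigl|\int_0^t\bigl[\sigma(\hat f_n(s))-\sigma(\hat f(s))\bigr]\dot f_n(s)\,ds\Bigr|\leq\sqrt{TL}\,\|\sigma\circ\hat f_n-\sigma\circ\hat f\|_\infty\longrightarrow 0.
\end{equation*}

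\textbf{Step 3: Uniform vanishing of the weak-convergence term.} Set $G_n(t)=\int_0^t\sigma(\hat f(s))[\dot f_n(s)-\dot f(s)]\,ds$. For each fixed $t$, the function $s\mapsto\mathbf 1_{[0,t]}(s)\sigma(\hat f(s))$ lies in $L^2[0,T]$, so by Step 1, $G_n(t)\to 0$ pointwise. Moreover, for $0\leq s<t\leq T$, Cauchy--Schwarz yields
\begin{equation*}
|G_n(t)-G_n(s)|\leq \|\sigma\circ\hat f\|_\infty\bigl(\|\dot f_n\|_{L^2}+\|\dot f\|_{L^2}\bigr)\sqrt{t-s}\leq 2\overline\sigma_L\sqrt{L}\,\sqrt{t-s},
\end{equation*}
so $(G_n)$ is equicontinuous and pointwise bounded. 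Arzel\`a--Ascoli, combined with the pointwise limit $0$, forces $G_n\to 0$ uniformly on $[0,T]$. Combining with Step 2 gives $\Psi(f_n,\hat f_n)\to \Psi(f,\hat f)$ in $C_0[0,T]$, which is the claim.

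The only delicate point is Step 3: passing from pointwise convergence coming out of weak $L^2$-convergence to \emph{uniform} convergence in $t$. Equicontinuity via the Cauchy--Schwarz bound with the uniform $H_0^1$-norm control on $B_L$ is what unlocks it; without the $L$-dependent bound this step would not work.
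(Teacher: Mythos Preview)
Your proof is correct, but it takes a genuinely different route from the paper. The paper's argument is a one-liner: since each $\Psi_m$ is continuous on $C_0[0,T]^2$ and Lemma~\ref{lemma:convergence Psi_m} gives $\sup_{f\in D_L}\|\Psi(f,\hat f)-\Psi_m(f,\hat f)\|_\infty\to 0$, the restriction of $\Psi$ to $B_L$ is a uniform limit of continuous functions, hence continuous. Your approach is instead direct: you extract weak $L^2$-convergence of the derivatives from the uniform $H_0^1$ bound, split $\Psi(f_n,\hat f_n)-\Psi(f,\hat f)$ into a term handled by uniform convergence of $\sigma\circ\hat f_n$ and a term $G_n$ handled by weak convergence plus an equicontinuity/Arzel\`a--Ascoli upgrade to uniform convergence.

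What each approach buys: the paper's proof is shorter precisely because it cashes in the approximation machinery $\Psi_m$ already built for the exponentially-good-approximation argument. Your proof is self-contained and avoids the $\Psi_m$ discretisation entirely; in particular it uses only the continuity of $\sigma$ (Assumption~\ref{ass:hp-sigma-mu-I}) and not the local $\omega$-continuity (Assumption~\ref{ass:hp-sigma-II}) that enters Lemma~\ref{lemma:convergence Psi_m} via equation~(\ref{eq:Gu1}). Your Step~3 is the genuinely interesting part: upgrading weak convergence to uniform convergence of the integrals via equicontinuity is exactly the right mechanism, and the $L$-dependent bound on $B_L$ is indeed what makes it work.
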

\proof
	Easily follows from  Lemma \ref{lemma:convergence Psi_m} and the continuity of $\Psi_m$ (for every $m\geq 1$).
	\cvd

In the next lemma we will prove that $ {\mathcal J}(\cdot|(\cdot,\cdot))$ is lower semicontinuous as a function of $ ((f,g),x) \in    B_L\times C_0[0,T]$.
\begin{lemma}\label{lemma:semicontinuity H_2}
	Let $ ((f_n,\hat f_n),x_n) \in   B_L  \times C_0[0,T]$ be a sequence of functions such that
	$((f_n,\hat f_n),x_n) \overset{}{\underset{n \to +\infty}{\longrightarrow}}((f,g),x) $
	in $ C_0[0,T]^2\times C_0[0,T].$
Then one has,
	$$ \liminf_{n\to+\infty}{\mathcal J}(x_n|(f_n,\hat f_n)) \geq {\mathcal J}(x|(f,g))$$
	where $ {\mathcal J}(\cdot|(\cdot,\cdot))$ is defined in (\ref{eq:H}).
	\end{lemma}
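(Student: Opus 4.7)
The plan is to rewrite $\mathcal{J}$ in a form that reduces the claim to the lower semicontinuity already established for the uncorrelated rate function $J(\cdot|\cdot)$ in part $(c)$ of Proposition \ref{prop:LDP continuity condition Uncorrelated}, and then to exploit the continuity of the map $\Psi$ on the set $B_L$ proved in Lemma \ref{lemma: ball-continuity Psi}. More precisely, comparing (\ref{eq:H}) with (\ref{eq:cond-rate-function}) one sees that for every $f\in H_0^1[0,T]$ and every $x\in C_0[0,T]$,
$$
\mathcal{J}(x|(f,\hat f))=J\bigl(x-\rho\,\Psi(f,\hat f)\,\bigr|\,\hat f\bigr),
$$
because $\Psi(f,\hat f)\in H_0^1[0,T]$ so the shift by $\rho\Psi(f,\hat f)$ keeps $x$ in $H_0^1[0,T]$ exactly when $x$ itself belongs there, and differentiating under the integral reproduces the integrand of (\ref{eq:H}).

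Given this reduction, I would first dispose of the trivial case $\liminf_{n\to+\infty}\mathcal{J}(x_n|(f_n,\hat f_n))=+\infty$. Otherwise, extract a subsequence (not relabeled) achieving the liminf with each term finite. Since $((f_n,\hat f_n))_n\subset B_L$ and $B_L$ is closed (in fact compact, by Remark \ref{rem:B_L compatti}), the limit $(f,g)$ lies in $B_L$, so $g=\hat f$ with $f\in D_L$. Next, Lemma \ref{lemma: ball-continuity Psi} applied on $B_L$ gives
$$
\Psi(f_n,\hat f_n)\xrightarrow[n\to+\infty]{C_0[0,T]}\Psi(f,\hat f),
$$
so that
$$
x_n-\rho\,\Psi(f_n,\hat f_n)\xrightarrow[n\to+\infty]{C_0[0,T]}x-\rho\,\Psi(f,\hat f),
\qquad
\hat f_n\xrightarrow[n\to+\infty]{C_0[0,T]}\hat f.
$$

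Finally, invoking lower semicontinuity of $J(\cdot|\cdot)$ on $C_0[0,T]\times C_0[0,T]$, which is precisely part $(c)$ of Proposition \ref{prop:LDP continuity condition Uncorrelated}, I conclude that
$$
\liminf_{n\to+\infty}\mathcal{J}(x_n|(f_n,\hat f_n))=\liminf_{n\to+\infty}J\bigl(x_n-\rho\,\Psi(f_n,\hat f_n)\,\bigr|\,\hat f_n\bigr)\ge J\bigl(x-\rho\,\Psi(f,\hat f)\,\bigr|\,\hat f\bigr)=\mathcal{J}(x|(f,\hat f)),
$$
which is the desired inequality (note that $(f,g)=(f,\hat f)$ thanks to the inclusion in $B_L$).

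The only non-routine ingredient is the identity $\mathcal{J}(x|(f,\hat f))=J(x-\rho\Psi(f,\hat f)\,|\,\hat f)$, together with the fact that on the restricted domain $B_L$ the otherwise merely measurable map $\Psi$ is continuous; the rest is a direct transfer of lower semicontinuity from the uncorrelated setting already handled in Section \ref{sect:uncorrelatedSVM}. Thus the main (and only) obstacle is to be certain the convergence $(f_n,\hat f_n)\to(f,g)$ takes place within the compact set $B_L$ on which Lemma \ref{lemma: ball-continuity Psi} operates, which is granted by hypothesis.
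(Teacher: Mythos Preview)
Your proof is correct and takes a slightly different, cleaner route than the paper's. The paper argues by repeating the $\sigma$-ratio trick used in the proof of Proposition~\ref{prop:LDP continuity condition Uncorrelated}\,(c): it writes
\[
\mathcal{J}(x_n|(f_n,\hat f_n))\ \ge\ \inf_{t\in[0,T]}\Big(\tfrac{\sigma(\hat f(t))}{\sigma(\hat f_n(t))}\Big)^{2}\,
\mathcal{J}\Big(x_n+\rho\big(\Psi(f,\hat f)-\Psi(f_n,\hat f_n)\big)+\textstyle\int_0^{\cdot}(\mu(\hat f)-\mu(\hat f_n))\,\Big|\,(f,\hat f)\Big),
\]
and then appeals to the lower semicontinuity of $\mathcal{J}(\cdot|(f,\hat f))$ for the \emph{fixed} pair $(f,\hat f)$ (Remark~\ref{rem:semicon J}), together with Lemma~\ref{lemma: ball-continuity Psi} and $\sigma\circ\hat f_n\to\sigma\circ\hat f$. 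You instead observe the identity $\mathcal{J}(x|(f,\hat f))=J(x-\rho\Psi(f,\hat f)\,|\,\hat f)$ and invoke the \emph{joint} lower semicontinuity of $J(\cdot|\cdot)$ already established in Proposition~\ref{prop:LDP continuity condition Uncorrelated}\,(c). This avoids redoing the ratio manipulation that is effectively hidden inside that proposition, at the modest cost of checking the shift identity (which is immediate since $\Psi(f,\hat f)\in H_0^1[0,T]$). Both arguments rely on the same crucial ingredient, namely the continuity of $\Psi$ on $B_L$ from Lemma~\ref{lemma: ball-continuity Psi}, and on $B_L$ being closed so that the limit pair satisfies $g=\hat f$.
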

\proof
If  $\liminf_{n \to+\infty}{\mathcal J}(x_n|(f_n,\hat{f}_n))=+\infty,$
 there is nothing to prove.
 Therefore we can suppose that $ (x_n)_{n \in \mathbb{N}}\subset H_0^1[0,T] $. Since $ (f_n,\hat f_n) \overset{}{\underset{n \to +\infty}{\longrightarrow}}(f,g) $ in $ C_0[0,T]^2$ we have that $(f,g)\in B_L $, i.e. $g=\hat f$.
Furthermore it is easy  to show that
$$ {\mathcal J}(x_n|(f_n,\hat{f}_n))  \geq  \inf_{t \in [0,T]} \bigg(\frac{\sigma(\hat{f}(t))}{\sigma(\hat{f}_n(t))}\bigg)^2 {\mathcal J}\bigg( x_n+\rho\big(\Psi(f,\hat{f})(\cdot)-\Psi(f_n,\hat{f}_n)(\cdot)\big).
   +  \int_0^{\cdot}(\mu(\hat{f}(s))-\mu(\hat{f}_n(s)))ds\bigg|(f,\hat{f})\bigg) .
$$
Now
$$x_n+\rho\Psi(f,\hat{f})(\cdot)-\Psi(f_n,\hat{f}_n)(\cdot)
   +  \int_0^{\cdot}(\mu(\hat{f}(s))-\mu(\hat{f}_n(s)))ds\overset{C_0[0,T]}{\underset{n \to +\infty}{\longrightarrow}}x,  $$
since  $x_n\overset{C_0[0,T]}{\underset{n \to +\infty}{\longrightarrow}}x$,
$ \Psi(f_n,\hat{f}_n)\overset{C_0[0,T]}{\underset{n \to +\infty}{\longrightarrow}} \Psi(f,\hat{f})$ from Lemma \ref{lemma: ball-continuity Psi} and
 $\mu\circ\hat f_n\overset{C_0[0,T]}{\underset{n \to +\infty}{\longrightarrow}}\mu\circ\hat f$ from Remark \ref{rem:continuous} (which implies
$  \int_0^{\cdot}\mu(\hat{f}_n(s))ds\overset{C_0[0,T]}{\underset{n \to +\infty}{\longrightarrow}}\int_0^{\cdot}\mu(\hat{f}(s))ds$).	
   The claim follows from semicontinuity of $ {\mathcal J}(\cdot|(f,\hat{f})) $ (see Remark \ref{rem:semicon J}) and from the uniform convergence  of  $\sigma\circ\hat f_n\to\sigma\circ\hat f$.
\cvd

\begin{lemma}\label{lemma:semicontinuity I^Z}
	The function $ \mathcal{I}_Z(\cdot) ,$ defined in (\ref{eq:limit I^mZ}) is lower semicontinuous.
\end{lemma}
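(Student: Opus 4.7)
The plan is to use a standard ``extract a minimizing sequence, gain compactness from the rate function, then apply the joint semicontinuity already established.''

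First I take $(x_n)_{n\in\N}\subset C_0[0,T]$ with $x_n\to x$ in $C_0[0,T]$ and set $M=\liminf_{n\to+\infty}\mathcal{I}_Z(x_n)$. If $M=+\infty$ there is nothing to prove, so I pass to a subsequence (not relabeled) along which $\mathcal{I}_Z(x_n)\to M<+\infty$; in particular $x_n\in H_0^1[0,T]$ for large $n$. For each such $n$ I choose $f_n\in H_0^1[0,T]$ with $\mathcal{H}((f_n,\hat f_n),x_n)\leq \mathcal{I}_Z(x_n)+1/n$. Since $\mathcal{H}((f_n,\hat f_n),x_n)\geq \tfrac12\|f_n\|_{H_0^1[0,T]}^2$ and the left-hand side is bounded, there exists $L>0$ such that $(f_n,\hat f_n)\in B_L$ for every $n$ large enough, where $B_L$ is the set defined in (\ref{eq:B_L}).

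Next I use compactness: by Remark \ref{rem:B_L compatti}, $B_L$ is a compact subset of $C_0[0,T]^2$, so up to a further subsequence $(f_n,\hat f_n)\to(f,\hat f)$ in $C_0[0,T]^2$ with $(f,\hat f)\in B_L$. Because $I_{(B,\hat B)}$ defined in (\ref{eq:couple rate function}) is a good (hence lower semicontinuous) rate function, I get
\[
\frac12\|f\|_{H_0^1[0,T]}^2=I_{(B,\hat B)}(f,\hat f)\leq \liminf_{n\to+\infty} I_{(B,\hat B)}(f_n,\hat f_n)=\liminf_{n\to+\infty}\frac12\|f_n\|_{H_0^1[0,T]}^2.
\]
On the other hand, Lemma \ref{lemma:semicontinuity H_2} applies on $B_L$ and gives
\[
\liminf_{n\to+\infty}\mathcal{J}(x_n|(f_n,\hat f_n))\geq \mathcal{J}(x|(f,\hat f)).
\]

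Putting the two liminf inequalities together, I obtain
\[
\mathcal{H}((f,\hat f),x)=\frac12\|f\|_{H_0^1[0,T]}^2+\mathcal{J}(x|(f,\hat f))\leq \liminf_{n\to+\infty}\mathcal{H}((f_n,\hat f_n),x_n)\leq \liminf_{n\to+\infty}\Big(\mathcal{I}_Z(x_n)+\tfrac1n\Big)=M.
\]
In particular $\mathcal{H}((f,\hat f),x)<+\infty$, which (since $\mathcal{J}(\cdot|(f,\hat f))\equiv +\infty$ off $H_0^1[0,T]$) forces $x\in H_0^1[0,T]$. Thus $\mathcal{I}_Z(x)\leq \mathcal{H}((f,\hat f),x)\leq M=\liminf_{n\to+\infty}\mathcal{I}_Z(x_n)$, which is the desired lower semicontinuity.

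The only delicate point is producing a convergent subsequence of $(f_n,\hat f_n)$, and this is handled by observing that the $f_n$'s lie in a Cameron--Martin ball whose image under $f\mapsto(f,\hat f)$ is precisely a sublevel set of the good rate function $I_{(B,\hat B)}$; once compactness in $C_0[0,T]^2$ is in hand, the rest is a direct combination of Lemma \ref{lemma:semicontinuity H_2} with the lower semicontinuity of the $H_0^1$-norm inherited from $I_{(B,\hat B)}$.
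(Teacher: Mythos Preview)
Your proof is correct and follows essentially the same route as the paper's: both extract a near-minimizing sequence $(f_n)$, use that $(f_n,\hat f_n)$ lies in a compact level set $B_L$ of $I_{(B,\hat B)}$, pass to a convergent subsequence, and then combine Lemma~\ref{lemma:semicontinuity H_2} with the lower semicontinuity of the $H_0^1$-norm. The only cosmetic difference is that the paper phrases it as closedness of the sublevel sets $\{\mathcal{I}_Z\leq L\}$, whereas you work directly with the sequential definition of lower semicontinuity.
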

\proof
	 It is  enough to show that, for $L>0$, the set
	$$
	M=\{x\in C_0[0,T]: \mathcal{I}_Z(x)\leq L\}
	 $$
	is  closed.
	Let $ (x_n)_{n \in \mathbb{N}}\subset M $ be a converging sequence of functions,  $ x_n\overset{C_0[0,T]}{\underset{n \to +\infty}{\longrightarrow}} x $. Thanks to the definition of $\mathcal{I}_Z$
we can choose a sequence $ (f_n)_{n \in \mathbb{N}}\subset H_0^1[0,T] $ such that, for every $ n \in \mathbb{N}, $
	\begin{equation}\label{eq:ineq-norm}\frac12 \lVert f_n\rVert_{H_0^1[0,T]}^2+{\mathcal J}(x_n|(f_n,\hat{f}_n))\leq \mathcal{I}_Z(x_n)+ \frac1n \leq L+ \frac1n\leq (L +1).\end{equation}
Therefore $ (f_n,\hat{f}_n)_{n \in \mathbb{N}}\subset B_{2(L+1)} $, where $ B_{2(L+1)} $ is   the  compact set of $ C_0[0,T]^2 $
defined in (\ref{eq:B_L}) and, up to a subsequence, we can suppose that $$(f_n,\hat{f}_n)\overset{C_0[0,T]^2}{\underset{n \to +\infty}{\longrightarrow}}(f,\hat f)\in B_{2(L+1)}.$$
	Now, ${\mathcal J}(\cdot|(\cdot,\cdot))  $ is lower semicontinuous on $ B_{2(L+1)}\times C_0[0,T]$ from Lemma \ref{lemma:semicontinuity H_2}. Then from  inequality (\ref{eq:ineq-norm}) and the lower semicontinuity of the norm,  we have
	\begin{eqnarray*}
	\mathcal{I}_Z(x)&\leq& \frac12 \lVert f\rVert_{H_0^1[0,T]}^2+{\cal J}(x|(f,\hat{f}))
	\leq \liminf_{n \to \infty}\Big(\frac12 \lVert f_n\rVert_{H_0^1[0,T]}^2+{\cal J}(x_n|(f_n,\hat{f}_n))\Big)\leq L.
	\end{eqnarray*}
	Thus $ x \in M $, and  $ M $ is a closed subset of $ C_0[0,T]. $
\cvd

Now, we are ready  prove the final lemma of this section.
\begin{lemma}\label{lemma:liminfI_Z^m}
	If $ x_m {\underset{m \to +\infty}{\longrightarrow}}x  $ in $ C_0[0,T]$, then
	$$  \liminf_{m \to +\infty}I_Z^m(x_m)\geq \mathcal{I}_Z(x) $$
	where $ I_Z^m(\cdot) $ and $\mathcal{I}_Z(\cdot)  $ are defined, respectively, in (\ref{eq:I^mZ}) and (\ref{eq:limit I^mZ}).
	\end{lemma}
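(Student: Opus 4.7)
The plan is to mimic the semicontinuity argument used in Lemma \ref{lemma:semicontinuity I^Z}, but now applied along a sequence in $m$ rather than $n$. Passing to a subsequence, I may assume $\liminf_{m \to +\infty} I^m_Z(x_m) = L < +\infty$ (otherwise there is nothing to prove) and the liminf is attained as a limit. For each $m$, by definition of $I^m_Z(x_m)$ as an infimum, I may choose $f_m \in H^1_0[0,T]$ with
\[
\frac12 \|f_m\|_{H^1_0[0,T]}^2 + \mathcal{J}^m(x_m|(f_m,\hat{f}_m)) \leq I^m_Z(x_m) + \frac1m.
\]
For $m$ large, the right-hand side is bounded by some constant $L' > 0$, so in particular $\|f_m\|_{H^1_0[0,T]}^2 \leq 2L'$, i.e.\ $(f_m,\hat{f}_m) \in B_{2L'}$, where $B_{2L'}$ is the compact subset of $C_0[0,T]^2$ from Remark \ref{rem:B_L compatti}. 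Extracting a further subsequence, I may therefore assume $(f_m,\hat{f}_m) \to (f,\hat{f})$ in $C_0[0,T]^2$, with $(f,\hat f) \in B_{2L'}$.

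Next, I handle the $\mathcal{J}^m$ term. Using the identity $\mathcal{J}^m(y|(f,\hat f)) = J(y - \rho \Psi_m(f,\hat f)|\hat f)$ with $J(\cdot|\cdot)$ as in \eqref{eq:cond-rate-function}, I rewrite
\[
\mathcal{J}^m(x_m|(f_m,\hat f_m)) = J\bigl(x_m - \rho \Psi_m(f_m,\hat f_m)\,\bigl|\,\hat f_m\bigr).
\]
The central step is to show that $\Psi_m(f_m,\hat f_m) \to \Psi(f,\hat f)$ uniformly on $[0,T]$. Writing
\[
\|\Psi_m(f_m,\hat f_m) - \Psi(f,\hat f)\|_\infty \leq \|\Psi_m(f_m,\hat f_m) - \Psi(f_m,\hat f_m)\|_\infty + \|\Psi(f_m,\hat f_m) - \Psi(f,\hat f)\|_\infty,
\]
the first term vanishes as $m \to +\infty$ thanks to Lemma \ref{lemma:convergence Psi_m} (since $(f_m)_m$ is contained in $D_{2L'}$), while the second term vanishes by the continuity of $\Psi$ on $B_{2L'}$ established in Lemma \ref{lemma: ball-continuity Psi}. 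Combined with $x_m \to x$, this gives
$x_m - \rho \Psi_m(f_m,\hat f_m) \to x - \rho \Psi(f,\hat f)$ in $C_0[0,T]$.

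Now I apply the lower semicontinuity of $J(\cdot|\cdot)$ (Proposition \ref{prop:LDP continuity condition Uncorrelated}(c)) to the converging couple $(\hat f_m, x_m - \rho \Psi_m(f_m,\hat f_m)) \to (\hat f, x - \rho \Psi(f,\hat f))$, obtaining
\[
\liminf_{m\to+\infty} \mathcal{J}^m(x_m|(f_m,\hat f_m)) \geq J\bigl(x - \rho \Psi(f,\hat f)\,\bigl|\,\hat f\bigr) = \mathcal{J}(x|(f,\hat f)).
\]
Combined with the lower semicontinuity of the Cameron–Martin norm, $\|f\|_{H^1_0[0,T]}^2 \leq \liminf_m \|f_m\|_{H^1_0[0,T]}^2$, I conclude
\[
\liminf_{m\to+\infty} I^m_Z(x_m) \geq \frac12 \|f\|_{H^1_0[0,T]}^2 + \mathcal{J}(x|(f,\hat f)) \geq \mathcal{I}_Z(x),
\]
which is the claim. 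The main obstacle is the uniform convergence of $\Psi_m(f_m,\hat f_m)$ to $\Psi(f,\hat f)$, which requires simultaneously the uniform-in-$m$ control provided by Lemma \ref{lemma:convergence Psi_m} and the (non-trivial) continuity of the limit functional $\Psi$ on the compact set $B_{2L'}$ from Lemma \ref{lemma: ball-continuity Psi}.
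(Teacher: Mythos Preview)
Your argument is correct, and it differs substantially from the paper's own proof. The paper proceeds by a dichotomy on whether the sequence $(\|x_m\|_{H_0^1[0,T]})_m$ is bounded. In the bounded case it shows $|I_Z^m(x_m)-\mathcal{I}_Z(x_m)|\to 0$ by reusing the estimates of Lemma~\ref{lemma:pointwise convergence} over a common ball $D_C$ (where the radius $C$ depends on $\sup_m\|x_m\|_{H_0^1[0,T]}$), and then invokes the lower semicontinuity of $\mathcal{I}_Z$ from Lemma~\ref{lemma:semicontinuity I^Z}. In the unbounded case the paper shows directly that $I_Z^m(x_m)\to +\infty$ by splitting the infimum according to whether $\|f\|_{H_0^1[0,T]}^2$ is above or below $\|x_m\|_{H_0^1[0,T]}^{2u}$; it is precisely here that Assumption~\ref{ass:hp-sigma-mu-III} (the power-growth bound on $\sigma$ and $\mu$, via Remark~\ref{rem:mu-sigma-slow}) enters, to force $\mathcal{J}^m(x_m|(f,\hat f))$ to diverge.

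Your route sidesteps this dichotomy: by extracting near-minimisers $f_m$ with $\tfrac12\|f_m\|_{H_0^1[0,T]}^2$ controlled by $I_Z^m(x_m)+\tfrac1m$, you obtain compactness of $(f_m,\hat f_m)$ in $C_0[0,T]^2$ directly, without ever needing a bound on $\|x_m\|_{H_0^1[0,T]}$. The passage to the limit then rests only on Lemma~\ref{lemma:convergence Psi_m}, Lemma~\ref{lemma: ball-continuity Psi}, the lower semicontinuity of $J(\cdot|\cdot)$ from Proposition~\ref{prop:LDP continuity condition Uncorrelated}(c), and weak lower semicontinuity of the $H_0^1$-norm (which holds since bounded $H_0^1$-sequences converging in $C_0[0,T]$ converge weakly in $H_0^1$). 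None of these ingredients require Assumption~\ref{ass:hp-sigma-mu-III}. So your proof is not only shorter but strictly more general for this lemma: it shows that the identification $I_Z=\mathcal{I}_Z$ in Theorem~\ref{th:identification} does not in fact need the power-growth hypothesis, removing the extra restriction that the paper itself flags in the introduction as going beyond \cite{Gu2}.
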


\proof
	Suppose $ x_m \overset{C_0[0,T]}{\underset{m \to +\infty}{\longrightarrow}}x $.
	If   there exist $m_0>0$ such that $ (x_m)_{m \geq m_0}\subset C_0[0,T]\setminus H_0^1[0,T] $,   then
	$  \lim_{m \to +\infty}I_Z^m(x_m)= +\infty $
	and there is nothing to prove.

	Otherwise, we can suppose that $ (x_m)_{m \in \mathbb{N}}\subset H_0^1[0,T]. $ Now, there are two possibilities:
	\begin{enumerate}
		\item [\textit {(i)}] $ \sup_{m \geq 1} \lVert x_m\rVert_{H_0^1[0,T]}^2<+\infty;$
		\item [\textit {(ii)}] $ \sup_{m \geq 1} \lVert x_m\rVert_{H_0^1[0,T]}^2=+\infty.$
	\end{enumerate}

\textit {(i)} The sequence $ (x_m)_{m \in \mathbb{N}} $ is bounded in $ H_0^1[0,T] $, therefore there exist a constant $C>0$ (depending on $\sup_{m \geq 1} \lVert x_m\rVert_{H_0^1[0,T]}^2$), such that
$$I_Z^m(x_m)= \inf_{f \in D_{C}}\mathcal{H}_m((f,\hat{f}),x_m), \quad \mathcal{I}_Z(x_m)= \inf_{f \in D_{C}}\mathcal{H}((f,\hat{f}),x_m).$$
Then we have,
$$\begin{array}{ccl}
|I_Z^m(x_m)-\mathcal{I}_Z(x_m)|&=&\big| \inf_{f \in D_C}\mathcal{H}_m((f,\hat{f}),x_m)- \inf_{f \in D_C}\mathcal{H}((f,\hat{f}),x_m) \big|\\
&\leq& \sup_{f \in D_C}|\mathcal{H}_m((f,\hat{f}),x_m)-\mathcal{H}((f,\hat{f}),x_m)|.
\end{array}$$
Using similar computations as in Lemma \ref{lemma:pointwise convergence}, it follows that
$$ \lim_{m \to+\infty}[I_Z^m(x_m)-\mathcal{I}_Z(x_m)]=0.$$

Moreover, since $ \mathcal{I}_Z(\cdot) $ is lower semicontinuous from Lemma \ref{lemma:semicontinuity I^Z}, we have
\begin{eqnarray*}
 \liminf_{m \to+\infty}I_Z^m(x_m)&=&  \liminf_{m \to+\infty}[(I_Z^m(x_m)-\mathcal{I}_Z(x_m))+\mathcal{I}_Z(x_m)]\\
&=&  \liminf_{m \to+\infty}\mathcal{I}_Z(x_m)\geq \mathcal{I}_Z(x).
\end{eqnarray*}

\textit {(ii)} The sequence $ (x_m)_{m \in \mathbb{N}} $ is not bounded in $ H_0^1[0,T]$, therefore  we can suppose that
$\lim_{m \to+\infty}\lVert x_m\rVert_{H_0^1[0,T]}^2=+\infty.$
We will prove,  in this case, that
\begin{equation}\label{eq:limit I_Z^m infinite}
 \lim_{m\to+\infty}I_Z^m(x_m)=+\infty.
\end{equation}
For every $ u>0 $ we have,
\begin{equation}\label{eq:inf-in-out}
\begin{array}{ll}\displaystyle I_Z^m(x_m)&=\min\big\{\inf_{\lVert f\rVert_{H_0^1[0,T]}^2\leq  \lVert x_m\rVert_{H_0^1[0,T]}^{2u}}\mathcal{H}_m((f,\hat{f}),x_m), \inf_{\lVert f\rVert_{H_0^1[0,T]}^2>  \lVert x_m\rVert_{H_0^1[0,T]}^{2u}}\mathcal{H}_m((f,\hat{f}),x_m))\big\}\\
&\displaystyle\geq \min\big\{ \inf_{\lVert f\rVert_{H_0^1[0,T]}^{2}\leq  \lVert x_m\rVert_{H_0^1[0,T]}^{2u}}{\mathcal J}^m(x_m|(f,\hat{f})), \inf_{\lVert f\rVert_{H_0^1[0,T]}^{2}>  \lVert x_m\rVert_{H_0^1[0,T]}^{2u}}I_{(B,\hat{B})}(f,\hat{f})\big\}
\end{array}\end{equation}
where $ I_{(B,\hat{B})}(\cdot,\cdot) $ and $ \mathcal{J}^m(\cdot|(f,\hat{f})) $ are defined,  respectively, in (\ref{eq:couple rate function}) and (\ref{eq:Jm explicit}). Now we consider the two infima in (\ref{eq:inf-in-out}).
For the second one we have,
\begin{equation} \label{eq:inf-out}
\inf_{\lVert f\rVert_{H_0^1[0,T]}^2>  \lVert x_m\rVert_{H_0^1[0,T]}^{2u}}I_{(B,\hat{B})}(f,\hat{f})=\inf_{\lVert f\rVert_{H_0^1[0,T]}^2> \lVert x_m\rVert_{H_0^1[0,T]}^{2u}}\frac12\lVert f\rVert_{H_0^1[0,T]}^2\geq  \frac12\,\lVert x_m\rVert_{H_0^1[0,T]}^{2u}.\end{equation}
From  Assumption \ref{ass:hp-sigma-mu-III} and the Cauchy-Schwarz inequality, we have
\begin{eqnarray*}\label{eq:inequalities J^m}
&&{\cal J}^m(x_m|(f,\hat{f}))\geq  \frac1{2\bar{\rho}^2\,M^2\, \lVert f\rVert_{H_0^1[0,T]}^{2\alpha}}\bigg( \lVert x_m\rVert_{H_0^1[0,T]}^2-2 \int_0^T\dot{x}_m(t)(\mu(\hat{f}(t))+\rho\dot{\Psi}_m(f,\hat{f})(t))\,dt\bigg)\\
&\geq&  \frac1{2\bar{\rho}^2\,M^2\, \lVert f\rVert_{H_0^1[0,T]}^{2\alpha}}( \lVert x_m\rVert_{H_0^1[0,T]}^2-2M\,\sqrt T \lVert f\rVert_{H_0^1[0,T]}^{\alpha }\lVert x_m\rVert_{H_0^1[0,T]}
- 2 \rho\,M\,\lVert f\rVert_{H_0^1[0,T]}^{\alpha +1}\lVert x_m\rVert_{H_0^1[0,T]} ).
\end{eqnarray*}
Now we can choose $ u>0$ such that $ (\alpha+1)u<1$, therefore, since $\lVert f\rVert_{H_0^1[0,T]}^2\leq  \lVert x_m\rVert_{H_0^1[0,T]}^{2u} $, for large $m$ and a suitable $c>0$,
 one has
\begin{equation}\label{eq:inf-in}
\begin{array}{l}
{\mathcal J}^m(x_m|(f,\hat{f}))\geq
  \frac1{2\bar{\rho}^2\,M^2\, \lVert x_m\rVert_{H_0^1[0,T]}^{2\alpha\, u}}( \lVert x_m\rVert_{H_0^1[0,T]}^2-2M\,\sqrt T
  \lVert x_m\rVert_{H_0^1[0,T]}^{1+\alpha\, u}
- 2 \rho\,M\,\lVert x_m\rVert_{H_0^1[0,T]}^{(\alpha +1)u +1} )\\
\phantom{{\mathcal J}^m(x_m|(f,\hat{f}))}\geq c  \lVert x_m\rVert_{H_0^1[0,T]}^{2(1-\alpha u)}.
\end{array}\end{equation}
So  (\ref{eq:limit I_Z^m infinite}) follows from (\ref{eq:inf-in-out}), (\ref{eq:inf-out}), (\ref{eq:inf-in})  and then the proof is complete.
\cvd

We are ready to identify the rate function $ I_Z(\cdot)$ with $ \mathcal{I}_Z(\cdot). $
\begin{theorem}\label{th:identification}
	Let $ I_Z(\cdot) $ be the good rate function given by
	Theorem \ref{th:IZ}. Then,
	$$ I_Z(x)=\mathcal{I}_Z(x) $$
	for every $ x \in C_0[0,T] $, where $ \mathcal{I}_Z(\cdot) $ is given by (\ref{eq:limit I^mZ}).
\end{theorem}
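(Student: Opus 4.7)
The plan is to invoke the Baxter--Jain identification principle (Proposition \ref{prop:identification I}) with the functional $J=\mathcal{I}_Z$. Recall that Theorem \ref{th:IZ} already presents $I_Z$ as the common value $\underline I_Z=\overline I_Z$ built from the rate functions $I_Z^m$ of the exponentially good approximations $(Z^{n,m}-x_0)_{n\in\N}$. So the task reduces to checking that (a) $I_Z^m(x)\to\mathcal{I}_Z(x)$ for each $x\in C_0[0,T]$, and (b) whenever $x_m\to x$ in $C_0[0,T]$, we have $\liminf_{m\to+\infty} I_Z^m(x_m)\geq \mathcal{I}_Z(x)$; both conditions are available in the work already carried out in this section.

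For (a), the pointwise convergence is exactly Lemma \ref{lemma:pointwise convergence}: on $H_0^1[0,T]$ it follows from the representation of $I_Z^m(x)$ and $\mathcal{I}_Z(x)$ as infima over the same ball $D_{C_x}$ of $H_0^1[0,T]$, together with the uniform convergence $\Psi_m(f,\hat f)\to\Psi(f,\hat f)$ on $D_{C_x}$ supplied by Lemma \ref{lemma:convergence Psi_m} (itself relying on the Gulisashvili estimate \eqref{eq:Gu1}), while outside $H_0^1[0,T]$ both sides are $+\infty$. For (b), the liminf inequality is exactly the content of Lemma \ref{lemma:liminfI_Z^m}.

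Putting these two ingredients together, Proposition \ref{prop:identification I} immediately yields $I_Z(x)=\mathcal{I}_Z(x)$ for every $x\in C_0[0,T]$, which is the claim of Theorem \ref{th:identification}.

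The real obstacle in this identification is not at the level of the final wrap-up but inside Lemma \ref{lemma:liminfI_Z^m}: the hard case is when $\|x_m\|_{H_0^1[0,T]}^2\to+\infty$, where one must show $I_Z^m(x_m)\to+\infty$. There the use of Assumption \ref{ass:hp-sigma-mu-III} (the power growth $\sigma(x)+|\mu(x)|\leq M_1+M_2|x|^\alpha$) is essential: splitting the infimum according to whether $\|f\|_{H_0^1[0,T]}^2\leq \|x_m\|_{H_0^1[0,T]}^{2u}$ or not, and choosing $u>0$ with $(\alpha+1)u<1$, one forces both competing terms in \eqref{eq:inf-in-out} to diverge. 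This power-growth hypothesis, which is not required in \cite{Gu2}, is precisely what enables the explicit identification of the rate function in our framework; without it, one could still invoke the Baxter--Jain theorem but only at the price of keeping the rate function in the abstract $\underline I_Z/\overline I_Z$ form supplied by Theorem \ref{th:IZ}.
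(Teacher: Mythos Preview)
Your proposal is correct and follows exactly the same route as the paper: invoke Proposition \ref{prop:identification I} with $J=\mathcal{I}_Z$, verifying its two hypotheses via Lemma \ref{lemma:pointwise convergence} (pointwise convergence) and Lemma \ref{lemma:liminfI_Z^m} (the liminf inequality). Your additional commentary on the role of Assumption \ref{ass:hp-sigma-mu-III} in the unbounded case of Lemma \ref{lemma:liminfI_Z^m} accurately reflects the paper's own argument and its discussion of why this growth condition is needed here but not in \cite{Gu2}.
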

\proof
From Lemma \ref{lemma:pointwise convergence} and Lemma \ref{lemma:liminfI_Z^m} the  hypotheses of  Proposition \ref{prop:identification I} are fulfilled.  This proves the theorem.
\cvd

\begin{remark}\label{rem:onedim}\rm{\bf[One dimensional case] } Consider  the function  $F:C_0[0,T]\to \R$ defined by $F(x)=x_T$.  $F$ is a continuous function. By the contraction principle follows  that the family of random variables
$(Z_T^{n}-x_0)_{n\in \N}$ ($ Z_T^{n} $  defined in (\ref{eq:logprice-corr-scaled})) satisfies a LDP on $ (\R, \mathscr{B}(\R)) $, with the speed $ \ep_n^{-2} $ and the good rate function $ I_{Z_T} (\cdot)$ given by
\begin{eqnarray*}I_{Z_T}(y)&=&\inf_{x\in H_0^1[0,T]: F(x)=y}\,\,\inf_{f \in H_0^1[0,T]}  \Big(\frac12 \lVert f\rVert_{H_0^1[0,T]}^2+\frac12\int_0^T\bigg( \frac{\dot{x}(t)-\mu(\hat{f}(t))-\rho \dot{\Psi}_m(f,\hat{f})(t)}{\bar{\rho}\sigma(\hat{f}(t))}\bigg)^2dt\Big)\\
&=&
\inf_{f\in H_0^1[0,T]}\,\,\inf_{x \in H_0^1[0,T]: F(x)=y}  \Big(\frac12 \lVert f\rVert_{H_0^1[0,T]}^2+\frac12\int_0^T\bigg( \frac{\dot{x}(t)-\mu(\hat{f}(t))-\rho \dot{\Psi}(f,\hat{f})(t)}{\bar{\rho}\sigma(\hat{f}(t))}\bigg)^2dt\Big)\\&=&
\inf_{f\in H_0^1[0,T]}\,\,\inf_{x \in H_0^1[0,T]: F(x)=y}  \Big(\frac12 \lVert f\rVert_{H_0^1[0,T]}^2+{\mathcal J}(x|(f,\hat f)\Big).
\end{eqnarray*}

So we have to calculate,
\begin{equation}\label{eq:inf-onedim}\inf_{x \in H_0^1[0,T]: F(x)=y}  {\mathcal J}(x|(f,\hat f)=\inf_{x \in H_0^1[0,T]: F(x)=y} \frac12\int_0^T\bigg( \frac{\dot{x}(t)-\mu(\hat{f}(t))-\rho \dot{\Psi}(f,\hat{f})(t)}{\bar{\rho}\sigma(\hat{f}(t))}\bigg)^2dt.\end{equation}
Recall  that ${\mathcal J}(\cdot|(f,\hat f)$ is the rate function  	of the family  $ ((Z_t^{n, (f,\hat{f})})_{t\in [0,T]})_{n \in \mathbb{N}} $  ($f\in H_0^1[0,T]$) defined in (\ref{eq:Z(f,hatf)}). This is
 a family of  Gaussian diffusion processes, then we can  apply the known results  for Gaussian processes shown in Section \ref{sect:ldp}. For every $ n \in \mathbb{N} $, $ ((Z_t^{n, (f,\hat{f})})_{t\in [0,T]})_{n \in \mathbb{N}}, $ is a Gaussian process with mean function,
$$ m^{n,f}(t)= \int_0^t\Big(\mu(\hat{f}(s))-\rho \dot{\Psi}(f,\hat{f})(s)\Big)\,ds, \quad t \in [0,T] $$
and covariance function,
$$ k^{n,f}(t,s)=\mbox{Cov}(Z^{n,(f,\hat{f})}_t,Z^{n,(f,\hat{f})}_s)=\displaystyle\bar{\rho}^2\ep_n^2\int_0^{t\wedge s}\sigma(\hat{f}(u))^2\,du=\ep_n^2k^f(t,s),  \quad s,t \in [0,T].
$$
It is not hard to prove that $ (({Z}_t^{n, (f,\hat{f})})_{t \in [0,T]})_{n \in \mathbb{N}} $ satisfies (also) hypotheses of Theorem \ref{th:ldp-gaussian} and then  the family $ (({Z}^{n,(f,\hat{f})}_t)_{t \in [0,T]})_{n \in \mathbb{N}}$ satisfies a LDP on $ C_0[0,T] $ with the inverse speed $ \ep_n^2 $ and the good rate function ${\mathcal J}(\cdot|(f,\hat f)$ given by,
$$ {\mathcal J}(x|(f,\hat f))=\begin{cases}
\displaystyle\frac 12 \Big\lVert x-\int_0^\cdot\big( {\mu(\hat{f}(t))+\rho \dot{\Psi}(f,\hat{f})(t)}\big)\, dt\Big\rVert^2_{\mathscr{H}^f} & x-\int_0^\cdot\big( {\mu(\hat{f}(t))+\rho \dot{\Psi}(f,\hat{f})(t)}\big)\, dt\in \mathscr{H}^f\\
+\infty &x-\int_0^\cdot\big( {\mu(\hat{f}(t))+\rho \dot{\Psi}(f,\hat{f})(t)}\big)\, dt\notin \mathscr{H}^f
\end{cases} $$
where $ \mathscr{H}^f $ and $ \lVert \cdot\rVert_{\mathscr{H}^f} $ denote, respectively, the reproducing kernel Hilbert space and the related norm associated to the covariance function $ k^f. $
The set  set of paths
$$ y(u)=\int_0^T k^f(u,v)d\lambda(v), \quad u \in [0,T], \lambda \in \mathscr{M}[0,T].$$
is dense in $ \mathscr{H}^f $.
Therefore  in the infimum  (\ref{eq:inf-onedim}) we can consider the functions
$$ x(u)-\int_0^u\big( {\mu(\hat{f}(t))+\rho \dot{\Psi}(f,\hat{f})(t)}\big)\, dt=\int_0^T k^f(u,v)d \lambda(v), \quad u \in [0,T], $$
for some $ \lambda \in \mathscr{M}[0,T]$,
with the additional constraint that $ x(T)=y$. Therefore  we have to minimize the functional
$$ \frac12 \displaystyle\int_0^T\int_0^Tk^f(u,v)d\lambda(u)d\lambda(v),
$$
 (with respect to the measure $ \lambda $) with the additional constraint
$$ \int_0^Tk^f(T,v)d\lambda(v)+\int_0^T\big( {\mu(\hat{f}(t))+\rho \dot{\Psi}(f,\hat{f})(t)}\big)\, dt- y=0. $$
By using   the method of Lagrange multipliers the measure $ \lambda $ must satisfy
$$ \int_0^T\int_0^T k^f(u,v)d\lambda(u)d\eta(v)= \beta\displaystyle\int_0^T k^f(T,v)d\eta(v) , $$
i.e.
$$ \displaystyle\int_0^T\bigg( \int_0^T k^f(u,v)d\lambda(u)-\beta k^f(T,v)\bigg) d\eta(v)=0 $$
for every $ \eta \in \mathscr{M}[0,T], $ for some $ \beta \in \mathbb{R}. $
 Since $ \bigg( v\mapsto \int_0^Tk^f(u,v)d\lambda(u)-\beta k^f(T,v)\bigg) $ is a continuous function, it must be
\begin{equation}\label{eq:beta}
\int_0^Tk^f(u,v)d\lambda(u)-\beta k^f(T,v)=0,
\end{equation}
for all $ v \in [0,T] $. Therefore  the solution is
$${\bar\lambda}= \beta\delta_{\{T\}} , $$
$ \delta_{\{T\}} $ standing for the Dirac mass in $T$.
From  equality (\ref{eq:beta}) with $ v=T, $ we find
$$ \beta=\frac{\int_0^T k^f(u,T)d\lambda(u)}{k^f(T,T)}=\frac{y-\int_0^T\big( {\mu(\hat{f}(t))+\rho \dot{\Psi}(f,\hat{f})(t)}\big)\, dt}{k^f(T,T)}. $$
Then
$${\bar\lambda}=\frac{y-\int_0^T\big( {\mu(\hat{f}(t))+\rho \dot{\Psi}(f,\hat{f})(t)}\big)\, dt}{k^f(T,T)}\delta_{\{T\}} , $$
 satisfies the Lagrange multipliers problem, and it is therefore a critique point for the functional we want to minimize. Since it is a strictly convex functional restricted on a linear subspace of $ \mathscr{M}[0,T] $, it is still strictly convex, and thus the critique point $ \bar{\lambda} $ is actually its unique point of minimum. Hence, we have
\begin{eqnarray*}\inf_{x\in H_0^1[0,T]: F(x)=y}\frac12\int_0^T\bigg( \frac{\dot{x}(t)-\mu(\hat{f}(t))-\rho \dot{\Psi}_m(f,\hat{f})(t)}{\bar{\rho}\sigma(\hat{f}(t))}\bigg)^2dt&=&
\frac12 \displaystyle\int_0^T\int_0^Tk^f(u,v)d\bar\lambda(u)d\bar\lambda(v)\\
&=&\frac12 \frac{\Big(y-\int_0^T(\mu(\hat{f}(t))+\rho \dot{\Psi}(f,\hat{f})(t))\,dt\Big)^2}{\int_0^T \bar{\rho}^2\sigma^2(\hat{f}(t)) dt}.\end{eqnarray*}
Therefore, we have
$$I_{Z_T}(y)=\inf_{f\in H_0^1[0,T]} \Big\{\frac12 \lVert f\rVert_{H_0^1[0,T]}^2+\frac12 \frac{\big(y-\int_0^T(\mu(\hat{f}(t))+\rho \dot{\Psi}(f,\hat{f})(t))\,dt\big)^2}{\int_0^T \bar{\rho}^2\sigma^2(\hat{f}(t)) dt}\Big\}.$$
The same result as in \cite{Gu1}.

\end{remark}

\subsection{Asymptotic estimate for the crossing probability}\label{sect:cross-prob}
Here we assume the following dynamics for the asset price process
$$\begin{cases}
dS_t=S_t\sigma(\hat{B}_t)d(\bar{\rho}W_t+\rho B_t), \quad 0\leq t \leq T\\
S_0=1
\end{cases}$$
where the model has been normalized to have  $ S_0=1 $ and $ \mu=0 $.
Then, the unique solution to the previous equation is given by
$$S_t=\exp\bigg\{-\displaystyle\frac12\int_0^t\sigma(\hat{B}_s)^2\,ds+\bar{\rho}\int_0^t\sigma(\hat{B}_s)\,dW_s+\rho\int_0^t\sigma(\hat{B}_s)\,dB_s\bigg\}  $$
for $ 0 \leq t \leq T$.
We observe that the process $ (S_t)_{t \in [0,T]} $ is a strictly positive local martingale, and hence a supermartingale. If we suppose that  $ \sigma $ has sub-linear growth, then $ (S_t)_{t\in [0,T]} $ is a martingale
 (for further details, see Lemma 9 in \cite{Gu1}). Then, in such a case, $ \mathbb{P} $ is a risk-neutral measure.

Let us consider the case of an up-in bond, i.e. an option that pays one unit of num\'{e}raire if the underlying asset reached a given up-barrier $ U>1. $ This is a path-dependent option whose pay-off is
$$ h_{up}^{in}=\mathbf{1}_{\{\sup_{t \in [0,T]} S_t \geq U\}}=\mathbf{1}_{\{\tau^U \leq T\}} ,\quad
	 \quad
	  \tau^U=\inf\{t \in [0,T]: S_t \geq U \} .$$
	Then, the up-in bond pricing functions in $ t=0 $ is defined by
	$$ \mathbb{E}\big[ \mathbf{1}_{\big\{\sup_{t \in [0,T]} S_t \geq U\big\}}\big]=\mathbb{P}\big(\sup_{t \in [0,T]}S_t \geq U \big)=\mathbb{P}( \tau^U \leq T). $$
	As an application of the  results of the previous section we obtain the asymptotic behavior of the small-noise up-in bond pricing function, that is
	$$ P_n=\mathbb{E}\big[ \mathbf{1}_{\big\{\sup_{t \in [0,T]} S_t^n \geq U\big\}}\big]=\mathbb{P}( \tau_n^U \leq T) $$
	where $ (S^n_t)_{t \in [0,T]} $ is the asset price process in the scaled model
$$\begin{cases}dS_t^n=\ep_nS^n_t\sigma(\ep_n\hat{B}_t)d(\bar{\rho}W_t+ \rho B_t) \\ S_0=1 \end{cases}$$
	and
	$$ \tau^U_n=\inf\{t \in [0,T]: S_t^n \geq U \} . $$
	This problem is nothing but the asymptotic estimate of level crossing for the family $ ((S_t^n)_{t \in [0,T]})_{n \in \mathbb{N}} .$ In this case the probability $ P_n $ has a large deviation limit,
	\begin{equation} \lim_{n \to +\infty}\ep_n^2\log(P_n)=-I_U
	\end{equation}
	for some quantity $ I_U>0. $
	We have that
	$$
	\{\tau_n^U\leq T\} =\bigg\{\displaystyle\sup_{t \in [0,T]} S_t^n \geq  U\bigg\}
	= \bigg\{\sup_{t \in [0,T]} Z_t^n-\log U \geq 0\bigg\}
	$$
	where $((Z_t^n)_{t \in [0,T]})_{n \in \mathbb{N}} $ is the family of the log-price processes defined by
	$$Z_t^n=\bigg\{-\displaystyle\frac12\ep_n^2\int_0^t\sigma(\ep_n\hat{B}_s)^2\,ds+\ep_n\bar{\rho}\int_0^t\sigma(\ep_n\hat{B}_s)\,dW_s+\ep_n\rho\int_0^t\sigma(\ep_n\hat{B}_s)\,dB_s\bigg\}   .$$
	We have already shown in Theorems \ref{th:IZ} and \ref{th:identification} that  that $((Z_t^n)_{t \in [0,T]})_{n \in \mathbb{N}} $ satisfies a LDP with
the speed $ \ep_n^{-2} $ and the good rate function
	$$I_{Z}(x)=\begin{cases}
	\displaystyle\inf_{f \in H_0^1[0,T]}\mathcal{H}((f,\hat{f}),x) & x\in H_0^1[0,T] \\
\phantom{\inf}+\infty &  x\notin H_0^1[0,T]
\end{cases}
$$
Then, we have
$$  -\displaystyle\inf_{x \in \mathring{A}}I_Z(x) \leq \liminf_{n \to +\infty}\ep_n^2\log(P_n)\leq \limsup_{n \to +\infty}\ep_n^2\log(P_n)\leq -\inf_{x \in \bar{A}}I_Z(x)$$
where
$$ A=\bar{A}=\bigg\{x \in C_0([0,T]): \displaystyle\sup_{t \in [0,T]} x(t)- \log U \geq 0 \bigg\} \quad
 \mathring{A}=\bigg\{x \in C_0([0,T]): \displaystyle\sup_{t \in [0,T]} x(t)- \log U>0  \bigg\}. $$

It is a simple calculation to show that,
$$\inf_{x \in \mathring{A}}I_Z(x)=\displaystyle\inf_{x \in \bar{A}}I_Z(x),$$
therefore
$$ \displaystyle\lim_{n \to+\infty} \ep_n^2\log(P_n)=-\inf_{x \in A}I_Z(x)=-I_U.$$
 In what follows we will compute the quantity $ I_U $. We have to minimize the  rate function (as in  Remark \ref{rem:onedim}).
Define
$$ A_t=\{x \in C_0[0,T]: x(t)-\log U=0\}, $$
then,
$$ A=\bigcup_{0 \leq t \leq T} A_t. $$
Therefore, with the same notations as in Remark \ref{rem:onedim},
$$\begin{array}{ccl}
\displaystyle\inf_{x \in A}I_Z(x)&=&\displaystyle\inf_{t \in [0,T]}\inf_{x \in A_t}I_Z(x)\\
&=& \displaystyle\inf_{t \in [0,T]}\inf_{x \in A_t}\inf_{f \in H_0^1[0,T]}\{I_{(B,\hat{B})}(f,\hat{f})+ \mathcal{J}(x|(f,\hat{f}))\}\\
&=& \displaystyle\inf_{f \in H_0^1[0,T]}\inf_{t \in [0,T]}\inf_{x \in A_t}\bigg\{\frac12 \lVert f\rVert^2_{H_0^1[0,T]}+ \frac12 \lVert x- \rho\Psi(f,\hat{f})\rVert^2_{\mathscr{H}^f}\bigg\}.
\end{array}$$

The set  set of paths
$$ y(u)=\int_0^T k^f(u,v)d\lambda(v) \quad u \in [0,T], \lambda \in \mathscr{M}[0,T]$$
is dense in $ \mathscr{H}^f $.
Therefore in  the infimum
$$ \displaystyle\inf_{x \in A_t}\bigg\{\frac12 \lVert f\rVert^2_{H_0^1[0,T]}+ \frac12 \lVert x- \rho\Psi(f,\hat{f})\rVert^2_{\mathscr{H}^f}\bigg\} $$
we can consider the functions
$$ x(u)=\rho\Psi(f,\hat{f})(u)+\displaystyle\int_0^T k^f(u,v)d \lambda(v), \quad u \in [0,T], $$
for some $ \lambda \in \mathscr{M}[0,T]$, with the additional constraint that $ x(t)=\log U $.
The solution (calculations are the same as in Remark \ref{rem:onedim})
is $$\bar{\lambda}= \displaystyle\frac{\log U -\rho\Psi(f,\hat{f})(t)}{k^f(t,t)}\delta_{\{t\}} , $$
and then
\begin{eqnarray*} I_U&=&\displaystyle\inf_{x \in A}I_Z(x)=\displaystyle\inf_{f \in H_0^1[0,T]}\inf_{t \in [0,T]}\bigg\{\frac12 \lVert f\rVert^2_{H_0^1[0,T]} + \frac12 \frac{(\log U-\rho\Psi(f,\hat{f})(t))^2}{k^f(t,t)}\bigg\}\\
&=&
\inf_{f \in H_0^1[0,T]}\inf_{t \in [0,T]} \Big(\frac12 \lVert f\rVert_{H_0^1[0,T]}^2+\frac12 \frac{\big(\log U-\int_0^t\rho \dot{\Psi}(f,\hat{f})(u)\,du\big)^2}{\int_0^t \bar{\rho}^2\sigma^2(\hat{f}(u)) du}\Big) \end{eqnarray*}
\begin{remark}\rm In this way we have established a large deviation estimation of the probability that the asset price process $ (S_t)_{t \in [0,T]} $ crosses the upper barrier $ U. $ The same arguments can be applied if we consider a lower or a double barrier.
\end{remark}
%


\section{More general Volterra processes}\label{sect:VolterraGen}
In this section we  extend the  results obtained for the
log-price process $ Z^{n}$ when $\hat B^n=\ep_n \hat B$  to a more general context
in which $(\hat B^n)_{n\in \N}$ is a  family of Volterra processes which satisfies a LDP.
\subsection{Uncorrelated case}
Notice that we never used the fact that $\hat B^n=\ep_n \hat B$, therefore the same results can be easily deduced for more general  families  of Volterra  processes.
We consider a family
of continuous Volterra type Gaussian processes (see Definition \ref{def:Volterra process}) of the form
\begin{equation}\label{eq:Volterra B^n gen} \hat{B}^n_t=\displaystyle\int_0^t K^n(t,s)dB_s \quad 0\leq t \leq T, \end{equation}
where $K^n$ is a suitable kernel. The covariance function of the process $ \hat{B}^n$, for every $ n \in \mathbb{N}, $ is given by
$$ k^n(t,s)=\displaystyle\int_0^{t\wedge s}K^n (t,u)K^n(s,u) \, du \quad \mbox{for } t,s \in [0,T].
$$
Under suitable conditions on  the covariance functions or  on the  kernels,  the hypotheses of   Theorem \ref{th:ldp-gaussian} are satisfied and a LDP holds.

\begin{assumption}\label{ass:cov}
{\rm \bf[Assumptions on the covariance]}

\bf (a) \rm	There exist an infinitesimal function $ \ep_n $ and an asymptotic covariance function $ {k} $ (regular enough to be the covariance function of a continuous centered Gaussian process)
such that
	$$ {k}(t,s)=\displaystyle\lim_{n \to +\infty} \frac{k^n(t,s)}{\ep^2_n} $$
uniformly for $ t,s \in [0,T].$

\bf (b)	\rm There exist constants $ \beta,M>0 $, such that, for every $ n \in \mathbb{N}$
$$ \displaystyle\sup_{s,t \in [0,T], s\neq t} \frac{|k^n(t,t)+k^n(s,s)-2k^n(t,s)|}{\ep^2_n|t-s|^{2\beta}}\leq M. $$
\end{assumption}

.

\begin{assumption}\label{ass:ker} {\rm \bf[Assumptions on the kernel]}

	\bf (a) \rm There exist an infinitesimal function $ \ep_n $ and a kernel $ K $ (regular enough to be the kernel of a continuous Volterra process) such that
\begin{equation} \label{eq:ker-limit}\displaystyle\lim_{n \to +\infty} \frac{K^n(t,s)}{{\ep_n}}=K(t,s) \end{equation}
	uniformly for $ t,\,s \in[0,T]. $
	
	\bf (b) \rm There exist  constants $ C,\beta >0 $ such that one has
	$$ \frac {1}{\ep_n^2} \displaystyle\int_0^T (K^n(t,u)-K^n(s,u))^2 \, du \leq C|t-s|^{2\beta}.$$
\end{assumption}
Assumptions \bf(a) \rm guarantees that (\ref{eq:covlimit}) holds (see, for example, \cite{Gio-Pac}). Assumptions \bf(b) \rm guarantees exponential tightness of the family
 (see, for example, \cite{Mac-Pac})
The following theorem holds.

\begin{theorem}
	Let $ n \in \mathbb{N} ,$ and $ \hat{B}^n $ be a Volterra process as in (\ref{eq:Volterra B^n gen}).
	Suppose the family $ (K^n)_{n \in \mathbb{N}} $ (respectively $(k^n)_{n \in \mathbb{N}}$) satisfies Assumption \ref{ass:ker} (respectively Assumption \ref{ass:cov}).
	Then, the family of Volterra processes $((\hat{B}^n_t)_{t \in [0,T]})_{n \in \mathbb{N}}  $ satisfies a large deviation principle on $ C_0[0,T] $, with the inverse speed $ \ep_n^{-2} $ and the good rate function
	$$ I_{\hat{B}}(f)=\begin{cases}
\frac 12 \lVert f \rVert_{ {\mathscr{H}_{\hat B}}}^2 \quad f\in  {\mathscr{H}_{\hat B}}\\
	+\infty \qquad \quad f\notin  {\mathscr{H}_{\hat B}}
	\end{cases} $$
	where $  {\mathscr{H}_{\hat B}} $ and $ \lVert \cdot \rVert_{ {\mathscr{H}_{\hat B}}} $ denote, respectively, the reproducing kernel Hilbert space and the related norm associated to the covariance function
$$ k(t,s)=\displaystyle\int_0^{t\wedge s}K (t,u)K(s,u) \, du \quad \mbox{for } t,s \in [0,T].
$$
\end{theorem}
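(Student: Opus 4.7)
The plan is to view Theorem \ref{th:ldp-gaussian} as the engine that does all the work, and to check its three hypotheses (vanishing mean functional, convergence of covariance functionals, exponential tightness) in the present setting. The resulting rate function is then identified with the announced RKHS norm by applying Theorem \ref{th:cramer-transform} to the limit covariance $k$.

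First I would treat the case where $(k^n)_{n\in\N}$ satisfies Assumption \ref{ass:cov}. Each $\hat B^n$ is centered Gaussian, so $\E[\langle\lambda,\hat B^n\rangle]=0$ for every $\lambda\in\cl M[0,T]$, and
\[
\ep_n^{-2}\Var(\langle\lambda,\hat B^n\rangle)=\int_0^T\!\int_0^T\frac{k^n(t,s)}{\ep_n^2}\,d\lambda(t)\,d\lambda(s)\;\underset{n\to\infty}{\longrightarrow}\;\int_0^T\!\int_0^T k(t,s)\,d\lambda(t)\,d\lambda(s),
\]
by Assumption \ref{ass:cov}(a) together with the finiteness of the total variation measure $|\lambda|$ on the compact $[0,T]$. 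This gives condition (\ref{eq:covlimit}) with $\gamma_n=\ep_n^{-2}$. For exponential tightness, the bound
\[
\E\bigl[(\hat B^n_t-\hat B^n_s)^2\bigr]=k^n(t,t)+k^n(s,s)-2k^n(t,s)\le M\ep_n^2|t-s|^{2\beta}
\]
from Assumption \ref{ass:cov}(b) places the rescaled second-moment increments of $\hat B^n$ exactly in the form required by Proposition 2.1 in \cite{Mac-Pac} (quoted at the end of Section \ref{sect:ldp}), yielding exponential tightness on $C_0[0,T]$ at speed $\ep_n^{-2}$. Theorem \ref{th:ldp-gaussian} then produces the LDP at speed $\ep_n^{-2}$ with rate function equal to the Cram\'er transform of $\lambda\mapsto\frac12\int\!\int k\,d\lambda\,d\lambda$, which by Theorem \ref{th:cramer-transform} is $\frac12\|\cdot\|_{\mathscr{H}_{\hat B}}^2$ on $\mathscr{H}_{\hat B}$ and $+\infty$ elsewhere.

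To handle the kernel version I would show that Assumption \ref{ass:ker} implies Assumption \ref{ass:cov}. The Volterra condition $K^n(t,u)=0$ for $u>t$ gives
\[
\frac{k^n(t,s)}{\ep_n^2}=\int_0^{t\wedge s}\frac{K^n(t,u)}{\ep_n}\,\frac{K^n(s,u)}{\ep_n}\,du,
\]
and the uniform convergence in \ref{ass:ker}(a), combined with the uniform boundedness of $K^n/\ep_n$ on $[0,T]^2$ that it entails, yields uniform convergence of the right-hand side to $k(t,s)$, i.e.\ \ref{ass:cov}(a). For the H\"older bound, the It\^o isometry gives
\[
k^n(t,t)+k^n(s,s)-2k^n(t,s)=\int_0^T\bigl(K^n(t,u)-K^n(s,u)\bigr)^2\,du,
\]
and Assumption \ref{ass:ker}(b) immediately yields \ref{ass:cov}(b). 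The covariance case then applies.

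The main place where care is required is in passing from the uniform convergence $\ep_n^{-2}k^n\to k$ on $[0,T]^2$ to convergence of the double integrals against an arbitrary signed measure $\lambda\in\cl M[0,T]$; this uses only that $[0,T]$ is compact and $|\lambda|$ is finite, so that uniform convergence of continuous integrands suffices. The remaining ingredients — centered Gaussianity for the mean condition, Proposition 2.1 of \cite{Mac-Pac} for exponential tightness, and Theorem \ref{th:cramer-transform} for the RKHS identification — are all directly packaged in Section \ref{sect:ldp}, so no further obstacle arises.
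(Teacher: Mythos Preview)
Your proposal is correct and follows exactly the approach the paper intends: the paper states just before the theorem that ``Assumptions (a) guarantee that (\ref{eq:covlimit}) holds (see, for example, \cite{Gio-Pac}). Assumptions (b) guarantee exponential tightness of the family (see, for example, \cite{Mac-Pac})'', and then invokes Theorem \ref{th:ldp-gaussian}. Your write-up is simply a more explicit version of this sketch, including the natural (and correct) reduction of Assumption \ref{ass:ker} to Assumption \ref{ass:cov}, which the paper leaves implicit.
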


In this case under hypotheses of  Theorem \ref{th:main-uncorr}, we have (from Theorem \ref{th:Chaganty})
that 	a large deviation principle with the speed $ \ep^{-2}_n $ and the good rate function
$$ I_X(x)=\begin{cases}\displaystyle \inf_{\varphi\in \mathscr{H}_{\hat B}}\left[\frac12 \lVert \varphi\rVert_{\mathscr{H}_{\hat B}}^2 + \frac12  \int_0^T\Bigg(\frac{\dot{x}(t)-\mu(\varphi(t))}{\sigma(\varphi(t))} \Bigg)^2 \,dt \right]& x \in \mathscr{H}_{\hat B} \\
		\displaystyle +\infty & x \notin \mathscr{H}_{\hat B}
		\end{cases}
		$$
holds for the family $ (X^{n}-x_0)_{n \in \mathbb{N}} $, where for every $ n \in \mathbb{N} $, $ (X_t^{n})_{t \in [0,T]} $ is defined from (\ref{eq:logprice-uncorr-scaled}).

From Remark \ref{rem:LDP Volterra}, if $\hat f(t)=\int_0^t K(t,s)\dot{f}(s)ds$, we have
$$ I_X(x)=\begin{cases}\displaystyle \inf_{f\in H_0^1[0,T]}\left[\frac12 \lVert f\rVert_{H_0^1[0,T]}^2 + \frac12  \int_0^T\Bigg(\frac{\dot{x}(t)-\mu(\hat{f}(t))}{\sigma(\hat{f}(t))} \Bigg)^2 \,dt \right]& x \in H_0^1[0,T] \\
		\displaystyle +\infty & x \notin H_0^1[0,T]
		\end{cases}
		$$

\subsection{Correlated case}
Now we state    a large deviation principle for the couple $ (\ep_n B,\hat{B}^n)_{n \in \mathbb{N}} $.
First observe that $ (\ep_n B,\hat{B}^n) $ is a Gaussian process (for details see for example \cite{For-Zha}) and therefore the following theorem is an application of Theorem 3.4.5 in \cite{Deu-Str}.

\begin{theorem}\label{th:LDP couple-gen}
 Let $ \ep: \mathbb{N} \to \mathbb{R}_+ $ be an infinitesimal function. Suppose Assumption  \ref{ass:ker}  is fulfilled.
	Then $ ((\ep_nB, \hat{B}^n))_{n \in \mathbb{N}} $ satisfies a large deviation principle on $C_0[0,T]^2$ with the speed $ \ep_n^{-2} $ and the good rate function
	$$
		I_{(B,\hat{B})} (f,g)=
		 \begin{cases}
		\displaystyle \frac12  \int_0^T \dot{f}(s)^2 \, ds & (f,g) \in \cl H_{(B,\hat B)}\\
		\displaystyle +\infty & (f,g)\in C_0[0,T]^2\setminus\cl H_{(B,\hat B)}
		\end{cases}
		$$
where
$$\cl H_{(B,\hat B)}=\{(f,g) \in C_0[0,T]^2: f \in H_0^1[0,T], \, g(t)=\int_0^tK(t,u)\dot{f}(u)\,du, \quad 0\leq t \leq T\}$$
and $K$ is defined from equation (\ref{eq:ker-limit}).
\end{theorem}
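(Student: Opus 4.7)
The plan is to apply Theorem \ref{th:ldp-gaussian} on the product space $C_0[0,T]^2$ to the family of centered Gaussian processes $((\ep_nB,\hat B^n))_{n\in\N}$. Three ingredients have to be checked: exponential tightness at the speed $\ep_n^{-2}$, vanishing of the means in the limit (automatic since each component is centered), and the existence of the required limit of $\ep_n^{-2}\Var\bigl(\langle\lambda_1,\ep_nB\rangle+\langle\lambda_2,\hat B^n\rangle\bigr)$ for every $(\lambda_1,\lambda_2)\in\cl M[0,T]\times\cl M[0,T]$.

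Exponential tightness of the couple reduces to exponential tightness of each marginal. The family $(\ep_nB)_{n\in\N}$ is exponentially tight at the speed $\ep_n^{-2}$ by Schilder's theorem. For $(\hat B^n)_{n\in\N}$, Assumption \ref{ass:ker}(b) gives
$$\Var(\hat B^n_t-\hat B^n_s)=\int_0^T(K^n(t,u)-K^n(s,u))^2\,du\leq C\,\ep_n^2\,|t-s|^{2\beta},$$
and the required exponential tightness follows from Proposition 2.1 in \cite{Mac-Pac}. Taking Cartesian products of the two families of compact sets yields exponential tightness of the couple.

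For the limit covariance, fix $(\lambda_1,\lambda_2)$. Using $\Cov(\ep_nB_s,\hat B^n_t)=\ep_n\int_0^{s\wedge t}K^n(t,u)\,du$, a straightforward Fubini computation gives
$$\ep_n^{-2}\Var\bigl(\langle\lambda_1,\ep_nB\rangle+\langle\lambda_2,\hat B^n\rangle\bigr)=A+B_n+2C_n,$$
where $A=\int_0^T\!\int_0^T(s\wedge t)\,d\lambda_1(s)d\lambda_1(t)$ is constant in $n$, $B_n=\ep_n^{-2}\int\!\int k^n(s,t)\,d\lambda_2(s)d\lambda_2(t)$, and $C_n=\ep_n^{-1}\int\!\int\bigl(\int_0^{s\wedge t}K^n(t,u)\,du\bigr)\,d\lambda_1(s)d\lambda_2(t)$. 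Assumption \ref{ass:ker}(a) furnishes the uniform convergence $K^n/\ep_n\to K$ on $[0,T]^2$, whence $k^n/\ep_n^2\to k(s,t)=\int_0^{s\wedge t}K(s,u)K(t,u)\,du$ uniformly as well. Dominated convergence then identifies the limit of the right-hand side with the covariance functional of the centered Gaussian process $(B,\hat B)$ with $\hat B_t=\int_0^tK(t,u)\,dB_u$.

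Theorem \ref{th:ldp-gaussian} now produces a large deviation principle on $C_0[0,T]^2$ with speed $\ep_n^{-2}$ and good rate function $\tfrac12\|\cdot\|_{\mathcal H}^2$, where $\mathcal H$ is the RKHS of the limit covariance. Remark \ref{rem:RKHS-couple} identifies this RKHS with $\cl H_{(B,\hat B)}$ defined in (\ref{eq:H2}) and supplies the explicit form $\tfrac12\int_0^T\dot f(s)^2\,ds$ of the corresponding half-squared norm, so the rate function agrees with $I_{(B,\hat B)}$ in the statement. The main technical step is the combination of Assumption \ref{ass:ker}(a) with Fubini to obtain simultaneous convergence of all three covariance pieces (in particular of the mixed term $C_n$, which is only of order $\ep_n^{-1}$ and so requires the precise scaling built into Assumption \ref{ass:ker}(a)); the RKHS identification for a two-component Gaussian process would otherwise be the other subtle point, but is already handled in Remark \ref{rem:RKHS-couple}.
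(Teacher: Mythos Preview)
Your argument is correct and is exactly the route the paper intends: the paper does not give a proof for Theorem \ref{th:LDP couple-gen} beyond remarking that ``$(\ep_nB,\hat B^n)$ is a Gaussian process \dots\ and therefore the following theorem is an application of Theorem~3.4.5 in \cite{Deu-Str}''. You have simply filled in the details of that application using the paper's own Theorem \ref{th:ldp-gaussian} (which is the same abstract G\"artner--Ellis machinery), together with the references to \cite{Mac-Pac} for exponential tightness and Remark \ref{rem:RKHS-couple} for the RKHS identification, precisely as the paper signals after Assumption \ref{ass:ker}.
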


Notice that we  used the fact that $\hat B^n=\ep_n \hat B$ only in the proof of the Proposition \ref{prop:exponentially good approximations}, in particular in proving
(\ref{eq:exponentially good approximations2}) it has been used that
$$\lim_{m\to +\infty}\limsup_{n\to +\infty} \ep_n^2 \log \P\Big( \sup_{0\leq t\leq T} \ep_n\Big|\hat B_t-\hat B_{{\lfloor \frac{mt}T\rfloor}\frac Tm}\Big|>\delta\Big)=-\infty. $$
This  result is contained in Lemma 24 in \cite{Gu1}.
The same results can be deduced for exponential tight families  of Volterra  processes.

\begin{lemma} \label{lemma:approx-volterra}
If $(\hat B^n)_{n\in\N}$ is an exponentially tight family at the inverse speed $\ep_n^2$, then
for every $\delta>0$
$$\lim_{m\to +\infty}\limsup_{n\to +\infty} \ep_n^2 \log \P\Big( \sup_{0\leq t\leq T} \Big|\hat B^n_t-\hat B^n_{{\lfloor \frac{mt}T\rfloor}\frac Tm}\Big|>\delta\Big)=-\infty. $$
\end{lemma}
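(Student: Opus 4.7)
The plan is to exploit the characterization of compact subsets of $C_0[0,T]$ given by the Arzel\`a-Ascoli theorem, namely that they are uniformly equicontinuous. The exponential tightness hypothesis provides, for each level $R$, a compact set $K_R\subset C_0[0,T]$ carrying most of the mass of $\hat B^n$ on the exponential scale $\ep_n^{-2}$; inside such a compact set, the piecewise constant approximation along the dyadic-like mesh $\{kT/m\}$ converges uniformly to the identity, uniformly over $K_R$, as $m\to+\infty$. This will let us dominate the probability in the statement by $\P(\hat B^n\notin K_R)$ for all $m$ sufficiently large, which is enough.

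More concretely, I would proceed as follows. Fix $R>0$. By exponential tightness there exists a compact set $K_R\subset C_0[0,T]$ with
$$\limsup_{n\to+\infty}\ep_n^{2}\log\P(\hat B^n\notin K_R)\leq -R.$$
By Arzel\`a-Ascoli, $K_R$ is uniformly equicontinuous, so there is a nondecreasing function $\omega_R:[0,T]\to[0,+\infty)$ with $\omega_R(h)\to 0$ as $h\to 0^+$ and
$$\sup_{g\in K_R}\,\sup_{\substack{s,t\in[0,T]\\ |t-s|\leq h}}|g(t)-g(s)|\leq \omega_R(h),\qquad h\in[0,T].$$
For any $m\geq 1$ and any $t\in[0,T]$, $|t-\lfloor mt/T\rfloor T/m|\leq T/m$, hence for every $g\in K_R$
$$\sup_{0\leq t\leq T}\Big|g(t)-g\Big(\Big\lfloor \tfrac{mt}{T}\Big\rfloor\tfrac{T}{m}\Big)\Big|\leq \omega_R(T/m).$$
Now choose $m_0=m_0(R,\delta)$ so large that $\omega_R(T/m)<\delta$ for all $m\geq m_0$. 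Then for all such $m$, the event $\{\sup_{0\leq t\leq T}|\hat B^n_t-\hat B^n_{\lfloor mt/T\rfloor T/m}|>\delta\}$ forces $\hat B^n\notin K_R$, giving
$$\limsup_{n\to+\infty}\ep_n^{2}\log\P\Big(\sup_{0\leq t\leq T}\big|\hat B^n_t-\hat B^n_{\lfloor mt/T\rfloor T/m}\big|>\delta\Big)\leq -R$$
for every $m\geq m_0(R,\delta)$. Consequently
$$\limsup_{m\to+\infty}\limsup_{n\to+\infty}\ep_n^{2}\log\P\Big(\sup_{0\leq t\leq T}\big|\hat B^n_t-\hat B^n_{\lfloor mt/T\rfloor T/m}\big|>\delta\Big)\leq -R,$$
and since $R>0$ is arbitrary, the left-hand side equals $-\infty$, which is the claim.

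There is no genuine obstacle; the only point that requires a brief justification is the passage from \emph{compactness} of $K_R$ in the sup-norm to \emph{uniform} equicontinuity of its members, which is exactly the content of Arzel\`a-Ascoli and is why the piecewise-constant sampling error can be controlled uniformly over $K_R$ by a quantity $\omega_R(T/m)$ depending only on $R$ and on $m$. Note moreover that this argument never uses the Gaussian structure of $\hat B^n$ nor the specific Volterra representation: exponential tightness alone is enough, so the same reasoning extends immediately to any family of continuous processes exponentially tight at the inverse speed $\ep_n^{2}$, which is consistent with the level of generality pursued in Section~\ref{sect:VolterraGen}.
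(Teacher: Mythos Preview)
Your proof is correct and follows essentially the same route as the paper: use exponential tightness to land in a compact set $K_R$, invoke the uniform equicontinuity furnished by Arzel\`a--Ascoli to bound the mesh oscillation by $\omega_R(T/m)$, and conclude that for $m$ large the event forces $\hat B^n\notin K_R$. The paper's argument is slightly terser (it does not name the modulus $\omega_R$ explicitly) but the logic is identical.
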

\proof
From exponential tightness, for every $R>0$, there exists a compact set $K_R$ (of equi-continuous functions) such that
$\limsup_{n\to +\infty} \ep_n^2 \log \P\Big( \hat B^n\in K_R^c)\leq -R$. Therefore, for every $\delta>0$,  there exists $m_0>0$, such that for every $m>m_0$
$\limsup_{n\to +\infty} \ep_n^2 \log \P\Big( \sup_{|s-t|\leq T/m} \Big|\hat B^n_t-\hat B^n_s\Big|>\delta)\leq -R.$
Since
$$\displaylines{\lim_{m\to +\infty}\limsup_{n\to +\infty} \ep_n^2 \log \P\Big( \sup_{0\leq t\leq T} \Big|\hat B^n_t-\hat B^n_{{\lfloor \frac{mt}T\rfloor}\frac Tm}\Big|>\delta\Big)\leq
\lim_{m\to +\infty}\limsup_{n\to +\infty} \ep_n^2 \log \P\Big( \sup_{|s-t|\leq T/m} \Big|\hat B^n_t-\hat B^n_s\Big|>\delta\Big),}$$
the claim follows.\cvd

Therefore Proposition \ref{prop:exponentially good approximations} holds also in this more general case.
In the hypotheses of  Theorem \ref{th:IZ}, we have (from Theorem \ref{th:Chaganty})
that 	a large deviation principle with the speed $ \ep^{-2}_n $ and the good rate function
$$ I_Z(x)=\begin{cases}\displaystyle \inf_{(\psi,\varphi)\in \cl H_{(B,\hat B)}}\left[\frac12 \lVert (\psi,\varphi)\rVert_{\cl H_{(B,\hat B)}}^2 + \frac12\int_0^T \bigg(\frac{\dot{x}(t)-\mu(\varphi(t))-\rho \dot{\Psi}(\psi,\varphi)(t)}{\bar{\rho}\sigma(\varphi(t))}\bigg)^2\,dt \right]& x \in H_0^1[0,T] \\
		\displaystyle +\infty & x \notin H_0^1[0,T]
		\end{cases}
		$$
holds for the family $ (Z^{n}-x_0)_{n \in \mathbb{N}} $ ($ (Z_t^{n})_{t \in [0,T]} $ is defined from (\ref{eq:logprice-corr-scaled})).
If $\hat f(t)=  \int_0^t  K(t,s)\dot{f}(s)ds$, from Remark \ref{rem:LDP Volterra}, we have
$$ I_Z(x)=\begin{cases}\displaystyle \inf_{f\in H_0^1[0,T]}\left[\frac12 \lVert f\rVert_{H_0^1[0,T]}^2+\frac12\int_0^T \bigg(\frac{\dot{x}(t)-\mu(\hat{f}(t))-\rho \dot{\Psi}(f,\hat{f})(t)}{\bar{\rho}\sigma(\hat{f}(t))}\bigg)^2\,dt\right]& x \in H_0^1[0,T] \\
		\displaystyle +\infty & x \notin H_0^1[0,T]
		\end{cases}
		$$
	
	\begin{example}\label{ex:not-ss}\rm
Consider the sequence of processes $((U^{H,n}_{ t})_{t\in[0,T]})_{n\in \N}=((U^H_{\ep_n t})_{t\in[0,T]})_{n\in \N}$ where $(U^H_t)_{t\in[0,T]}$ is a fractional Ornstein-Uhlenbeck process.
It is not a self similar process, therefore $(U^H_{\ep_n t})_{t\in[0,T]}$ is not equivalent to a scaled process $(\ep_n^\alpha U^H_{ t})_{t\in[0,T]}$.
Thanks to representation (\ref{eq:FOU}) simple calculations show that, if $k_H$ is the covariance function of a fractional Brownian motion, we have
$$\displaylines{k_U(s,t)=\Cov(U^H_t,U^H_s)= \cr k_H(t,s)-a \,e^{-at} \int_0^t e^{au} k_H(s,u) du - a\,e^{-as} \int_0^s e^{av} k_H(t,v) dv + a^2e^{-a(t+s)} \int_0^t \int_0^s e^{au}e^{av} k_H(u,v) du\, dv.}$$
Therefore the  covariance function of the process $U^{H,n}$ is
$k^n_U(t,s)=k_U(\ep_n s,\ep_n t)$. It is straightforward to prove that Assumption 7.1 is verified for the infinitesimal function $\ep_n^H$ and limit covariance $k_H$.

We will obtain a  sample path  large deviation principle for the family  of  processes $ ((Z_t^{n}-x_0)_{t \in [0,T]})_{n \in \mathbb{N}}$ with the speed function $\varepsilon_n^{-2H}$
and the good rate function given by
$$ I_Z(x)=\begin{cases}\displaystyle \inf_{f\in H_0^1[0,T]}\left[\frac12 \lVert f\rVert_{H_0^1[0,T]}^2+\frac12\int_0^T \bigg(\frac{\dot{x}(t)-\mu(\hat{f}(t))-\rho \dot{\Psi}(f,\hat{f})(t)}{\bar{\rho}\sigma(\hat{f}(t))}\bigg)^2\,dt\right]& x \in H_0^1[0,T] \\
		\displaystyle +\infty & x \notin H_0^1[0,T],
		\end{cases}
		$$
where $\hat f(t)=\int_0^t K_H(t,s)\dot{f}(t)dt$ ($K_H$ is the kernel of the fractional Brownian motion defined in (\ref{eqn:kernel fbm})).
\end{example}

\section*{Acknowledgements.}
 The authors wish to thank the Referee for her/his very useful
comments which  allowed us to  improve the paper.

\end{document}